\documentclass[12pt]{article}
\usepackage{cancel}
\usepackage{tabularx}
\usepackage{tikz}
\usepackage{relsize}
mm \textheight=8.9in

\usepackage{amssymb}
\usepackage{amsmath}
\usepackage{amsthm}
\usepackage{color}

\usepackage{tikz}
\usetikzlibrary{patterns,snakes}

\usepackage{tikz-cd}

\usepackage{array,multirow}

\newcommand{\br}[3]{{$#1$}$\lower4pt\hbox{$\tp\atop\raise4pt \hbox{$\scriptscriptstyle{#2}$}$} ${$#3$}}
\newcommand{\tw}[3]{{$#1$}${\,\scriptscriptstyle {#2}}\atop\raise9pt\hbox{$\scriptstyle\tp$} ${$#3$}}
\newcommand{\ttps}[2]{{#1}\raise5pt\hbox{$\lower12pt\hbox{$\scriptstyle\tp$}\atop \lower0pt\hbox{$\tilde\;$}$}\raise4.5pt\hbox{${\scriptstyle{#2}}$}}
\newcommand{\st}[1]{\mbox{${\,\scriptscriptstyle {#1}}\atop\raise5.5pt\hbox{$*$}$}}

\newcommand{\rd}[1]{\mbox{${\,\scriptscriptstyle {#1}}\atop\raise5.5pt\hbox{$\bullet$}$}}
\newcommand{\rt}[1]{\otimes_\chi}
\newcommand{\lt}[1]{\mbox{${\,\scriptscriptstyle {#1}}\atop\raise5.5pt\hbox{$\ltimes$}$}}
\newcommand{\btr}{\raise1.2pt\hbox{$\scriptstyle\blacktriangleright$}\hspace{2pt}}
\newcommand{\btl}{\raise1.2pt\hbox{$\scriptstyle\blacktriangleleft$}\hspace{2pt}}

\newcommand{\lcr}{\raise1.0pt \hbox{${\scriptstyle\rightharpoonup}$}}
\newcommand{\rcr}{\raise1.0pt \hbox{${\scriptstyle\leftharpoonup}$}}

\newcommand{\ttp}{{\lower12pt\hbox{$\tp$}\atop \hbox{$\tilde\;$}}}

\newcommand{\id}{\mathrm{id}}

\newcommand{\Ic}{\mathcal{I}}
\newcommand{\Xc}{\mathcal{X}}

\newcommand{\Ag}{\mathfrak{A}}
\newcommand{\Dg}{\mathfrak{D}}
\newcommand{\Bg}{\mathfrak{B}}

\newcommand{\Cg}{\mathfrak{C}}
\newcommand{\Sg}{\mathfrak{S}}

\newcommand{\Ng}{\mathfrak{N}}

\renewcommand{\S}{\mathcal{S}}

\newcommand{\Cc}{\mathcal{C}}

\newcommand{\Nc}{\mathcal{N}}

\newcommand{\C}{\mathbb{C}}
\newcommand{\Z}{\mathbb{Z}}

\newcommand{\N}{\mathbb{N}}

\newcommand{\tp}{\otimes}

\newcommand{\zt}{\zeta}

\newcommand{\V}{V}

\newcommand{\ve}{\varepsilon}
\newcommand{\gm}{\gamma}
\newcommand{\dt}{\delta}

\newcommand{\op}{\oplus}
\newcommand{\la}{\lambda}

\newcommand{\End}{\mathrm{End}}

\newcommand{\Span}{\mathrm{Span}}
\newcommand{\Aut}{\mathrm{Aut}}

\newcommand{\rk}{\mathrm{rk}}

\newcommand{\Rm}{\mathrm{R}}

\newcommand{\ad}{\mathrm{ad}}

\newcommand{\La}{\Lambda}

\newcommand{\mg}{\mathfrak{m}}
\newcommand{\g}{\mathfrak{g}}
\renewcommand{\d}{\mathfrak{d}}
\renewcommand{\b}{\mathfrak{b}}
\renewcommand{\k}{\mathfrak{k}}

\newcommand{\h}{\mathfrak{h}}

\newcommand{\mb}{\boldsymbol{m}}

\newcommand{\s}{\mathfrak{s}}

\renewcommand{\o}{\mathfrak{o}}

\newcommand{\m}{\mathbf{m}}
\newcommand{\eps}{\epsilon}

\newcommand{\nn}{\nonumber}
\newcommand{\p}{\mathfrak{p}}
\renewcommand{\l}{\mathfrak{l}}
\renewcommand{\c}{\mathfrak{c}}

\newcommand{\si}{\sigma}
\newcommand{\al}{\alpha}
\renewcommand{\t}{\mathfrak{t}}
\newcommand{\bt}{\beta}

\newcommand{\be}{\begin{eqnarray}}
\newcommand{\ee}{\end{eqnarray}}

\newtheorem{thm}{Theorem}[section]
\newtheorem{propn}[thm]{Proposition}
\newtheorem{lemma}[thm]{Lemma}

\newtheorem{definition}[thm]{Definition}

\newcount\prg

\newcommand{\parag}{\advance\prg by1 {\noindent\bf\thesection.\the\prg\hspace{6pt}}}

\begin{document}

\title{Affine supersymmetric pairs}
\author{ A. Mudrov, V. Stukopin
 \vspace{10pt}\\
\small
Moscow Institute of Physics and Technology,\\
\small
9 Institutskiy per., Dolgoprudny, Moscow Region,
141701, Russia,
\vspace{10pt}\\
\small
 e-mail:  mudrov.ai@mipt.ru, stukopin.va@mipt.ru
\vspace{10pt}
\\
}

\maketitle

\begin{abstract}
We classify Satake diagrams for general linear and orthosymplectic non-twisted affine  Lie superalgebras
and prove that each of them generates a family of proper spherical subalgebras. Furthermore, we demonstrate that
every  such family contains  subalgebras with  matrix invariants, which are viewed
as  classical analogs of K-matrices solving supersymmetric Reflection equation.
 \end{abstract}

{\small \underline{2010 AMS Subject Classification}: 17B37,17A70}.
\\

{\small \underline{Key words}:  affine Lie superalgebras, super-symmetric pairs, spherical subalgebras, Reflection equation}

\newpage

\tableofcontents

\newpage
\section{Introduction}
This research is a sequel of our previous papers   \cite{AMS1, AMS2} devoted to spherical pairs of Lie superalgebras.
In this exposition we deal with a non-twisted affine   Lie superalgebra $\hat \g$ extending general linear or orthosymplectic $\g$.
Super-symmetric generalization of  symmetric pairs is a relatively new topic while their even predecessors became a well established
area both in  classical geometry and quantum algebra \cite{VK, Hel,Let,K,BK}. Given a significance of super-symmetry in quantum physics, such a generalization
should be of interest and importance. Already first results indicate that the super-symmetric extension of the conventional
theory of symmetric pairs promises to be very reach and displays non-trivial effects \cite{AMS1,AMS2,Shen,ShenWang}. One  reason for that is a diversity of non-isomorphic
Borel subalgebras in a given generic Lie superalgebra. Another one is a softer selection protocol  discarding trivial Satake diagrams with no proper subalgebras \cite{AMS1}.

As quantum groups grew up from the theory of Yang-Baxter equations \cite{FRT,D1},  quantum symmetric pairs originated  from the theory
of Reflection equation \cite{Cher,  KSkl,KSS} as they put its solutions into a conceptual algebraic context, \cite{NS,NDS}. This way the  classical symmetric pairs associated with
Riemann symmetric spaces and described by Satake diagrams were further generalized for the finite dimensional and many cases of infinite dimensional Lie algebras \cite{K,RV,AV}. This makes symmetric pairs a vibrant area of quantum algebra with non-commutative geometric flavour and applications to mathematical physics.

Super-symmetric extension of the even theory seems very natural because supersymmetry is a corner stone of quantum physics \cite{Dir,Ma}.
There is a growing interest to a supersymmetric version of spherical manifolds \cite{Sh,Sh1}.
Numerous results have been obtained on solutions to  $\Z_2$-graded Reflection equation including in its spectral parameter dependant form \cite{AACLFR, AACLFR1, L, DK, AlgMS1}.
However a $\Z_2$-graded extension of  symmetric pairs had not been attempted until recently. Without such a theory, the supersymmetric
Reflection equation lacks a systematic approach. The main obstruction had been the absence  of the longest Weyl group element with required properties.

Let us point out  a few studies undertaken in super-symmetric pairs. Shen and Wang addressed basic finite dimensional Lie superalgebras, described a class
of Satake diagrams and constructed their quantization as coideal subalgebras in \cite{Shen,ShenWang}. They restricted only to even Levi subalgebras, for which the longest Weyl group element is readily available.
Our first paper \cite{AMS1} introduced super-Satake diagrams for a fixed polarization of the basic matrix Lie superalgebras.
It is the simplest polarization that admits  a straightforward generalization of the longest Weyl group element,
which allows one  to extend the construction beyond even Levi subalgebras. A full description of  $\Z_2$-graded Satake diagrams
for an arbitrary polarization in general linear and orthosymplectic  $\g$ is obtained in \cite{AMS2}.

In this paper, we make a further step in that direction and classify all supersymmetric pairs in the three basic affine matrix Lie superalgebras: general linear and orthosymplectic of even and odd types. We are motivated by the importance of these algebras for the theory of integrable models.
Our study is restricted to the non-twisted case. We consider all possible polarizations of the underlying finite dimensional superalgebra, which
are determined by a choice of grading on their natural module.

Our approach is based on the theory of decorated Dynkin diagrams (DDD) first developed for finite dimensional Lie superalgebras with  a special
"minimal symmetric" polarization in \cite{AMS1} and extended to all polarizations in \cite{AMS2}. A key ingredient of this approach
is a generalization of the longest Weyl group element that is present in contragredient Lie algebras and is lacked in their $\Z_2$-graded counterparts.
We call such an element Weyl operator for brevity. It can be defined at least for basic matrix  Lie superalgebras. The affine case does not much differ
in this regard
from finite dimensional because such an element is needed only for finite dimensional Levi subalgebras $\l\subset \hat \g$.
The Weyl operator is an automorphism of the root system and weight lattice and in some cases it preserves the basis of simple roots. Then
it is an involutive isometry of the dual to the Cartan subalgebra. For "regular" Levi subalgebras, it preserves the root system and weight lattice
of the total algebra and can be used for formalization of spherical conditions. It turns out that  "irregular" Levi subalgebras
do not give rise to non-trivial spherical pairs.

Along with the subalgebra $\l$, a spherical datum includes an involutive autormorphism $\tau$ of the root system that extends the negative Weyl operator of $\l$.
It is an isometry  if and only if such is the latter. Then it preserves the total root basis.
A spherical datum defines a family of subalgebras $\k\subset \hat \g$ generated by $\l$ and a family of elements comprising positive and negative root vectors
with a non-zero complex number (mixture weight), along with $\tau$-negative elements of the Cartan subalgebra. The mixed generators may involve also elements of the Cartan subalgebra centralizing $\l$, but we drop them for the sake of simplicity.
A key question that arises from this description is when the subalgebra $\k$ is proper.
A set of selection rules has been worked out in \cite{AMS1} for special polarizations, which proves to be enough to filter out  DDD producing no proper  subalgebras, in all possible polarizations.  These selection rules also work for affine DDD, as demonstrated in this paper.

A direct application of selection rules is quite elaborate especially for orthosymplectic $\g$. It is possible to reformulate the description of finite dimensional admissible Satake diagrams obtained in  \cite{AMS2}
as  an inductive algorithm that allows one to get rid of selection rules completely making them implicit. This is also convenient for the study of affine Satake diagrams.

In order to find out if a given subalgebra is proper, one may search for its invariant in some representation that is not fixed by the total algebra.
This approach has another advantage because matrix invariants play a key role in the quantum version of the theory where
$U(\k)$ is  guantized to a coideal subalgebra in the total quantum (super)group. Quantum invariants satisfying the  Reflection equation
are called K-matrices. They are crucial  for integrability of quantum systems with non-periodical boundary conditions, such
as quantum spin chains \cite{Skl} and quantum field theories \cite{S,RSV}.

The simplest representation of $\hat \g$ where to search for a K-matrix  is the basic evaluation module $V_z$  and its tensor products.
A vast majority of Satake diagrams generate subalgebras $\k$ with invariants in $\End(V_z)$. These affine invariants restrict to those of their  finite dimensional subalgebras. We emphasize that
finding all their invariants is not our goal and confine ourselves with  model ones which we call standard. Some diagrams, especially of small rank, have more invariants which
nevertheless do not survive further extension to bigger diagrams; they are not interesting  in the current context.

 Subalgebras with $\l=\g$ have "essentially affine" tensor  invariants in $\End(V_z\tp V_{-z})$.
There is also a small (twelve) family of  "waif" diagrams in Sections \ref{Sec_Waif}
whose finite dimensional sub-invariants cannot be globalized in $\End(V_z)$, at all $z$.
Yet we prove that they generate proper  subalgebras $\k\subset \hat \g$.
Removing a set of  white roots, we obtain a finite dimensional sub-diagram with a subalgebra $\k'$ that has a nilpotent invariant  $\Nc\in \End(V_z)$, $\Nc^2=0$.
For a special value of the loop parameter $z$, the image $\k_z\in \End(V_z)$ of $\k$
is in $\ker (\ad \Nc)^2$ while $\hat \g$ is not.

In order to simplify our study, we restrict ourselves with  subalgebras whose mixed generators involve only simple root vectors.
As discussed above, not all such subalgebras admit invariant matrices in the evaluation representation even if their image is proper in $\g$.
In view of the  importance of  K-matrices in quantum algebra, we lift this restriction  in Section \ref{Sec_Waif} and
include an element of the Cartan subalgebra  in the affine mixed generator. Such an extension allows for an invariant for the twelve waif diagrams.
Thus we demonstrate that all Satake diagrams are not just non-trivial but allow for a matrix invariant for some of their spherical subalgebras.
This  facilitates their further studies  regard to the Reflection equation.

To compare the list of affine super-Satake diagrams with the conventional even theory, the reader is referred to \cite{RV,AV}.
However, like our previous paper \cite{AMS2}, this study deals with only classical pairs $(\hat \g,\k)$.
We consider quantization  a separate problem deserving a specially devoted study. That concerns all aspects of the theory including
construction of  quantum K-matrices.

The paper is organized as follows. We did not write out a separate list of diagrams because of its  tremendous spread  with a diversity of polarizations. We count it
more  comprehensible to address them case by case along with their "classical K-matrices" in specially designed sections.
In the next section we give a description of basic matrix affine superalgebras in an arbitrary polarization, which
is suitable for our purposes. We present simple root vectors of the underlying orthosymplectic Lie superalgebras preserving a given non-degenerate skew-diagonal bilinear form
of arbitrary signature
and list possible even shapes of their Dynkin diagrams. In  Section \ref{Sec_DDD} we modify the theory of finite dimensional DDD adopting it to the affine case
and remind selection rules. Therein we introduce a category of Satake sub-diagrams of a given diagram, which will help us to do our analysis.
 Section \ref{Sec_Fin-Dim} revises the finite dimensional Satake diagrams from \cite{AMS2} to make the presentation free of selection rules.
The last three sections are devoted to description of affine Satake diagrams and to a proof of their non-triviality.
First we do the general linear case in Section \ref{Sec_GL}. Orthosymplectic Satake diagrams fall into two classes: flipped left and right tails and stable tails.
They are processed in Sections \ref{Sec_OSP-Tails flipped} and \ref{Sec_OSP-id}, respectively.

\section{Affine matrix Lie superalgebras}
\label{Sec_Affine_basic}
For an exposition of the Lie superalgebra theory the reader is referred to  Musson's textbook  \cite{Mus}.
In this paper, $\g$ is  either general linear or ortho-symplectic matrix Lie superalgebra of rank $n$.
It is defined through its natural representation on a graded vector space $V$ of dimension $N$.
The algebra $\g$ admits a triangular decomposition  $\g=\g_-\oplus \h\oplus \g_+$ with respect to the Cartan subalgebra $\h$ represented by diagonal matrices.
The subalgebras $\g_-$ and $\g_+$ are represented by strictly lower and upper triangular matrices, respectively. Their class of isomorphism
is determined by the grading of $V$.

Let  $\{v_i\}_{i=1}^N\subset V$ be a basis of homogeneous vectors, where $N=n+1$ for general linear $\g$ and $N=2n$ or $N=2n+1$ for orthosymplectic $\g$.
The vectors  $v_i$ carry  weights $\zt_i\in \h^*$ subject to  $\zt_{i'}=-\zt_i$ for orthosymplectic $\g$, where $i'=N+1-i$.
For  ortho-symplectic $\g$ of odd $N$, the weight $\zt_{\frac{N+1}{2}}$ is zero.
The weights $\{\zt_i\}_{i=1}^N$ generate a  lattice $\La=\La_\g$, which contains the root system $\Rm$ of $\g$.

Consider  a   matrix $C\in \End(V)$ defined by
\be
\label{C}
C=\sum_{k=1}^n (e_{k,k'}+\eps_{k} e_{k',k})+e_{\frac{N+1}{2},\frac{N+1}{2}},
\ee
where the last term is present only if $N$ is odd. The quantities  $\eps_{k}$ take values in $\{\pm1\}$ and determine the type of $\g$.
We define a $\Z_2$-grading on $V$ subject to $\deg(v_{i'})=\deg(v_{i})$ and $\deg(v_{\frac{N+1}{2}})=0$ for odd $N$.
The relation with $C$ is given by the constraint $\eps_i\eps_j=(-1)^{\bar i +\bar j}$ for all $i,j=1, \ldots, n$,  where $\bar i=\deg(v_i)$.
Thus the matrix $C$  is even as an element of the graded algebra $\End(V)$

We denote by $t$ the super-transposition  acting on matrices by the assingment
$A_{ij}^t=(-1)^{\bar i(\bar i+\bar j) }A_{ji}$, $A\in \End(V)$. It is a  super-involutive anti-automorphism
of the graded associative algebra  $\End(V)$.
The orthosymplectic Lie algebra $\g$ is defined as a set of endomorphisms in  $\End(V)$ preserving the matrix $C$:
$$
A C+C A^t =0, \quad A \in \g.
$$
Thus $\g\simeq \o\s\p(2\mb|N-2\mb)$, where $\mb$ equals the number of minuses in  $\{\eps_i\}_{i=1}^n$.
Varying the signature of $C$ subject to this condition we obtain all possible polarizations of $\g$.

For general linear superalgebras, we consider all possible gradings on $V$. With the number $\mb$ of odd basis vectors in $V$ fixed,
we obtain all polarizations of $\g\simeq \g\l(\mb|N-\mb)$.

An inner product on $\h^*$ is introduced by setting it on the basic weights $\zt_i$:
$$
(\zt_i,\zt_j)=\dt_{ij}(-1)^{\bar i}, \quad i=1,\ldots, n.
$$
The basis of simple positive roots in $\g$ is expressed through the basic weights by
$$
     \Pi_{\g\l}=\{\zt_i-\zt_{i+1}\}_{i=1}^{N-1}, \quad
     \Pi_{\s\p\o}=\{\zt_i-\zt_{i+1}\}_{i=1}^{n-1}\cup \{2\zt_n\},
$$
$$
    \Pi_{\o\s\p}=\{\zt_i-\zt_{i+1}\}_{i=1}^{n-1}\cup\{\zt_n\}, \> \mbox{for odd}\>  N,  \quad
     \Pi_{\o\s\p}=\{\zt_i-\zt_{i+1}\}_{i=1}^{n-2}\cup\{\zt_{n-1}\pm \zt_n\},       \> \mbox{for even}\>N.
$$
The set $\Pi$ is a basis in $\h^*=\Span(\zt_i)_{i=1}^n$ if $\g$ is orthosymplectic. For general linear $\g$, the
vector space $\h^*$ is spanned by $\Pi$ and any weight $\zt_i$.
The set $\Pi$ generates the root system  $\Rm$ with the subset  $\Rm^+$ of positive roots.

The Lie superalgebra  $\g$ is  contragredient \cite{Mus} and  generated by $f_{\al}\in \g_-$, $e_\al\in \g_+$, $\al \in \Pi$, which
satisfy relations
$$
[e_\al,f_\al]=h_\al, \quad [h_\al,e_\bt]=(\al,\bt) e_\bt, \quad [h_\al,e_\bt]=-(\al,\bt) e_\bt.
$$
There are also additional relations,  depending on a polarization. Their
particular form is not important for this exposition.

We will be flexible with the choice of root vectors $e_\al,f_\al$ throughout the text and normalize them in a way
that is convenient for a particular task. That will be explicitly stated where needed.

The maximal positive  root $\xi$ of general linear $\g$ is $\zt_1-\zt_{n+1}$. For orthosymplectic it depends on the sign $\eps_1$:
$$
\xi=\zt_1+\zt_2,\quad \eps_1=1,\quad \xi=2\zt_1,\quad \eps_1=-1.
$$
The affine basis $\hat \Pi$ of simple roots is obtained by appending $\al_0$ to  $\Pi$. It carries the parity of $\xi$ and
its inner products with elements of $\Pi$ are
$$
(\al_0, \al_0)=(\al_\xi, \al_\xi), \quad (\al_0,\al)=-(\xi,\al), \quad \al \in \Pi.
$$
The affine algebra $\hat \g$ is a contragredient algebra with a symmetrized Cartan matrix. It is generated by $f_\al, e_\al$ with $\al\in \hat \Pi$ which satisfy
$$
[e_\al,f_\al]=h_\al, \quad [h_\al,e_\bt]=(\al,\bt) e_\bt, \quad [h_\al,f_\bt]=-(\al,\bt) f_\bt,
$$
with additional relations, which will not be used in this exposition. The Cartan subalgebra is spanned by $\h$ and $h_{\al_0}$, which is transversal to $\h$.
The inner product is degenerate on $\hat \h^*$ with one dimensional radical
 $\mathfrak{N}\subset \h^*$ spanned by $\dt=\al_0+\xi$.
We extend the assignment $\al\mapsto h_\al$ on roots to a linear map $\hat \h^*\to \h$ sending  $\la\mapsto h_\la\in \hat \h$.
It satisfies the identity $\mu(h_\la)=(\mu,\la)$ for all $\la, \mu\in \hat \h^*$.

It is known that $\hat \g$ is  isomorphic to a central extension of the loop algebra $L\g=\g\tp [t,t^{-1}]$.

\subsection{Basic evaluation representation}
\label{Sec_Basic Rep}
Recall that $\hat\g$ has a family of  representations on any finite dimensional $\g$-module $(V,\rho)$ parameterized by a non-zero complex number
$z$.
The one dimensional center of $\hat \g$ is in the kernel, thus it factors through a representation of the loop algebra $L\g$
which is identical on $\g$ and assigns $t\mapsto z\in \C^\times$.
The affine Chevalley generators
are represented by
$$
e_0\mapsto z f_\xi, \quad f_0\mapsto \frac{1}{z}e_\xi.
$$
 Now suppose that $V$ is the   natural representation of $\g$.
For a subalgebra  $\k\subset \hat \g$,  its image in $\End(V_z)$ is denoted by $\k_z$.
The pullback of this representation takes  $\h^*$ to a subspace in $\hat \h^*$  transversal to $\Ng$. This pullback is independent of $z$
and makes $\g$-weights into $\hat \g$-weights as well.

For further convenience, we write down   root vectors of  orthosymplectic $\g\subset \End(V)$ preserving $C$ (\ref{C}), with respect to the natural
triangular decomposition:
\be
e_{\zt_k-\zt_m}=-(-1)^{\bar m(\bar k+\bar m)} e_{k,m}+ e_{m',k'}, \quad e_{\zt_k+\zt_m}=-\eps_k (-1)^{\bar k(\bar k+\bar m)} e_{k,m'}+e_{m,k'},
\label{roots-e}\\
f_{\zt_k-\zt_m}=-(-1)^{\bar k(\bar k+\bar m)} e_{m,k}+ e_{k',m'}, \quad f_{\zt_k+\zt_m}=-\eps_{m}(-1)^{\bar k(\bar k+\bar m)} e_{m',k}+e_{k',m},
\label{roots-f}
\ee
for  $1\leqslant k<m\leqslant n$,
and
$$
 e_{2\zt_k}=e_{k,k'},  \quad  f_{2\zt_k}=e_{k',k}, \quad \eps_k=-1, \quad 1\leqslant k\leqslant n,
$$
for even $N=2n$ and
$$
e_{\zt_k}=- e_{k,n+1}+e_{n+1,k'}, \quad f_{\zt_k}=-(-1)^{\bar k} e_{n+1,k}+e_{k',n+1}, \quad \eps_k=(-1)^{\bar k}, \quad 1\leqslant k\leqslant n,
$$
for odd $N=2n+1$.
\subsection{Dynkin diagrams}
A conventional way of graphical presentation for a root basis of a contragredient Lie superalgebra makes use
of colour differentiation between even and odd roots. Even roots are pained white while odd roots  grey if isotropic or
black. As the purpose of this exposition is a description of Satake diagrams which also uses a black-and-white designation,
we get rid of the conventional method and distinguish odd roots by squared nodes. This will not cause a confusion because the only non-isotropic
odd root may be the short one
in  orthosymplectic $\g\in \End(\C^{2n+1})$.

Discarding the parity of nodes gives an even diagram  (the double link connecting two isotropic nodes
in the even orthosymplectic case is removed). We call it shape of the graded Dynkin diagram.
Below we list the shapes of affine Dynkin diagrams up to an isomorphism. The affine root
of orthosymplectic $\g$ is placed on the left.

\noindent
$\g=\g\l(N)$
$$
     \hat \Pi_{ \Ag}=\{\zt_N-\zt_1\}\cup\{\zt_i-\zt_{i+1}\}_{i=1}^{N-1},
\quad
\mathrm{Aut}(\hat \Pi_{ \Ag})=\Z_{n+1}\rtimes \Z_2
$$

\begin{center}
\begin{picture}(330,50)
\put(1.5,10.5){\line(5,1){162}}\put(328.5,10.5){\line(-5,1){162}}
\put(165,43){\circle{3}}
\put(0,10){\circle{3}}
\put(30,10){\circle{3}}

\put(80,10){\circle{3}}
\put(110,10){\circle{3}}
\put(140,10){\circle{3}}

\put(82,10){\line(1,0){26}}
\put(112,10){\line(1,0){26}}

\put(142,10){\line(1,0){10}}
\put(178,10){\line(1,0){10}}
\put(160,10){$\ldots$}
\put(190,10){\circle{3}}
\put(220,10){\circle{3}}
\put(250,10){\circle{3}}

\put(192,10){\line(1,0){26}}
\put(222,10){\line(1,0){26}}

\put(302,10){\line(1,0){26}}

\put(300,10){\circle{3}}
\put(330,10){\circle{3}}

\put(252,10){\line(1,0){10}}
\put(288,10){\line(1,0){10}}
\put(270,10){$\ldots$}

\put(2,10){\line(1,0){26}}
\put(32,10){\line(1,0){10}}
\put(68,10){\line(1,0){10}}
\put(50,10){$\ldots$}

\put(162, 50){$\scriptstyle \al_0$}

 \end{picture}
\end{center}
$\g=\o\s\p(N)$, $N= 1\mod 2$
$$
   \hat  \Pi_{\Cg-\Bg}=\{-2\zt_1\}\cup\{\zt_i-\zt_{i+1}\}_{i=1}^{n-1}\cup\{\zt_n\},
\quad
\mathrm{Aut}(\hat  \Pi_{\Cg-\Bg})=1
$$
\begin{center}
\begin{picture}(200,30)
\put(-32, 18){$\scriptstyle \al_0$}
\put(0,10){\circle{3}}
\put(30,10){\circle{3}}

\put(80,10){\circle{3}}
\put(110,10){\circle{3}}
\put(140,10){\circle{3}}

\put(82,10){\line(1,0){26}}
\put(112,10){\line(1,0){26}}

\put(142,10){\line(1,0){10}}
\put(178,10){\line(1,0){10}}
\put(160,10){$\ldots$}
\put(190,10){\circle{3}}

\put(2,10){\line(1,0){26}}
\put(32,10){\line(1,0){10}}
\put(68,10){\line(1,0){10}}
\put(50,10){$\ldots$}

\put(220,10){\circle{3}}

\put(-30,10){\circle{3}}
\put(-29,8.5){\line(1,0){24}}
\put(-29,11.5){\line(1,0){24}}
\put(-9,7){$>$}

\put(191,8.5){\line(1,0){24}}
\put(191,11.5){\line(1,0){24}}
\put(211,7){$>$}
 \end{picture}
\end{center}
$$
    \hat \Pi_{\Dg-\Bg}=\{-\zt_1-\zt_2\}\cup\{\zt_i-\zt_{i+1}\}_{i=1}^{n-1}\cup\{\zt_n\},
\quad
\mathrm{Aut}(\hat  \Pi_{\Dg-\Bg})=\Z_2
$$
\begin{center}
\begin{picture}(200,30)
\put(-40, 28){$\scriptstyle \al_0$}
\put(0,10){\circle{3}}
\put(30,10){\circle{3}}

\put(80,10){\circle{3}}
\put(110,10){\circle{3}}
\put(140,10){\circle{3}}

\put(82,10){\line(1,0){26}}
\put(112,10){\line(1,0){26}}

\put(142,10){\line(1,0){10}}
\put(178,10){\line(1,0){10}}
\put(160,10){$\ldots$}
\put(190,10){\circle{3}}

\put(2,10){\line(1,0){26}}
\put(32,10){\line(1,0){10}}
\put(68,10){\line(1,0){10}}
\put(50,10){$\ldots$}

\put(191,8.5){\line(1,0){24}}
\put(191,11.5){\line(1,0){24}}
\put(211,7){$>$}
\put(220,10){\circle{3}}

\put(-1,11.5){\line(-3,2){25}}
\put(-1,8.5){\line(-3,-2){25}}
\put(-27,29){\circle{3}}
\put(-27,-9){\circle{3}}
\end{picture}
\end{center}

\noindent
$\g=\o\s\p(N)$, $N= 0\mod 2$
$$
\hat \Pi_{\Dg-\Dg}=\{-\zt_1-\zt_{2}\} \cup\{\zt_i-\zt_{i+1}\}_{i=1}^{n-2}\cup\{\zt_{n-1}\pm \zt_n\},
\quad
\mathrm{Aut}(\hat  \Pi_{\Dg-\Dg})=\Z_2\times \Z_2\times \Z_2
$$
\begin{center}
\begin{picture}(200,30)
\put(-40, 28){$\scriptstyle \al_0$}
\put(0,10){\circle{3}}
\put(30,10){\circle{3}}

\put(80,10){\circle{3}}
\put(110,10){\circle{3}}
\put(140,10){\circle{3}}

\put(82,10){\line(1,0){26}}
\put(112,10){\line(1,0){26}}

\put(142,10){\line(1,0){10}}
\put(178,10){\line(1,0){10}}
\put(160,10){$\ldots$}
\put(190,10){\circle{3}}

\put(2,10){\line(1,0){26}}
\put(32,10){\line(1,0){10}}
\put(68,10){\line(1,0){10}}
\put(50,10){$\ldots$}

\put(191.5,11.5){\line(3,2){25}}
\put(191.5,8.5){\line(3,-2){25}}
\put(217.5,29){\circle{3}}
\put(217.5,-9){\circle{3}}
\put(-1,11.5){\line(-3,2){25}}
\put(-1,8.5){\line(-3,-2){25}}
\put(-27,29){\circle{3}}
\put(-27,-9){\circle{3}}

\end{picture}
\end{center}
$$
\hat \Pi_{\Cg-\Dg}=\{-2\zt_1\} \cup\{\zt_i-\zt_{i+1}\}_{i=1}^{n-2}\cup\{\zt_{n-1}\pm \zt_n\},
\quad
 \mathrm{Aut}(\hat  \Pi_{\Cg-\Dg})=\Z_2
$$
\begin{center}
\begin{picture}(200,30)
\put(-32, 18){$\scriptstyle \al_0$}
\put(0,10){\circle{3}}
\put(30,10){\circle{3}}

\put(80,10){\circle{3}}
\put(110,10){\circle{3}}
\put(140,10){\circle{3}}

\put(82,10){\line(1,0){26}}
\put(112,10){\line(1,0){26}}

\put(142,10){\line(1,0){10}}
\put(178,10){\line(1,0){10}}
\put(160,10){$\ldots$}
\put(190,10){\circle{3}}

\put(2,10){\line(1,0){26}}
\put(32,10){\line(1,0){10}}
\put(68,10){\line(1,0){10}}
\put(50,10){$\ldots$}

\put(191.5,11.5){\line(3,2){25}}
\put(191.5,8.5){\line(3,-2){25}}
\put(217.5,29){\circle{3}}
\put(217.5,-9){\circle{3}}

\put(-30,10){\circle{3}}
\put(-29,8.5){\line(1,0){24}}
\put(-29,11.5){\line(1,0){24}}
\put(-9,7){$>$}

\end{picture}
\end{center}

\begin{center}
$$
\hat \Pi_{\Cg-\Cg}=\{-2\zt_1\} \cup\{\zt_i-\zt_{i+1}\}_{i=1}^{n-2}\cup\{2 \zt_n\},
\quad
\mathrm{Aut}(\hat  \Pi_{\Cg-\Cg})=\Z_2
$$
\begin{picture}(200,30)
\put(-32, 18){$\scriptstyle \al_0$}
\put(0,10){\circle{3}}
\put(30,10){\circle{3}}

\put(80,10){\circle{3}}

\put(110,10){\circle{3}}
\put(140,10){\circle{3}}

\put(82,10){\line(1,0){26}}
\put(112,10){\line(1,0){26}}

\put(142,10){\line(1,0){10}}
\put(178,10){\line(1,0){10}}
\put(160,10){$\ldots$}
\put(190,10){\circle{3}}

\put(2,10){\line(1,0){26}}
\put(32,10){\line(1,0){10}}
\put(68,10){\line(1,0){10}}
\put(50,10){$\ldots$}

\put(220,10){\circle{3}}

\put(-30,10){\circle{3}}
\put(-29,8.5){\line(1,0){24}}
\put(-29,11.5){\line(1,0){24}}
\put(-9,7){$>$}

\put(194,8.5){\line(1,0){24.5}}
\put(194,11.5){\line(1,0){24.5}}
\put(190,7){$<$}
 \end{picture}
\end{center}
The roots are enumerated from left to right, with $\al_n$ put above $\al_{n-1}$ for the right shape $\Dg$.
Adding the parity information does not change the links between the nodes apart from the odd tail roots of shape $\Dg$ which
acquire a double link.

Note that  diagrams are characterizing triangular decompositions  of $\hat \g$. As different Borel subalgebras may be
non-isomorphic, different diagrams can correspond to the same  $\hat \g$.

Removing the node $\al_0$ leaves a Dynkin diagram of $\g$ of four types: $\Ag,\Bg,\Cg,\Dg$.
We distinguish two subalgebras $\s$ and $\t$ in $\g=\s+\t$ calling them shaft and tail respectively:
$\Pi_\t=\varnothing$ (zero $\t$) for $\Ag$, $\Pi_\t=\{\al_n\}$ for $\Bg,\Cg$ (short and long roots, respectively) and $\Pi_\t=\{\al_{n-1},\al_n\}$ for $\Dg$.
Clearly $\s$ is a general linear Lie superalgebra.
Thus the finite dimensional diagram $D_\g$ is the union $D_\g=D_\s\cup D_\t$.

Affine $\hat \g$ has right and left tails $\t^r$, $\t^l$ defined similarly. This breaks down the total diagram accordingly,
$D_{\hat \g}=D_{\t^l}\cup D_\s\cup D_{\t^r}$, where the shaft $D_\s$ is bridging the tails together.

For a diagram with right $\Dg$-tail, the group homomorphism $\Aut(D_{t^r})\to \End(\k_z)$ is generated by the flip $v_n\leftrightarrow v_{n'}$.
For a diagram with left  $\Dg$-tail, the group homomorphism $\Aut(D_{t^l})\to \End(\k_z)$ is generated by the flip
$$
v_1\mapsto -(-1)^{\bar 2(\bar 1+\bar 2)}z v_{1'}, \quad
v_{1'}\mapsto -(-1)^{\bar 2(\bar 1+\bar 2)}\frac{1}{z}v_1.
$$
The exact form of other automorphisms will not be  of use in this exposition.
\section{Spherical subalgebras and Satake diagrams}
\label{Sec_DDD}
 Let $\g$ be a contragredient Lie superalgebra that  with a Cartan subalgebra $\h$  and  $\b\subset \g$ a Borel subalgebra
 containing $\h$.
\begin{definition}
A Lie superalgebra $\k\subset \g$ is called spherical if $\g=\k+\b$.
Then the pair $(\g,\k)$ is called spherical.
\end{definition}
\noindent
Contrary to the non-graded case, this definition of sphericity substantially depends
upon a choice of $\b$.
In this section we present a  theory of spherical subalgebras that are quantizable to coideal subalgebras in quantum supergroups
along the lines of \cite{Let}. We adopt the results  of \cite{AMS1,AMS2} on finite dimensional Lie superalgebras to their affine extension.
\subsection{Weyl operator and spherical data }
\label{SecWOp}
In the classical theory of symmetric pairs of Lie algebras, the longest element of the Weyl group plays a crucial role.
The Weyl group of Lie superalgebras generated by even simple  reflections is too small to accommodate an element with required properties.
Still there is a substitute for it with a special automorphism of the root system that works at least for  basic non-exceptional Lie superalgebras.
Such an automorphism allows one to bypass the aforementioned obstruction and construct spherical subalgebras in general linear and orthosymplectic  $\g$.  That  can be done also for affine  $\hat \g$, because
the automorphism in question is needed only for its finite dimensional subalgebras.
 The only new thing to settle  is  its extension to the root system of  $\hat \g$.
We implement this programme and then proceed to construction of affine spherical pairs in this section.

Let $\g$ be a basic matrix Lie supergroup and $V$ its natural representation of dimension $N$.
\begin{definition}
\label{Weyl-operator}
 A unique $\Z$-linear map $w_\g\colon \La\mapsto \La$ defined by the assignment
$\zt_i\mapsto \zt_{i'}$, $i=1,\ldots, N,$ is called
Weyl operator.
\end{definition}
\noindent
The endomorphism $w_\g$ is involutive and relates the highest and lowest weights of $V$ by construction.
For orthosymplectic $\g$, it preserves the parity of weights.
The situation is more complicated for general linear $\g$. The highest and lowest weights of $V$ have the same parity
if and only if the number of odd roots in $\Pi_\g$ is even. We call the corresponding  polarization of $\g$ even, otherwise we call it odd.
The grading of $V$ is called symmetric once $\bar i=\bar{i'}$ for all $i=1,\ldots, N$. It is so if and only if $w_\g$ even (preserves parity
of weights).
In that case we call polarization of $\g$ symmetric. A symmetric polarization of general linear $\g$ is always even.

 \begin{lemma}
\label{transposition}
The  operator  $w_\g$  preserves  the root system $\Rm$. Furthermore,
$w_\g(\Pi)=-\Pi$.
\end{lemma}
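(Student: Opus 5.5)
The plan is a direct computation in the basis of weights $\zt_i$. I will use the description of the root systems through these weights and the relations $\zt_{i'}=-\zt_i$ (orthosymplectic case) and $\zt_{(N+1)/2}=0$ for odd $N$. Parity plays no role in the statement: the claim concerns only the sets $\Rm$ and $\Pi$, not parities or the inner product. So the argument depends only on the shape ($\Ag,\Bg,\Cg,\Dg$).

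First, the general linear case. Here $w_\g$ maps $\zt_i\mapsto\zt_{N+1-i}$. Every root has the form $\zt_i-\zt_j$ with $i\ne j$, and it goes to $\zt_{i'}-\zt_{j'}$, which is again a root. For a simple root,
$$
w_\g(\zt_i-\zt_{i+1})=\zt_{N+1-i}-\zt_{N-i}=-(\zt_{N-i}-\zt_{N-i+1}).
$$
As $i$ runs over $1,\ldots,N-1$, so does $N-i$. Hence $w_\g(\Pi)=-\Pi$.

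Second, the orthosymplectic case. For $i\le n$ we have $\zt_{i'}=-\zt_i$, so on $\h^*=\Span(\zt_1,\ldots,\zt_n)$ the operator $w_\g$ acts as $-\id$. This is consistent on the full lattice, since $w_\g(\zt_{i'})=\zt_i=-\zt_{i'}$, and for odd $N$ the middle weight is zero. The root system of $\o\s\p$ is centrally symmetric: $\Rm=-\Rm$, since $\Rm$ is the union of $\Rm^+$ and $-\Rm^+$ (the weights of $\g_+$ and $\g_-$, which are exchanged by super-transposition, as the explicit root vectors (\ref{roots-e}), (\ref{roots-f}) show). Therefore $w_\g(\Rm)=-\Rm=\Rm$ and $w_\g(\Pi)=-\Pi$ trivially.

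The main point requiring care is justifying that $w_\g$ is well defined as a $\Z$-linear map on $\La$. In the orthosymplectic case the $\zt_i$ are not independent, and in the $\g\l$ case with vanishing supertrace-type relations one must check compatibility. I would handle this by noting that any relation among the $\zt_i$ in $\La$ is either $\zt_{i'}+\zt_i=0$ (and $\zt_{(N+1)/2}=0$), which is visibly preserved by $i\mapsto i'$, or there are none in the $\g\l$ case, where the $\zt_i$ are taken as independent. The definition in Definition \ref{Weyl-operator} already asserts uniqueness, so this is a brief remark rather than a real obstacle.
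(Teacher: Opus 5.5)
Your proof is correct. In the $\g\l$ case $w_\g$ is the index reversal $\zt_i\mapsto\zt_{N+1-i}$ on independent weights, which permutes the roots $\zt_i-\zt_j$ and sends $\Pi$ to $-\Pi$; in the orthosymplectic case $\zt_{i'}=-\zt_i$ forces $w_\g=-\id$, so both claims reduce to $\Rm=-\Rm$. The paper states this lemma without proof, treating it as exactly this immediate check from Definition \ref{Weyl-operator}, so your argument coincides with the intended one.
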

Let $\hat \Pi$ denote the basis of simple positive roots of the affine algebra $\hat \g$.
Pick a subset $\hat \Pi_\bullet \subset \hat \Pi$,  put  $\hat \Pi_\circ=\hat \Pi \backslash \hat \Pi_\bullet$, and generate a
 subalgebra $\l=\langle e_\al,f_\al\rangle_{\al\in \hat \Pi_\bullet}\subset \hat \g$ by the simple root vectors.
It is a direct sum of subalgebras, $\l=\sum_{i}\l_i$, corresponding to the connected components of $\hat \Pi_\bullet$ (of the Dynkin sub-diagram).

For $\al\in \hat \Pi_\circ$ let $V^\pm_\al\subset \hat\g_\pm$ denote the $\l$-submodule generated by $e_{\pm \al}\in \hat\g_{\pm \al}$.
It is a tensor product of modules over  the subalgebras in $\l$ corresponding to the connected components in $D_\l$. Such can be modules
of minimal dimension or its skew-(super)symmetrized tensor square over a component of general linear type \cite{Serg}.
In the special case of $\l\simeq\g$,  $V^\pm_{\al_0}$  is the adjoint $\g$-module.
Consider a finite dimensional $\l$-module  $V_\l=\sum_{\al\in \Pi_\circ}V^-_\al+\sum_{\al\in \Pi_\circ}\V^+_\al+\l$.
\begin{lemma}
  If $\al_0\in \hat \Pi_\bullet$, and $\l\not \simeq \g$, then the evaluation representation $\rho_z\colon V_\l\to \g_z\subset \End(V_z)$ is a
  monomorphism of $\l$-modules for each $z\in \C^\times$.
\end{lemma}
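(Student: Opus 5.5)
Since $\al_0\in\hat\Pi_\bullet$ and $\l\not\simeq\g$, the set $\hat\Pi_\bullet$ is a proper subset of $\hat\Pi$ that misses at least one root of $\Pi$. The plan is to show that $\rho_z$ is injective on each homogeneous component of $V_\l$, and that distinct components cannot collide in $\End(V_z)$. Throughout we use the loop realization $\hat\g\to L\g$, under which $\hat\g_{\al+k\dt}$ (for $\al$ a root of $\g$, or $\al=0$) is sent to $\g_\al\tp t^k$. The evaluation map then sends $x\tp t^k\mapsto z^k x$. It is therefore injective on any subspace of $\hat\g$ that contains, for each $\g$-weight $\al$, at most one loop degree $k$ and avoids the center.

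The first step is to compute the affine root content of $V_\l$. Write every affine root as $\bt=\bar\bt+k\dt$ with $\bar\bt$ a root of $\g$ or $0$. For simple roots, $\al\in\Pi$ has $k=0$ and $\al_0=-\xi+\dt$ has $k=1$. Now $\l$ is generated by $\hat\Pi_\bullet$, which omits some $\al_j\in\Pi$, so every root of $\l$ and of $V^\pm_\al$ is a combination $\sum_{\bt\in\hat\Pi} c_\bt\bt$ with $c_{\al_j}=0$. For a root $\bar\bt+k\dt$, the $\dt$-coefficient $k$ equals $c_{\al_0}$, since $\dt=\al_0+\xi$. Because $\xi$ has positive coefficient on every simple root of $\Pi$, in particular on $\al_j$ (this holds for $\xi=\zt_1-\zt_{n+1}$, $\zt_1+\zt_2$ and $2\zt_1$ in all shapes), the projection $\bar\bt=\sum_{\bt\in\Pi}c_\bt\bt-c_{\al_0}\xi$ has $\al_j$-coefficient $-k\,\xi_{\al_j}$. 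Here $\xi_{\al_j}$ denotes that coefficient of $\xi$.

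The second step is to use $|\text{coefficient}|\le\xi_{\al_j}$ for roots of $\g$. This forces $k\in\{-1,0,1\}$, and different values of $k$ give $\al_j$-coefficients of different sign or zero. Hence the loop degree is a function of the $\g$-weight $\bar\bt$ whenever $\bar\bt\neq0$.

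The third step treats the zero weight, which is the Cartan part. It occurs only in $\l$, at $k=0$, spanned by $h_\al$ for $\al\in\hat\Pi_\bullet$. We need this span to avoid the center $h_\dt$ and to map injectively onto a subspace of the diagonal matrices. Note that $h_{\al_0}\mapsto -h_\xi$ modulo the center, and that $\{h_\al\}_{\al\in\hat\Pi_\bullet}$ maps to $\{-h_\xi\}\cup\{h_\al\}_{\al\in\Pi_\bullet}$. These are linearly independent in $\h$ because $\xi$ involves $\al_j\notin\Pi_\bullet$. For general linear $\g$, where the form on $\h$ may be degenerate, we should use the map $\hat\h^*\to\hat\h$ and its image in matrices, namely $h_{\zt_1-\zt_{n+1}}=e_{11}\pm e_{n+1,n+1}$. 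Independence then again follows from the $\al_j$-coefficient argument on the diagonal matrices.

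Combining the steps: the weight spaces of $V_\l$ at distinct affine weights map to distinct $\g$-weight spaces, and the map is injective on each of them. Since $\rho_z$ is $\l$-equivariant by construction, it is a monomorphism of $\l$-modules for every $z\in\C^\times$.

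The main obstacle is the Cartan step in the general linear case with degenerate inner product. There one must check directly that the image of $h_{\al_0}$ is not proportional to a combination of the other $h_\al$. This amounts to showing that the identity matrix and the central element do not interfere, which requires the missing node $\al_j$ explicitly.
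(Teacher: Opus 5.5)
Your Step 1 is false for the modules $V^\pm_\al$. Their weights are $\pm\al$ plus sums of black roots, so they have coefficient $\pm1$ on the white root $\al$ itself. Every $\al\in\hat\Pi_\circ\subset\Pi$ contributes its own $V^\pm_\al$, so no choice of a missing node $\al_j$ gives $c_{\al_j}=0$ on all of $V_\l$; your argument is valid on $\l$ only.

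The global claim, that the loop degree is a function of the $\g$-weight on all of $V_\l$, cannot be rescued, because it is false. Take $\widehat{\g\l(4)}$ with $\hat\Pi_\bullet=\{\al_0,\al_2\}$. Then $V^-_{\al_3}$ contains the weight $-\al_3-\al_2-\al_0=\zt_1-\zt_2-\dt$, which has the same $\g$-weight as $e_{\al_1}\in V^+_{\al_1}$. Indeed $\rho_z(V^+_{\al_1})=\rho_z(V^-_{\al_3})=\Span(e_{12},e_{13},e_{42},e_{43})$, so $\rho_z$ is not injective on $V^+_{\al_1}\oplus V^-_{\al_3}$.

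A further warning sign is that you never use the hypothesis $\l\not\simeq\g$. For $\hat\Pi_\circ=\{\al_k\}$ in $\hat\Ag$ your text applies verbatim, yet there $\rho_z(\l)$ contains every root vector of $\g$, in particular $\rho_z(e_{\al_k})$.

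What can be proved is the summand-wise statement. The paper's proof establishes exactly this, and it is what is used afterwards to regard each $V^\pm_\al$ as an $\l_z$-submodule of $\g$ when extending $w_\l$. It has three parts:
\begin{enumerate}
\item $\rho_z$ is an isomorphism on $\l$.
\item $\rho_z$ is injective on each $V^\pm_\al$, because this module is irreducible and $\rho_z$ does not kill its generator $e_{\pm\al}$.
\item $\rho_z(V^\pm_\al)\cap\l_z=\{0\}$, and this is where $\l\not\simeq\g$ enters.
\end{enumerate}
Your Cartan and real-root analysis correctly gives the first point.

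For the second point you can keep your weight bookkeeping in place of irreducibility. Read the loop degree on $V^\pm_\al$ off the coefficient of $\al$ itself: the $\al$-coefficient of $\bar\bt$ is $\pm1-k\xi_\al$, which still determines $k$.

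The third point needs a separate argument. With at least two white roots it follows by comparing coefficients at a white root other than $\al$. With a single white root it genuinely depends on $\l\not\simeq\g$.
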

\begin{proof}
It is obviously an isomorphism on $\l$. It is also a monomorphism on irreducibles $V^\pm_\al$ because $\rho_z$ is identical
on their simple root generators
and $\rho_z(V^\pm_\al)\cap \l_z=\{0\}$.
\end{proof}
\noindent
The isomorphism $\rho_z\colon \l\to \l_z$, $\rho_z\colon V_\l\to V_{\l_z}$ relates  the weights $\al_0$ and  $-\xi$.

Suppose that $\l\subset \g$. For the $i$-th connected component of $\Pi_\bullet$ we  extend $w_{\l_i}$ from $\La_{\l_i}$ to   $\La_\g$ as  identical on basic weights
$\zt_i$ that annihilate the Cartan subalgebra $\h_i\subset \l_i$.
We  define the Weyl operator $w_\l\in \End(\La)$  of the subalgebra $ \l$
as the product $w_\l=\prod_i w_{\l_i}$.
Clearly $w_\l$ is involutive and preserves the weight lattice and root system of $\l$.

For general $\l\subset \hat \g$, we extend $w_\l$ to  $\Span(\hat \Pi)$ as follows.
\begin{itemize}
\item
If $\al_0\in \hat \Pi_\circ$, then we set $w_\l(\al_0)=\al_0+\xi-w_\l(\xi)=2\dt-\al_0$.
\item
If $\al_0\in \hat \Pi_\bullet$ and $\l\not \simeq \g$, then we apply the module map  $\rho_z\colon V_\l\to \g$ and extend
$w_\l$ to $\hat \Pi_\circ \subset \Pi_\circ $ as for a subalgebra $\l_z$ in $\g$.
\item
If $\al_0\in \hat \Pi_\bullet$ and $\l\simeq \g$, then $\hat \Pi_\circ=\{\al_k\}$ for some $k>0$. We set $w_\l(\al_k)=2\dt-\al_k$.
\end{itemize}

It is easy to see that the operator  $w_\l$ is involutive and preserves  the radical $\mathfrak{N}=\C\dt$ of the inner product on $\hat \h^*$.
It is an isometry if and only if it is an isometry on $\La_\l$.

For instance, let $\g\subset\End(\C^N)$ be  general linear   and  $\hat \Pi_\bullet=\{\al_0\}$. Then $w_\l(\al_0)=-\al_0$.
The $\l$-modules $V^\pm_{\al_i}$ are  trivial for $i=2,\ldots, N-2$ and $V^\pm_{\al_i}\simeq \C^2$ for $i=1,N-1$.
The operator  $w_\l$ acts on  the simple roots of $\g$ by the assignment  $\al_i\mapsto \al_i+\al_0$ for $i=1,N-1$, and
identically on  $\al_j$ with  $2\leqslant j\leqslant N-2$.
Then $w_\l(\dt)=-\al_0+\al_1+\al_0+\al_{N-1}+\al_0+\sum_{i=2}^{N-2}\al_i=\dt$.

For orthosymplectic $\g$,  $w_\l$ is defined as a linear operator on $\hat \h^*=\Span(\hat\Pi)$. For general linear $\g$,
it is sufficient to define $w_\l$ on some basic weight $\zt_k$ in order to extend  it to $\hat \h^*$.
Suppose that $\al_k=\zt_k-\zt_{k+1}\in \hat \Pi_\circ$. Then we set $w_\l(\zt_k)=\zt_k+\sum_{i< k} \al_i$, where the summation is taken over the connected component in
$D_\l$ linked to $\al_k$ on the left (it is empty if $\al_{k-1}\not \in \hat \Pi_\bullet$). This makes  $w_\l$ consistently defined  as an involution on $\Z \al_0+\La_\g$ and hence on $\hat \h^*$  in all cases.

\begin{definition}
The subalgebra $\l\subset \hat \g$ is called regular if $w_\l$ preserves the root system $\hat \Rm$.
\end{definition}
\noindent
Otherwise $\l$  is called irregular. Such an anomaly may occur only in
a special situation explained further. It is clear that $\l$ is irregular if and only if one of $\l_i$
generated by a connected component of $D_\l$
is irregular.
\begin{propn}
  Suppose that sub-diagram $D_\l$ is connected. Then $\l$ is irregular if and  only if
  the following conditions are fulfilled:
\begin{enumerate}
  \item $\hat \g$ is  orthosymplectic with a tail $\t$ (left or right) of shape $\Dg$,
  \item $\l$ is  of type $\Ag$ with an asymmetric  polarization,
  \item $|D_\t\cap D_\l|=1$.
\end{enumerate}
\end{propn}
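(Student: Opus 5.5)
We need to prove: for connected $D_\l$, $w_\l$ fails to preserve $\hat\Rm$ exactly when $\hat\g$ is orthosymplectic with a $\Dg$-shaped tail, $\l$ is of type $\Ag$ with asymmetric polarization, and $D_\l$ meets that tail in exactly one node.

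The plan is to compute $w_\l$ explicitly on basic weights and decide case by case whether it maps roots to roots. First reduce to $\l\subset\g$. If $\al_0\in\hat\Pi_\bullet$ and $\l\not\simeq\g$, the definition transports $w_\l$ through the evaluation isomorphism $\rho_z$. Then $\l_z$ is a Levi subalgebra of $\g$ for a suitable choice of simple roots, obtained by reflecting the diagram so that $\al_0$ plays the role of $-\xi$. If $\l\simeq\g$ or $\al_0\in\hat\Pi_\circ$, then $w_\l$ is $w_\g$ (resp. $w_\l$) on $\h^*$ corrected by multiples of $\dt$. In both cases $w_\l$ preserves $\hat\Rm$ by Lemma \ref{transposition}, since $\hat\Rm=\{\al+k\dt\}$ and $w_\l(\dt)=\dt$. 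So it suffices to check whether $w_\l$, extended to $\La_\g$, preserves $\Rm$ modulo $\Z\dt$.

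Next treat $\l$ of general linear type spanned by $\zt_a-\zt_{a+1},\dots,\zt_{b-1}-\zt_b$. Then $w_\l$ reverses $\zt_a,\dots,\zt_b$ and fixes the other $\zt_i$. For general linear $\g$ this is a permutation of the $\zt_i$. It preserves $\{\zt_i-\zt_j\}$ as a set, so $\l$ is always regular; parity is irrelevant because roots are defined only as weights.

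For orthosymplectic $\g$, $w_\l$ also reverses the primed block $\zt_{b'},\dots,\zt_{a'}$. There are two subcases.
\begin{itemize}
\item If the block avoids the middle, i.e.\ $b<n$, or $b=n$ with a $\Bg$ or $\Cg$ tail, then $w_\l$ is a permutation commuting with $i\mapsto i'$. It therefore preserves $\pm\zt_i\pm\zt_j$, $\pm\zt_i$ and $\pm2\zt_i$. The one point to check is that the long roots $2\zt_i$ exist only for $\eps_i=-1$. Here $\eps_i\eps_j=(-1)^{\bar i+\bar j}$, and a permutation inside an $\Ag$ block that could swap an even index with an odd one seems to threaten this. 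However, the set of roots $\pm\zt_i\pm\zt_j$ is parity-independent, and $2\zt_i$ appears exactly when $\bar i$ has the corresponding parity. So I must verify that $\zt_i\mapsto\zt_{\sigma(i)}$ with $\bar i\ne\overline{\sigma(i)}$ maps $2\zt_i$ to a non-root. This forces me to recheck the $\Cg$-tail and $\Bg$-tail cases carefully. My expectation is that $2\zt_i$ is a root precisely for the indices of one parity, and asymmetry within a block reverses parities. I expect this to be exactly where the $\Dg$ condition enters: for a $\Cg$ tail, $2\zt_n\in D_\l$ would make $\l$ of type $\Cg$ rather than $\Ag$, while for $\Ag$ components I must confirm consistency directly.
\item If the tail is $\Dg$ and $D_\l$ contains exactly one of $\zt_{n-1}\pm\zt_n$, say $\zt_{n-1}+\zt_n$, then $\l$ has type $\Ag$ on the weights $\zt_a,\dots,\zt_{n-1},-\zt_n$. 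In this case $w_\l$ sends $\zt_a\leftrightarrow-\zt_n$, i.e.\ $\zt_a\mapsto-\zt_n$ and $\zt_n\mapsto-\zt_a$.
\end{itemize}

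For the remaining types of $\l$ ($\Bg$, $\Cg$, $\Dg$, or containing both tail nodes), $w_\l$ is $-1$ on the span of the block $\zt_a,\dots,\zt_n$ and the identity elsewhere. This sign change on a subset of coordinates evidently preserves $\Rm$. The same holds when $\l$ contains both $\Dg$ nodes.

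The crux is the second subcase above. Apply $w_\l$ to the root $\zt_a-\zt_{a'}$ (or to $2\zt_a$ when present) and compare parities. If $\bar a=\bar n$, the map is consistent with the root system, since it is then induced by the flip $v_n\leftrightarrow v_{n'}$ composed with a permutation. If $\bar a\ne\bar n$, then, exactly as in the asymmetric-polarization discussion above, the image of a long root $2\zt_i$ is $-2\zt_j$ for an index $j$ of the wrong parity, which is not a root. This is precisely asymmetry of the polarization of $\l$, since $w_\l$ exchanges the end weights $\zt_a$ and $-\zt_n$.

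The main obstacle is pinning down this parity bookkeeping through $\eps_i\eps_j=(-1)^{\bar i+\bar j}$. Conditions 1 and 3 of the statement then follow because only this configuration produces a sign-twisting permutation.
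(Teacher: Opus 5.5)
There is a genuine gap, and it sits exactly at the step you leave as an ``expectation''. The ``only if'' direction requires every connected $\Ag$-type $\l$ that does not meet a $\Dg$ tail in exactly one node to be regular. Your own long-root remark points the other way, and it does not depend on the tail.

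Suppose the grading of $\zt_a,\dots,\zt_b$ is not palindromic. Then the reversal swaps some $i,j$ with $\bar i\neq\bar j$, so $\eps_i\eps_j=(-1)^{\bar i+\bar j}=-1$. If, say, $\eps_i=-1$, then $2\zt_i\in\Rm$ while $w_\l(2\zt_i)=2\zt_j\notin\Rm$. No sign twist is involved, so your closing claim that ``only this configuration produces a sign-twisting permutation'' does not identify the obstruction.

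Here is a concrete instance.
\begin{itemize}
\item[] Take $N=5$, $n=2$, $\bar 1=1$, $\bar 2=0$. Then $\eps_1=-1$, $\eps_2=1$, and $\hat\Pi=\{-2\zt_1,\zt_1-\zt_2,\zt_2\}$ has tails of shapes $\Cg$ and $\Bg$.
\item[] Set $\hat\Pi_\bullet=\{\al_1\}$. Then $w_\l(\al_1)=-\al_1$ and $w_\l(\al_2)=\al_1+\al_2$, the highest weight of $V^+_{\al_2}$.
\item[] Hence $w_\l(2\zt_1)=w_\l(2\al_1+2\al_2)=2\al_2=2\zt_2\notin\hat\Rm$, although neither tail has shape $\Dg$.
\end{itemize}
So with $w_\l$ as defined in Section~\ref{SecWOp}, a direct computation cannot produce conditions 1 and 3. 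Your case analysis cannot be completed as planned.

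The paper does not do this bookkeeping at all. It imports the finite-dimensional criterion from \cite{AMS2}, Proposition 2.6, and proves only the affine reduction:
\begin{itemize}
\item[] For $\al_0\in\hat\Pi_\circ$, regularity in $\hat\g$ is equivalent to regularity in $\g$.
\item[] For $\al_0\in\hat\Pi_\bullet$ with $\t^l\not\subset\l$ (possible only for a $\Dg$ left tail), the flip $\al_0\leftrightarrow\al_1$ is intertwined by $\rho_z$ with $-\xi\leftrightarrow\al_1$, which returns to the first case.
\item[] For $\t^l\subset\l$, the operator $w_{\l_z}$ is a sign change on $\zt_1,\dots,\zt_k$, which preserves $\Rm$. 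Your treatment of the orthosymplectic-type components matches this step.
\end{itemize}
To make your route work, you would have to pin down the conventions of \cite{AMS2} under which such shaft blocks count as regular, rather than recompute from the definition given here.

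Two smaller points. First, in your $\Dg$ crux case, $\bar a=\bar n$ is not sufficient for root preservation. That requires $\eps_{a+k}=\eps_{n-k}$ for all $k$, i.e.\ the full palindromic grading. Second, the sentence ``in both cases $w_\l$ preserves $\hat\Rm$ by Lemma~\ref{transposition}'' is unjustified for $\al_0\in\hat\Pi_\circ$. That lemma concerns $w_\g$ only, and preservation by $w_\l$ is precisely what is being decided.
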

\begin{proof}
The finite dimensional version of this criterion has been proved in \cite{AMS2}, Proposition 2.6.
The case of affine $\hat \g$ will be reduced to it.

a) If $\al_0\in \hat \Pi_\circ$, then $\l$ is regular in $\hat \g$ if and only if it is regular in $\g$, as shown in \cite{AMS2}.

b) Suppose  that $\al_0\in \hat \Pi_\bullet $.
The situation when the left tail subalgebra  $\t^l$ is not in  $\l$ may occur only if $\t^l$ has shape $\Dg$.
Then the tail flip $\al_0\leftrightarrow \al_1$ and the flip $-\xi\leftrightarrow \al_1$ are induced by automorphisms of $\l$ and, respectively,
of  $\l_z$ that are  intertwined by  $\rho_z$. This  takes us to already considered case of $\al_0\in \hat \Pi_\circ$.
We are left to examine the inclusion $\t^l\subset \l$. Then $\l$ is a finite dimensional orthosymplectic Lie superalgebra
with a symmetric polarization.  Consider its isomorphic image $\l_z\subset \g$ via the evaluation representation.
The operator $w_{\l_z}$ acts by $\zt_i\mapsto \zt_{i'}$ for $1\leqslant  k<n$ and identically on $\zt_i$ with $k< i \leqslant n$.
A direct examination shows that it  preserves the root system of $\g$. Therefore $w_\l$ preserves  $\hat \Rm$.
\end{proof}
\noindent
It follows that $w_\l$ with regular $\l$ preserves the lattice $\Z\al_0+\La_\g$. This holds true for general linear $\hat \g$ by construction.
For orthosymplectic $\g$ this is a consequence of the equality $\Z \Pi=\La_\g$.
\begin{lemma}
\label{high-low}
Suppose that $\l$ is regular. Then
\begin{itemize}
  \item[i)]For each $\al\in \hat  \Pi_\circ$, the $\l$-module $V^\pm_\al$  is determined by its lowest (highest) weight.
  \item[ii)] The operator $w_\l$ flips  the highest and lowest weights of $V^\pm_\al$ for each $\al \in \hat \Pi_\circ$.
\end{itemize}
\end{lemma}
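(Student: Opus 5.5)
We reduce Lemma \ref{high-low} to the finite dimensional statement of \cite{AMS2}, using the same case split as in the proof of the preceding Proposition. The affine root $\al_0$ enters only through the finite dimensional Levi subalgebra $\l$, and all our modules are finite dimensional over $\l$. Since $\l=\sum_i\l_i$ and $V^\pm_\al$ is a tensor product of modules over the components $\l_i$ attached to $\al$, it suffices to treat each tensor factor separately. The weight of $V^\pm_\al$ is then the sum of $\pm\al$ and the weights of the factors. The operators $w_{\l_i}$ commute and each acts trivially on the weights orthogonal to $\h_i$, so $w_\l$ flips highest and lowest weights of the product if it does so factorwise.

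\textbf{Case $\al_0\in\hat\Pi_\bullet$, $\l\not\simeq\g$.} By the Lemma above, $\rho_z\colon V_\l\to\g_z$ is a monomorphism of $\l$-modules, and it identifies $\al_0$ with $-\xi$. By construction, $w_\l$ is defined on $\hat\Pi_\circ$ by transporting $w_{\l_z}$ through $\rho_z$. Hence both i) and ii) follow from the corresponding statements for the regular subalgebra $\l_z\subset\g$, which are established in \cite{AMS2}.

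\textbf{Case $\al_0\in\hat\Pi_\circ$.} Here $\l\subset\g$, and $V^\pm_\al$ with $\al\in\Pi_\circ$ is literally a $\l$-submodule of $\g_\pm$, so \cite{AMS2} applies directly. It remains to treat $V^\pm_{\al_0}$. Under the evaluation map, $e_0\mapsto zf_\xi$, so $V^+_{\al_0}$ is isomorphic, as an $\l$-module, to the $\l$-submodule of $\g$ generated by the lowest root vector $f_\xi$, with weights shifted by $\dt$.

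\begin{itemize}
\item For i), the lowest weight of $V^+_{\al_0}$ is $\al_0$ by construction, since $e_{\al_0}$ is killed by all $f_\bt$ with $\bt\in\hat\Pi_\bullet$ because $\al_0-\bt$ is not a root. Its highest weight is the lowest weight of the corresponding module in $\g$, which is determined.
\item For ii), we compute the highest weight $\al_0+\sum_{\bt\in\Pi_\bullet}m_\bt\bt$, i.e. $\dt$ minus the lowest weight of $\l$ acting on $f_\xi$. We then check that $w_\l(\al_0)=\al_0+\xi-w_\l(\xi)$ equals it. This amounts to the claim that $w_\l(-\xi)=-w_\l(\xi)$ is the lowest weight of $\l\cdot f_\xi$ when $-\xi$ is its highest weight, i.e. the finite dimensional statement ii) applied to the $\l$-module generated by $f_\xi$.
\item Since $-\xi$ is not simple, this is not directly covered by \cite{AMS2}. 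I would verify it by using that $w_\l$ reverses the weights of $\l$ itself, and that $\l\cdot f_\xi$ is a minimal or skew-square module of each $\l_i$ as in \cite{Serg}.
\end{itemize}

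\textbf{Case $\l\simeq\g$.} Here $\hat\Pi_\circ=\{\al_k\}$ and $V^\pm_{\al_k}$ is the adjoint module. Its lowest weight is $\al_k$, and its highest weight is $\al_k+(\text{highest root of }\l)$, which one checks equals $2\dt-\al_k$ in $\hat\h^*$. This matches the definition $w_\l(\al_k)=2\dt-\al_k$.

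The main obstacle is ii) for $V^\pm_{\al_0}$ in the second case, particularly when $\l$ has a $\Dg$-type tail component adjacent to $\al_0$. There regularity is exactly what guarantees that the transported Weyl operator sends $\xi$ to the correct extremal weight. I would handle this by invoking the preceding Proposition to exclude irregular components, and checking the few remaining tail types ($\Bg,\Cg,\Dg$ with symmetric polarization) via the explicit action $\zt_i\mapsto\zt_{i'}$ of $w_\l$.
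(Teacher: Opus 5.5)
Your outline matches the paper's: reduce to the finite dimensional statement of \cite{AMS2}, transport through $\rho_z$ when $\al_0$ is black, and treat the genuinely affine modules by hand. But it stops exactly where the affine lemma has content.

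In your second case you reduce ii) for $V^\pm_{\al_0}$ to the claim that $w_\l$ exchanges the extreme weights of $\l\cdot f_\xi\subset\g$. Note that $-\xi$ is the \emph{lowest} weight of this module, not the highest as you wrote. You then only announce a verification, so this part of the lemma is unproved. The obstacle you name (``$-\xi$ is not simple'') does not arise in the paper's treatment. When $\l\not\simeq\g$, $V^\pm_{\al_0}$ is a tensor product, over the blocks of $\l$ adjacent to $\al_0$, of natural modules, their duals, or a (skew-)supersymmetric square. These are modules already met in \cite{AMS2}, and the paper's proof rests on exactly this: the only new $\l$-modules occur when $\hat\Pi_\circ$ is a single node.

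If you prefer a direct check, it is short, because $w_\l$ reverses the indices inside each block. For general linear $\g$, $f_\xi\propto e_{N,1}$, and $\l\cdot f_\xi$ is spanned by the $e_{ij}$ with $i\in I_N$, $j\in I_1$, where $I_1,I_N$ are the (distinct) blocks containing $1$ and $N$. Its highest weight is therefore $\zt_{\min I_N}-\zt_{\max I_1}=w_\l(\zt_N-\zt_1)$. The orthosymplectic cases $-\xi=-\zt_1-\zt_2$ and $-\xi=-2\zt_1$ go the same way, and this is where regularity is needed. For example, take an $\Ag$-block $\{1,\ldots,p\}$ with $\bar 1\neq\bar p$ attached to a $\Dg$-fork. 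Then the top weight of $\l\cdot f_\xi$ is $-2\zt_p$ rather than $w_\l(-\xi)=-\zt_{p-1}-\zt_p$, and the flip fails.

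The other two cases also need repair. For $\l\simeq\g$, the module generated by the lowest vector $e_{\al_k}$ is adjoint. Its highest weight is $\al_k$ plus \emph{twice} the highest root $\dt-\al_k$ of $\l$, i.e.\ $2\dt-\al_k$. With one copy you get $\dt$, and the check fails. For $k=0$ the correct value is the paper's $\al_0+\xi-w_\g(\xi)=\al_0+2\xi$.

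Your first case also silently contains the situation the paper singles out: $\hat\Pi_\circ=\{\al_n\}$ with $\al_n$ the short root of a $\Bg$-tail. There $\l\not\simeq\g$, but $\l_z$ is a full-rank orthosymplectic subalgebra of $\g$ acting on $\Span\{v_i\}_{i\neq n+1}$. It is not a Levi subalgebra for any polarization, so \cite{AMS2} says nothing about it. The paper instead observes that $V^\pm_{\al_n}$ is the natural $\l$-module, whose extreme weights are exchanged by $w_\l$ by the very definition of the Weyl operator. The same caveat applies when $\al_0$ and both tails are black, so that $\l_z$ has two orthosymplectic components. Your opening factorwise remark handles this, but you have to invoke it explicitly rather than cite \cite{AMS2} for $\l_z$ as a whole.
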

\begin{proof}
The only $\l$ modules $V^\pm_\al$ that are not isomorphic to those occuring in finite dimensional $\g$ studied in \cite{AMS2}
correspond to either $\hat \Pi_\bullet =\{\al_0\}$ or
$\hat \Pi_\bullet =\{\al_n\}$.
In the first case, $V^+_{\al_0}$ are isomorphic to the adjoint module $\g$. Its lowest weight is $\al_0$ while the highest is $\al_0 +\xi-w_\g(\xi)$,
which is related with $w_\g(\al_0)$ by the evaluation $\l$-module isomorphism. The module $V^-_{\al_0}$ is isomorphic to  $V^+_{\al_0}$.

The second  case reduces to the first unless $\al_n$ is short (the right tail has shape $\Bg$). But then $V^\pm_{\al_n}$ is isomorphic
to the natural $\g$-module.
\end{proof}
\noindent
It follows from this lemma that the extension of $w_\l$ to an operator on $\hat \h^*$ is unique for regular $\l$ (depends only on
the $\l$-module structure of  $V_\l$).
We will demonstrate that the extension of $w_\l$ depends only on the isomorphism  class of the pair $(\hat \g,\l)$.
Let $D$, $D'$  be Dynkin diagrams of two isomorphic Lie superalgebras $\hat \g$ and $\hat \g'$, respectively,  and  $\varphi\colon D\to D'$ an isomorphism
that is induced by  the isomorphism $\Phi\colon \hat \g\to \hat \g'$.
They take a sub-diagram $D_\l\subset D$ to $D_\l'\subset D'$ and the subalgebra $\l\subset \hat \g$ to $\l'\subset \hat \g'$.
It is clear that $\l'$ is regular if and only if $\l$ is regular.
\begin{propn}
Suppose that $\l\subset \hat \g$ is regular. Then the operators $w_\l\in \End(\hat \h)$ and $w_{\l'}\in \End(\hat \h')$ are conjugated.
\end{propn}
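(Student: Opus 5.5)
The plan is to show that the dual map $\Phi^*$ (more precisely the map $\hat\h'^*\to\hat\h^*$ induced by $\Phi|_{\hat\h}$, after adjusting $\Phi$ by an inner automorphism so that it sends $\hat\h$ to $\hat\h'$ and the Chevalley generators of $\l$ to those of $\l'$) intertwines $w_\l$ and $w_{\l'}$. Since $\varphi$ is induced by $\Phi$, the map $\phi=\Phi^*$ sends $\hat\Pi$ onto $\hat\Pi'$, $\hat\Pi_\bullet$ onto $\hat\Pi'_\bullet$, $\hat\Pi_\circ$ onto $\hat\Pi'_\circ$, and, for general linear $\hat\g$, is determined on $\hat\h^*$ once we also fix the image of one basic weight. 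The goal is to prove $\phi\circ w_\l=w_{\l'}\circ\phi$ on $\hat\h^*$. This exhibits the required conjugation, since $\phi$ is invertible.

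First I check the identity on the span of $\hat\Pi_\bullet$. There $w_\l$ restricts to the product $\prod_i w_{\l_i}$ of Weyl operators of the connected components. $\Phi$ restricts to an isomorphism $\l\to\l'$ that matches components and their root bases. Each $w_{\l_i}$ acts on $\Pi_{\l_i}$ as $-\sigma_i$, where $\sigma_i$ is the diagram involution singled out by Lemma \ref{transposition}. It is therefore intrinsic to the pair (algebra, root basis), so $\phi w_{\l_i}\phi^{-1}=w_{\l'_i}$ on the root lattice of $\l_i$.

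Next I handle $\al\in\hat\Pi_\circ$, using Lemma \ref{high-low}. Regularity of $\l$ (and hence of $\l'$) implies that $w_\l(\al)$ is the highest weight of the $\l$-module $V^+_\al$, whose lowest weight is $\al$. The map $\Phi$ carries $V^+_\al$ isomorphically onto $V^+_{\phi(\al)}$ as modules over $\l\simeq\l'$, and it maps weight spaces to weight spaces via $\phi$. So the highest weight goes to the highest weight: $\phi(w_\l(\al))=w_{\l'}(\phi(\al))$. This covers all three cases of the definition uniformly, including $\al_0$ and the case $\l\simeq\g$, where $V^+_\al$ is the adjoint module with highest weight $2\dt-\al$. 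It also shows, as remarked after Lemma \ref{high-low}, that the extension of $w_\l$ does not depend on the ad hoc choices made in the definition.

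Finally, the general linear case needs the operator on one extra vector $\zt_k$ with $\al_k\in\hat\Pi_\circ$, because $\hat\Pi$ does not span $\hat\h^*$. Here $w_\l(\zt_k)=\zt_k+\sum\al_i$ over the left-adjacent component. I expect this to be the main obstacle, since $\varphi$ may be a rotation or reflection of the cyclic diagram, and a reflection reverses left and right. My first attempt would be an intrinsic description: $w_\l(\zt_k)-\zt_k$ is the unique element $\mu$ of the root lattice of the component adjacent to $\al_k$ on the appropriate side such that $\zt_k$ and $\zt_k+\mu$ are the extreme weights of the natural module of that component. Then $\phi$ transports this description, with reflections exchanging the roles of $\zt_k$ and $-\zt_{k+1}$ and the two consistent choices being equal by the involutivity checked earlier in the paper. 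Linearity then gives the identity on all of $\hat\h^*$.
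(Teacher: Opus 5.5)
Your treatment of the white nodes is the paper's argument in a cleaner form. The paper also rests on $\Phi$ carrying $V_\l$ onto $V_{\l'}$. Reading $w_\l(\al)$ off as the highest weight of $V^+_\al$ through Lemma~\ref{high-low}~ii) makes uniform what the paper does by splitting on the position of $\al_0$ and treating $\l\simeq\g$ separately via $\varphi(\dt)=\dt'$. In the general linear step there is one small correction. The left- and right-hand recipes for $w_\l$ on $\zt_k$ and $\zt_{k+1}$ agree because both reproduce $w_\l(\al_k)$, the top weight of $V^+_{\al_k}$; involutivity is not the reason.

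The step that fails as written is the black part. Lemma~\ref{transposition} only says that $-w_{\l_i}$ permutes $\Pi_{\l_i}$. Which permutation it is depends on the natural module used to define $w_{\l_i}$, not on the pair (algebra, root basis). An $A_3$ component realized as $\o(6)$ gets $w_{\l_i}=-\id$ on its roots, whereas realized as $\g\l(4)$ it gets minus the chain reversal.

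What the argument needs is that $\varphi$ sends general linear components to general linear ones and orthosymplectic components to orthosymplectic ones. Then $-w_{\l_i}$ is the chain reversal, respectively the identity, on $\Pi_{\l_i}$, and both are compatible with any diagram isomorphism. That holds for every isomorphism except the triality-type symmetries of the purely even $\widehat{\o}(8)$. There, with the paper's conventions, the statement itself fails.

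Concretely, the automorphism swapping $\al_0$ and $\al_3$ takes $\hat\Pi_\bullet=\{\al_0,\al_1,\al_2\}$ (an $\o(6)$ component after evaluation) to $\{\al_3,\al_1,\al_2\}$ (a $\g\l(4)$ component).
\begin{itemize}
\item The first operator $w_\l$ negates $\al_0,\al_1,\al_2$ and adds $\al_0+\al_1+2\al_2$ to $\al_3$ and $\al_4$.
\item The second operator negates $\al_2$, sends $\al_1\mapsto-\al_3$ and $\al_3\mapsto-\al_1$, and adds $\al_1+2\al_2+\al_3$ to $\al_0$ and $\al_4$.
\end{itemize}
These have traces $-1$ and $+1$ on $\Span(\hat\Pi)$, so they are not conjugate. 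The paper's phrase ``constructed in a similar way'' passes over the same point. Type preservation should therefore be stated explicitly, both in your Step 1 and as a hypothesis of the statement.
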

\begin{proof}
Observe that the automorphism $\Phi$ not only relates $\l$ and $\l'$ but also intertwines their modules $V_\l$ and $V_{\l'}.$
Therefore  the extensions of $w_\l$ to $\hat \h$ and $w_{\l'}$ to $\hat \h'$ are $\varphi$-conjugated  once $\al_0\in \hat \Pi_\bullet \cap \hat \Pi_\bullet '\cup   \hat \Pi_\circ \cap \hat \Pi_\circ '$, because they are constructed in a similar way.
The same reasoning applies if $\al_0$ is in the symmetric difference of $\hat \Pi_\bullet$ and $\hat \Pi_\bullet '$,
provided $\l\not \simeq\g$.

Finally, consider the case when $\l\simeq \g$. We can assume that  $\hat \Pi_\circ=\{\al_0\}$ and $\hat \Pi_\circ'=\{\al_k\}$ for some $k$.
Then $w_\l(\al_0)=2\dt -\al_0$ and $w_{\l'}(\al_k)=2\dt' -\al_k$. The transformation $\varphi$ relates the root $\delta$ with $\delta'$, which completes  the proof.
\end{proof}

Now we apply the Weyl operator to construct spherical pairs of Lie superalgebras.
Suppose that  $\tau \in \Aut(\hat \Pi_\circ)$  is a permutation and let $\tilde \al$ denote the highest weight of $V_{\tau(\al)}^+$.
Suppose that $\tilde \al$ and $\al$ have the same parity.
\begin{definition}
\label{triple}
  The triple $(\hat \g,\l, \tau)$ is called super-symmetric if
\be
(\mu+\tilde\mu,\al)&=0,& \quad \forall \mu\in \hat \Pi_\circ, \quad \al\in \Pi_{\bullet},
\label{1nd-cond}\\
(\mu+\tilde\mu,\nu-\tilde \nu)&=0,& \quad \forall \mu,\nu\in \hat \Pi_\circ.
\label{2nd-cond}
\ee
\end{definition}
\noindent
Our goal is to find all $(\hat \Pi_\bullet,\tau)$ solving the system (\ref{1nd-cond}--\ref{2nd-cond}). The Weyl operator introduced above is
specially devised for this task. However, it features the required properties only for regular $\hat \Pi_\bullet$.
Nevertheless  irregular $\hat \Pi_\bullet$ are irrelevant. That is shown in \cite{AMS2} for finite-dimensional $\g$, and a similar  reasoning works for $\hat \g$ either.
So we assume  that $\hat \Pi_\bullet$ is regular.
We extend $\tau$ as $-w_\l$ on $\hat \Pi_\bullet$ and regard it as  a permutation on $\hat \Pi$.
Then, by Lemma \ref{high-low}, we can set $\tilde \al =w_\l\circ \tau(\al)\in \hat \Rm^+$ for all $\al\in \hat  \Pi_\circ$.
\begin{lemma}
\label{tau-commutes w}
Suppose that $V^+_\al\simeq V^-_{\tau(\al)}$ for all $\al\in \hat \Pi_\circ$. Then  $\tau$ commutes with $w_\l$ as a $\Z$-linear endomorphism of the root lattice.
\end{lemma}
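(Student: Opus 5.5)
We need to show $\tau w_\l = w_\l \tau$ on the root lattice $\Z\hat\Pi$. Here $\tau$ is extended to $\hat\Pi$ by $\tau=-w_\l$ on $\hat\Pi_\bullet$ and is a permutation of $\hat\Pi_\circ$. Since both operators are $\Z$-linear, it suffices to check the identity on the basis $\hat\Pi$. We split into the two types of simple roots.

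\emph{Black roots.} For $\al\in\hat\Pi_\bullet$ we have $\tau(\al)=-w_\l(\al)$. Since $w_\l$ is involutive and preserves $\Z\hat\Pi_\bullet$, and $-w_\l$ permutes $\hat\Pi_\bullet$ (by Lemma \ref{transposition} applied componentwise to the $\l_i$), both sides are easy to compute. First, $\tau w_\l(\al)=\tau(-\tau(\al))=-\tau^2(\al)=-(-w_\l)^2(\al)=-\al$. Second, $w_\l\tau(\al)=-w_\l^2(\al)=-\al$. So the identity holds on $\hat\Pi_\bullet$, and hence on $\Z\hat\Pi_\bullet$.

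\emph{White roots.} For $\al\in\hat\Pi_\circ$, Lemma \ref{high-low} says $w_\l$ exchanges the lowest and highest weights of $V^+_\al$. The lowest weight of $V^+_\al$ is $\al$, so $w_\l(\al)=\al+\beta_\al$, where $\beta_\al\in\Z_{\ge0}\hat\Pi_\bullet$ is the highest weight of $V^+_\al$ minus $\al$. Then
$$w_\l\tau(\al)=\tau(\al)+\beta_{\tau(\al)}, \qquad \tau w_\l(\al)=\tau(\al)+\tau(\beta_\al).$$
Commutation therefore reduces to showing $\tau(\beta_\al)=\beta_{\tau(\al)}$ for all $\al\in\hat\Pi_\circ$.

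This is where the hypothesis $V^+_\al\simeq V^-_{\tau(\al)}$ enters. The weights of $V^-_{\tau(\al)}$ are the negatives of those of $V^+_{\tau(\al)}$. The isomorphism $V^+_\al\simeq V^-_{\tau(\al)}$ is one of $\l$-modules, so it identifies $\h_\l$-weights (restrictions to the Cartan of $\l$). Matching highest and lowest weights, the highest weight of $V^+_\al$ corresponds to $-\tau(\al)$. Hence the highest-minus-lowest difference $\beta_\al$ equals $-w_\l(\beta_{\tau(\al)})$ as an element of $\Z\hat\Pi_\bullet$, with the lowest-weight vector $\al$ matched to the vector of weight $-\tau(\al)-\beta_{\tau(\al)}$. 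This step is the main obstacle. Weights in $\Z\hat\Pi_\bullet$ are determined by their pairing with $\h_\l$, at least modulo the radical $\Ng$ and the centre of $\l$, which do not affect black-root combinations. Using Lemma \ref{high-low}(i) to pin down each module by a single extremal weight, we will argue that the $\l$-weight difference between the extremal vectors is invariant under the isomorphism. Reversing order under duality gives $\beta_\al=-w_\l(\beta_{\tau(\al)})=\tau(\beta_{\tau(\al)})$.

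Finally, apply $\tau$, which is an involution on $\hat\Pi_\bullet$ being $-w_\l$ there, to get $\tau(\beta_\al)=\beta_{\tau(\al)}$. Care is needed if $\tau$ is not an involution on $\hat\Pi_\circ$; in that case we would instead use the hypothesis for $\tau(\al)$ as well, or directly compare $w_\l(\tau\al)$ with $\tau w_\l(\al)$ through the highest weights. The affine special cases ($\l\simeq\g$, and $\al_0$ white) should be checked directly from the explicit extension formulas: there $w_\l(\al)=2\dt-\al$ and $\tau$ fixes $\dt$, so these are immediate.
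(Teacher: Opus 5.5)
Your reduction is correct. On $\hat\Pi_\bullet$ both composites equal $-\id$. For white $\al$, commutation is equivalent to $\tau(\beta_\al)=\beta_{\tau(\al)}$, where $\beta_\gm=w_\l(\gm)-\gm\in\Z_{\geqslant 0}\hat\Pi_\bullet$. The gap is in the step you flagged as the main obstacle, and it has two parts.

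First, the matching you write down does not give your formula. You pair the lowest weight $\al$ with $-\tau(\al)-\beta_{\tau(\al)}$ and the highest weight $\al+\beta_\al$ with $-\tau(\al)$. Subtracting yields $\beta_\al=\beta_{\tau(\al)}$ on $\h_\l$, with no twist; ``reversing order under duality'' does not produce the operator $-w_\l$. What you are missing is the identity $w_\l(\beta_\gm)=w_\l^2(\gm)-w_\l(\gm)=-\beta_\gm$, which holds simply because $w_\l$ is an involution. It gives $\tau(\beta_\gm)=\beta_\gm$ for every white $\gm$. So your formula $\beta_\al=-w_\l(\beta_{\tau(\al)})$ is true for a trivial reason, and the white-root case is \emph{exactly} the statement $\beta_\al=\beta_{\tau(\al)}$ in $\Z\hat\Pi_\bullet$. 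With this, your worry about involutivity of $\tau$ on $\hat\Pi_\circ$ disappears. The affine cases also need no separate treatment, since Lemma \ref{high-low} ii) is all you use. Your shortcut for them is not valid as stated anyway: $w_\l(\al_0)=\al_0+\xi-w_\l(\xi)$ equals $2\dt-\al_0$ only when $w_\l(\xi)=-\xi$, and $\tau(\dt)=\dt$ has not been established for $\tau$ at this point.

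Second, and more seriously, the passage from equality on $\h_\l$ to equality in $\Z\hat\Pi_\bullet$ rests on a false claim. Black-root combinations are not detected by $\h_\l$ when $\l$ has a component with degenerate Cartan matrix, i.e. of type $\g\l(m|m)$, which is allowed since black blocks may have arbitrary polarization. For a single isotropic odd black node $\beta$ one has $\beta(h_\beta)=(\beta,\beta)=0$. So the extents $0$ (trivial module) and $\beta$ (the module $\Span(e_\al,[e_\beta,e_\al])$) have the same restriction to $\h_\l$.

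You therefore have to use the module structure, not just weights. For instance:
\begin{itemize}
\item Since $\l_-$ kills $e_\al$, one has $V^+_\al=\bigoplus_\gm U(\l_+)_\gm e_\al$, and the summand of index $\gm$ is the $\hat\h$-weight space of weight $\al+\gm$.
\item An $\l$-isomorphism $\phi$ satisfies $\phi\bigl(U(\l_+)_\gm e_\al\bigr)=U(\l_+)_\gm\phi(e_\al)$.
\item Hence, once $\phi(e_\al)$ spans the lowest-weight line of $V^-_{\tau(\al)}$, the supports of the two gradings coincide, and so do their maxima $\beta_\al$ and $\beta_{\tau(\al)}$.
\end{itemize}
That $\phi(e_\al)$ spans the lowest-weight line is automatic when the $\l_-$-singular vectors form a single line, e.g. for irreducible modules; otherwise use the explicit componentwise description of $V^\pm_\al$. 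The paper offers nothing beyond a reference to the finite-dimensional Lemma 2.9 of \cite{AMS2}, so this is precisely the point your argument has to supply.
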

\begin{proof}
  Similar to \cite{AMS2}, Lemma 2.9.
\end{proof}
\noindent
Remark that the hypothesis of this lemma is equivalent to condition (\ref{1nd-cond}) by virtue of Lemma \ref{high-low} i).
Also note that $\tau$ is even as a permutation on $\hat \Pi$ if and only if $w_\l$ is even.

Define a  linear map $\theta=-w_\l\circ \tau\colon \hat \h^*\to \hat \h^*$. It preserves $\hat \Rm$ because so do $\tau$ and $w_\l$. Furthermore,
$\theta(\al)=-\tilde \al$ for $\al \in \hat \Pi_\circ$
and $\theta(\al)=\al$ for $\al \in \hat \Pi_\bullet$.
 The system of equalities  (\ref{1nd-cond}) and (\ref{2nd-cond})
 translates to a single identity
\be
\label{gen-cond}
\bigl(\al+\theta(\al),\bt-\theta(\bt)\bigr)=0,\quad \forall \al,\bt\in \hat \Pi.
\ee

\begin{propn}
\label{tau-invol-orth}
Condition (\ref{gen-cond})
is  fulfilled if and only if the permutation $\tau$ is involutive, commutes with $w_\l$, and the composition  $\theta=-w_\l\circ \tau$ extends to
 an involutive isometry on $\hat \h^*$.
\end{propn}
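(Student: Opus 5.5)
We argue along the lines of the finite dimensional case (\cite{AMS2}), working with the bilinear form $B(x,y)=\bigl(x+\theta(x),\,y-\theta(y)\bigr)$ on $\Span(\hat\Pi)=\hat\h^*$. Since $\hat\Pi$ spans $\hat \h^*$ (for general linear $\g$ we add one basic weight $\zt_k$, on which the identity can be checked separately or reduced by linearity), (\ref{gen-cond}) says exactly that $B\equiv 0$.

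\emph{Sufficiency.} Suppose $\tau$ is involutive and commutes with $w_\l$, so $\theta=-w_\l\circ\tau$ satisfies $\theta^2=w_\l\tau w_\l\tau=w_\l^2\tau^2=\id$, and suppose $\theta$ is an isometry. Expanding,
$$
B(x,y)=(x,y)-(\theta x,\theta y)+(\theta x,y)-(x,\theta y).
$$
The first two terms cancel by the isometry property. By isometry and involutivity, $(x,\theta y)=(\theta x,\theta^2y)=(\theta x,y)$, so the last two terms cancel as well. Hence (\ref{gen-cond}) holds.

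\emph{Necessity.} Assume (\ref{gen-cond}). Restricting to $\al\in\hat\Pi_\circ$ and $\bt\in\hat\Pi_\bullet$, where $\theta(\bt)=\bt$, turns (\ref{gen-cond}) into condition (\ref{1nd-cond}). By the remark after Lemma \ref{tau-commutes w}, this gives $V^+_\al\simeq V^-_{\tau(\al)}$, so $\tau$ commutes with $w_\l$.

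From $B=0$ we first derive the isometry property. Writing $B(x,x)=(x,x)-(\theta x,\theta x)$, the mixed terms cancel by symmetry of the form, so $B(x,x)=0$ gives $(\theta x,\theta x)=(x,x)$. Polarization then makes $\theta$ an isometry on $\hat\h^*$.

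It remains to show $\tau^2=\id$, which is equivalent to $\theta^2=\id$ given commutation. With $\theta$ an isometry, $B=0$ gives $(\theta x,y)=(x,\theta y)$, so $\theta$ is self-adjoint. Combining self-adjointness and isometry, $(\theta^2x,y)=(x,y)$ for all $x,y$. Therefore $\theta^2-\id$ takes values in the radical $\mathfrak N=\C\dt$.

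This is the main obstacle: the form on $\hat\h^*$ is degenerate (and for finite $\g\l$ one also tracks $\zt_k$), so the Gram argument alone does not pin down $\theta^2$. The plan is to use combinatorics instead. $\tau$ is a permutation of $\hat\Pi$, so $\tau^2$ is a permutation, and since $\tau$ commutes with $w_\l$, $\theta^2=\tau^2$. A permutation of $\hat\Pi$ that differs from the identity by multiples of $\dt$ must be the identity. Indeed, $\tau^2(\al)-\al\in\C\dt$ with both $\tau^2(\al)$ and $\al$ simple roots; since $\dt$ has full support on $\hat\Pi$ with positive coefficients, a difference of two simple roots is a multiple of $\dt$ only if the two roots coincide (when $|\hat\Pi|>2$). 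Small-rank cases, and the extra weight $\zt_k$ in the $\g\l$ case, would be checked by hand.
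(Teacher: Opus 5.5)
Your proof is correct in substance, but on the decisive step (involutivity) it takes a different route from the paper. The paper gets the isometry from $\al=\bt$, then pins down $\theta(\dt)$ using that $\theta$ preserves $\Ng$ and $\hat\Rm$ (via the fixed black roots, or via coprimality of the marks of $\dt$). It concludes with ``symmetric and orthogonal, hence involutive''.

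You instead extract the commutation $\tau w_\l=w_\l\tau$ already in the forward direction, from the black--white part of (\ref{gen-cond}). Then $\theta^2=\tau^2$ is a permutation of $\hat\Pi$, and you combine this with $\theta^2\equiv\id \bmod \C\dt$ and the positivity of all coefficients of $\dt$. Your route uses more structure (that $\tau$ permutes $\hat\Pi$, and Lemma \ref{tau-commutes w}), but it is the more complete one. When the form has a radical, a symmetric isometry preserving $\C\dt$ need not be involutive: take $x\mapsto \pm x+\varphi(x)\dt$ with $\varphi\ne 0$, $\varphi(\dt)=0$. So the paper's last step tacitly needs an input of exactly the kind you supply. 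You also obtain all three conclusions ($\tau^2=\id$, commutation, $\theta^2=\id$) in the necessity direction, which the paper leaves implicit. No small-rank exception is needed: $\bt-\al=c\,\dt$ with $\al\ne\bt$ would force $ck_\bt=1$ and $ck_\al=-1$ with $k_\al,k_\bt>0$.

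Two things need repair.

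\emph{The derivation of (\ref{1nd-cond}) is vacuous as written.} With $\al\in\hat\Pi_\circ$ in the first slot and $\bt\in\hat\Pi_\bullet$ in the second, $\bt-\theta(\bt)=0$, so (\ref{gen-cond}) gives nothing. Put the black root first: $\bigl(2\bt,\mu+\tilde\mu\bigr)=0$ for $\bt\in\hat\Pi_\bullet$, $\mu\in\hat\Pi_\circ$, which is (\ref{1nd-cond}).

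\emph{For general linear $\g$, the weight $\zt_k$ cannot be handled ``by linearity''.} Since $\zt_k\notin\Span(\hat\Pi)$, (\ref{gen-cond}) says nothing about it. On $\Span(\hat\Pi)$ alone the radical may be larger than $\C\dt$: for $\g\l(m|m)$-type gradings it contains the supertrace weight $\sum_i(-1)^{\bar i}\zt_i$. A difference of two distinct simple roots can then lie in the radical, e.g.\ $\al_0-\al_2$ for $\widehat{\g\l}(2|2)$ with grading $(0,0,1,1)$. So your positivity argument does not transfer verbatim, and that case needs a separate argument.

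The paper makes the same leap (``bilinear, hence on all of $\hat\h^*$''), so this is a caveat on both proofs rather than a defect of your approach. In the orthosymplectic case your argument is complete.
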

\begin{proof}
Notice that a linear operator being orthogonal and involutive is the same as being symmetric and involutive,
or orthogonal and symmetric simultaneously.

Condition (\ref{gen-cond}) is bilinear and therefore holds true for any pair of vectors from $\hat \h^*$.
Setting $\al=\bt$ in  (\ref{gen-cond}) we find that $\theta$ is an isometry.
Let us prove that $\theta$ is an involution.
Expand the root $\dt\in \mathfrak{N}$ over the basis  of simple roots: $\dt=\sum_{i=0}^n k_i \al_i\subset \hat \Rm^+$ with $k_i\in \N$.
Since $\theta $ preserves $\mathfrak{N}$ and the root system, we arrive at the equality
$$
\theta(\dt) =\sum_{i=0}^n k_i' \al_k= k\sum_{i=0}^n k_i\al_k,
$$
with some integers $k_i'$ and  $k\not =0$.
If $\hat \Pi_\bullet \not =\varnothing$, then $\theta(\al)=\al\in \hat \Pi_\bullet$, which forces $k_i'= 1$ for all $i$.
Suppose that $\hat \Pi_\bullet =\varnothing$, so that $w_\l=\id$. Then $\theta=\tau$ is a permutation of $\hat \Pi$
meaning that $\theta(\dt)=\dt$ because all $k_i$ are coprime.

The rest of the proof is similar to the finite dimensional case. The condition
 (\ref{gen-cond})
 translates to
$$
\bigl(\theta(\al), \bt\bigr)=\bigl(\al,\theta(\bt)\bigr), \quad \forall \al, \bt\in \hat\Pi.
$$
It means that  $\theta$ is a symmetric operator. Therefore it is an involutive isometry.

Conversely, suppose that  $\tau$ is  involutive, coincides with $-w_\l$ on $\hat\Pi_\bullet$, and the composition $\theta=-w_\l\circ \tau$ is involutive and orthogonal.
Then
$$
(\al,\mu)=\bigl(\theta (\al), \mu\bigr)=\bigl(\al,\theta(\mu)\bigr)=-\bigl(\al,w_\l\circ\tau(\mu)\bigr)=-(\al,\tilde \mu)
$$
 for all $\al\in \hat\Pi_\bullet $ and $\mu\in \Pi_\circ$.
Thus, the condition (\ref{1nd-cond}) is fulfilled, and $\tau$ commutes with $w_\l$, by Lemma \ref{high-low} i) and  Lemma \ref{tau-commutes w}.
Since  $\theta=-w_\l\circ \tau$ is orthogonal and involutive,  (\ref{gen-cond}) holds true either.
\end{proof}
\noindent

Remark that for regular $\l$ our requirement for roots $\al\in \hat\Pi_\circ$ and $\tilde \al=-\theta(\al)$
to be of  the same parity is redundant once $\theta$ satisfies
(\ref{gen-cond}). It is then automatically fulfilled  because  $\theta$ is an isometry.

Suppose that the pair $(\hat\Pi_\bullet,\tau)$  solves the system (\ref{1nd-cond}--\ref{2nd-cond}).
Denote by $\hat\c\subset \hat\h$ the centralizer of $\l$ in $\hat\h$ and by  $\Omega\subset \hat \Pi_\circ$ the subset of even roots fixed by $\tau$
and orthogonal to $\hat \Pi_\bullet$. Equivalently, $\Omega $ consists of even $\tau$-fixed white roots such that the $\l$-modules $V^\pm_\al$ are trivial.
Fix a map
$$
\hat \Pi_\circ \ni \al\mapsto u_\al\in \hat\c,
\quad
u_\al=0, \quad  \forall \al \not\in \Omega.
$$
For each $\al\in \hat \Pi_\circ $ pick  $c_\al\in \C^\times $, $\grave c_{\al}\in \C$ and put
\begin{equation}
\begin{array}{rcl}
y_\al &=& h_\al - h_{\tilde \al}, \\
x_\al &=& e_\al + c_\al f_{\tilde \al} + \grave c_{\al} u_\al,
\end{array}
\label{gen-spher-superpairs}
\end{equation}
for all $\al\in \hat\Pi_\circ $.
Define a Lie superalgebra $\k\subset \hat\g$ as the one generated by $\l$ and by $x_\al$, $y_\al$ with $\al\in  \hat \Pi_\circ$.
Remark that the elements $y_\al$ are presentable as  $y_{\al}=h_\al + h_{\theta( \al)}= h_\al - h_{\tau( \al)}\mod \l$, which
will be convenient for calculations. Since $\tau$ is involutive, $y_\al=-y_{\tau(\al)}\mod \l$.
\begin{definition}
  The pair of  Lie  superalgebras $\k\subset \hat \g$ determined by a super-symmetric triple $(\hat \g,\l,\tau)$ and by the set of
  $c_\al, \grave{c}_\al,u_\al$, $\al \in \hat\Pi_\circ$, is called super-symmetric.
\end{definition}
\noindent
The  complex numbers $c_\al,\grave c_{\al}$ in (\ref{gen-spher-superpairs}) are called mixture parameters.
Typically $u_\al$ is chosen equal to $h_\al$, for $\al\in \Omega$. With this convention, we may think that
$\k$ depends on $c_\al,\grave c_{\al}$. Setting all $\grave c_{\al}$ to zero, we obtain a subfamily of algebras parameterized
by a point $\vec c= (c_\al)_{\al\in \hat\Pi_\circ}$ of an algebraic  torus $\mathfrak{T}$. We call this subfamily toric.

 \begin{propn}
\label{ps-sym=sup-sph}
  A Lie superalgebra $\k$ determined by a super-symmetric triple and a vector of mixture parameters is spherical in $\hat \g$.
\end{propn}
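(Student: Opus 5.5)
We need to show $\hat\g=\k+\hat\b$, where $\hat\b=\hat\h\oplus\hat\g_+$ is the affine Borel subalgebra containing $\hat\h$. Since $\hat\b$ already contains $\hat\h$ and all positive root spaces, it suffices to show that every negative root space $\hat\g_{-\bt}$, $\bt\in\hat\Rm^+$, lies in $\k+\hat\b$. We argue by induction on the height of $\bt$ with respect to $\hat\Pi$, proving the stronger statement that $\hat\g_-\subset\k+\hat\b$. The Cartan part $\hat\h$ is already in $\hat\b$, so the elements $y_\al$ and the terms $\grave c_\al u_\al\in\hat\c\subset\hat\h$ are harmless modulo $\hat\b$. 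Hence only the root-vector parts of the generators matter.

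The first step is the base of the induction: the simple negative root vectors. For $\al\in\hat\Pi_\bullet$ we have $f_\al\in\l\subset\k$. For $\al\in\hat\Pi_\circ$ we use $x_{\tau(\al)}$ instead of $x_\al$. By definition $\tilde\mu=w_\l\tau(\mu)$ is the highest weight of $V^+_{\tau(\mu)}$, so for $\mu=\tau(\al)$ we get $\tilde\mu=w_\l(\al)$, the highest weight of $V^+_\al$, using that $\tau$ is an involution by Proposition \ref{tau-invol-orth}. Then
$$
x_{\tau(\al)}=e_{\tau(\al)}+c_{\tau(\al)}f_{\widetilde{\tau(\al)}}+\grave c\,u \equiv c_{\tau(\al)}f_{w_\l(\al)} \mod \hat\b.
$$
Since $c\neq0$, the lowest vector $f_{w_\l(\al)}$ of the $\l$-module $V^-_\al$ lies in $\k+\hat\b$. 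By Lemma \ref{high-low}, $w_\l$ swaps the extreme weights, so $f_{w_\l(\al)}$ generates $V^-_\al$ as an $\l$-module.

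The key observation is that $\k+\hat\b$ need not be a subalgebra, but it is stable under $\ad\l$. The reason is that $\l\subset\k$, and $\l$ is generated by $e_\bt,f_\bt$ with $\bt\in\hat\Pi_\bullet$; these preserve $\k$, and they also preserve $\hat\b$ up to terms in $\k+\hat\b$. Concretely, $[f_\bt,\hat\g_+]\subset\hat\g_+ +\hat\h+\C f_\bt$, and $f_\bt\in\l\subset\k$. Therefore applying $\ad\l$ to $f_{w_\l(\al)}$ yields all of $V^-_\al\subset\k+\hat\b$, and in particular $f_\al\in\k+\hat\b$.

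The second step is the induction itself. For a general negative root vector, write $f_{\bt}$ with $\bt$ non-simple as a sum of brackets $[f_\al,f_{\bt-\al}]$ with $\al$ simple, which is possible since $\hat\g_-$ is generated by the $f_\al$. By induction $f_{\bt-\al}=k+b$ with $k\in\k$ and $b\in\hat\b$. We also have $f_\al=k'+b'$ modulo lower terms, with $k'\in\k$ a generator ($f_\al$ itself, or $c^{-1}x_{\tau(\al)}$ modulo the $\l$-action). Expanding the bracket gives
$$
[f_\al,f_{\bt-\al}]=[k',k]+[k',b]+[b',f_{\bt-\al}].
$$
The term $[k',k]$ lies in $\k$. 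The term $[b',f_{\bt-\al}]$ with $b'\in\hat\b$ lands in root spaces of weight higher than $-\bt$. The term $[k',b]$ is of the same kind after rewriting $k'$ as $f_\al$ plus $\hat\b$-terms. Those higher-weight contributions are either in $\hat\b$ or are negative root vectors of smaller height, hence in $\k+\hat\b$ by the induction hypothesis. To make this rigorous, filter $\hat\g$ by height and prove that $\hat\g_{-\bt}\subset\k+\hat\b+\sum_{\mathrm{ht}(\gamma)<\mathrm{ht}(\bt)}\hat\g_{-\gamma}$. The main obstacle I expect is controlling $[x_{\tau(\al)},\cdot\,]$: the $e_{\tau(\al)}$ component may lower the height arbitrarily, but it always raises the weight, so the filtration by height of the negative part still decreases. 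Imaginary root spaces $\hat\g_{-k\dt}$ are handled identically, since the height argument does not distinguish real and imaginary roots. Parity plays no role beyond sign bookkeeping.
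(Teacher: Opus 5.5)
Your argument is correct and is essentially what the paper has in mind. The paper gives no independent proof; it defers to the finite-dimensional argument of \cite{AMS1} and asserts that it carries over to $\hat\g$. Your two steps are exactly that transfer: first the $\ad\l$-stability of $\k+\hat\b$, applied to $c\,f_{\widetilde{\tau(\al)}}\equiv x_{\tau(\al)}$ mod $\hat\b$; then the height induction using the exact decomposition $f_\al=k'+b'$, so that $[k',b]=[f_\al,b]-[b',b]$ and your closing worry about brackets with $x_{\tau(\al)}$ never arises. The one input to state more carefully is that $f_\al\in\ad(U(\l))f_{w_\l(\al)}$: this follows from the irreducibility of $V^-_\al$ asserted in the paper (Lemma \ref{high-low}~i)), not merely from $w_\l$ swapping its extreme weights.
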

\begin{proof}
See \cite{AMS1} for a special finite dimensional case. It is suitable for $\hat \g$ as well.
\end{proof}
\noindent
The rest of the paper is an answer to the question when the subalgebra $\k\subset \hat\g$ is proper.
For the sake of simplicity, we will consider only  the toric parametrization of $\k$, with a few exceptions in
Section \ref{Sec_waif_inv}.
 \subsection{Decorated diagrams and selection rules}
\label{SecDDD-SR}
Like in the non-graded case the permutation $\tau$ entering a super-symmetric triple $(\hat \g,\l,\tau)$
 can be conveniently visualized via decorated Dynkin diagrams (DDD).
We use black colour for nodes in $\hat \Pi_\bullet$ and white for $\hat \Pi_\circ$ regardless of their parity.
The parity  will be  either described in words or via the following convention:
a circle $\>
\begin{picture}(5,10)
\put(0.5,3){\circle{3}}
\end{picture}
$ designates an even root while square
 $
\begin{picture}(5,10)
\put(0,1.5){\framebox(3,3)}
\end{picture}
$ stands for odd; a rhombus  $
\begin{picture}(5,10)
\put(0,.5){$\scriptstyle\lozenge$}
\end{picture}
$
 means  a root of  arbitrary parity.
The number $|\hat \Pi_\bullet|$ of black nodes in the diagram is called its black rank. The cardinality $|\hat \Pi_\circ|$ is called its white rank.

As the subalgebra $\k$ is defined by a set of generators, it is not {\em a priory} obvious when it is proper, i.e. strictly less than $\hat \g$.
Otherwise $\k$ is not interesting, and such pairs $(\hat \g,\k)$ are called trivial.
Below we formulate criteria that rule out DDD  giving rise to trivial pairs  for all values
of the mixture parameters. Such diagrams are considered as trivial and should be discarded.

Selection rules that filter out trivial DDD were formulated for finite dimensional $\g$ with the minimal symmetric polarization in \cite{AMS1} and they turn out be sufficient  for any choice of Borel subalgebra in $\g$, \cite{AMS2}.
We recall  them without proof for reader's convenience. In a subsequent section, we work out an inductive description of admissible DDD without appealing to the selection rules.

It is convenient for the study of DDD to reduce them to smaller parts.
Let $C(\al)$ denote  the union of connected components of  $D_\l\subset D_\g$ that are  linked to  $\{\al,\tau(\al)\}\subset \hat \Pi_\circ$.
 \begin{definition}
A decorated Dynkin  sub-diagram is a subgraph $S'$ in the total decorated diagram $S$ such that $\tau(\al)\in S'$ and $C(\al)\subset S'$ for every white node $\al \in S'$.
\end{definition}
\noindent
In particular, one can consider sub-diagrams with zero number of white roots. They comprise connected components $D_\l$ and their unions.

For instance, the graph $\Sg(\al)$ with nodes $C(\al)\cup \{\al,\tau(\al)\}$ is the minimal decorated sub-diagram that includes $\al\in \hat \Pi_\circ$.
 More generally, $\Sg(\al_1,\ldots, \al_k)$ will designate the decorated sub-diagram generated by $\al_1,\ldots, \al_k\in \hat \Pi_\circ$.
It is the union of $\al_i,\tau(\al_i)$ and $C(\al_i)$, over $i=1,\ldots, k$.

Every decorated sub-diagram $S'\subset S$  defines subalgebras $\g'\subset \hat \g$ and  $\l'\subset \l$, whose simple root generators
are the nodes of $S'$ and $D_\l\cap S'$, respectively. Given
a spherical subalgebra $\k\subset \g$  determined by $S=(D_\g,D_\l,\tau)$ and a   vector $\vec c$ of  mixture parameter, we define
$\k'$ as generated by $\l'$ and by (\ref{gen-spher-superpairs}) with all white $\al \in S'$. Its vector $\vec c'$ of mixture parameter
is obtained from $\vec c$ by the natural projection.
Clearly $(\g',\k', \tau'=\tau|_{S'})$ is a spherical triple (finite dimensional once $S'\not =S$).

Thus, with a given decorated Dynkin diagram $S$, we associate a small thin category (poset) $\Cc(S)$ whose objects are decorated sub-diagrams in $S$
 with  embedding morphisms and a polyfunctor $\stackrel{\vec c}{\rightsquigarrow}$  to the category of Lie superalgebras:
$$
\begin{array}{ccc}
   S & \rightsquigarrow & \k(\vec c) \\
\uparrow &&\downarrow\\
S' &\rightsquigarrow  & \k'(\vec c')
\end{array}
$$
The union operation $S',  S''\mapsto S'\boxplus S''=S'\cup S''$
is a binary functor $\Cc(S)\times \Cc(S)\to \Cc(S)$.
Category $\Cc(S)$ will be a key tool of our analysis.

The next statement is a supersymmetric rectification of the only  non-graded selection rule from  \cite{RV}.
\begin{lemma}
\label{non-grad-sel-rule}
Suppose that a decorated Dynkin diagram is such that
\be
\label{RVSR}
\Sg(\bt) \simeq
\begin{picture}(35,10)
\put(2,1){$\scriptstyle\blacklozenge$}
\put(27,1){$\scriptstyle\lozenge$}
 \put(7,3){\line(1,0){21}}
 \put(1,7){$\small \al$} \put(30,7){$\small \bt$}
 \end{picture}
\ee
 for some $\bt\in \hat  \Pi_\circ $.
 Then  $\k=\hat \g$  unless $\bt$ is odd and $\al$ is even.
 \end{lemma}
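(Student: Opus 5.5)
The plan is to work inside the minimal decorated sub-diagram $\Sg(\bt)$ and show that $\k$ contains the simple root vectors $e_\bt$ and $f_\bt$. Together with $\l$, this gives the full contragredient subalgebra $\g'$ generated by $\al,\bt$. I then propagate along $\hat\Pi$ to obtain $\k=\hat\g$.

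\textbf{Identifying $\tilde\bt$ and $x_\bt$.} By hypothesis $\tau(\bt)=\bt$ and $C(\bt)=\{\al\}$. The $\l$-component through $\al$ is generated by $e_\al,f_\al$ alone, so $V^+_\bt$ is two-dimensional with weights $\bt$ and $\bt+\al$. By Lemma \ref{high-low}, $\tilde\bt=w_\l(\bt)=\al+\bt$, hence
\[
x_\bt=e_\bt+c_\bt f_{\al+\bt}, \qquad y_\bt=h_\bt-h_{\al+\bt}=-h_\al .
\]
Here I normalize $e_{\al+\bt}=[e_\al,e_\bt]$ and $f_{\al+\bt}=[f_\bt,f_\al]$. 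Since $\al-\bt$ is not a root, $[e_\al,f_{\al+\bt}]$ is a nonzero multiple $c'f_\bt$. This gives
\[
z_\bt:=[e_\al,x_\bt]=e_{\al+\bt}+c_\bt c' f_\bt\in\k .
\]

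\textbf{A Cartan element and separation by weights.} Next compute $[x_\bt,z_\bt]$. The terms $[e_\bt,e_{\al+\bt}]$ and $[f_{\al+\bt},f_\bt]$ vanish, since $\al+2\bt$ is not a root in a single-link situation, apart from the odd-$\bt$ case where $[e_\bt,e_\bt]$-type terms appear and must be tracked. What remains is
\[
[x_\bt,z_\bt]=c_\bt\bigl(c'\,[e_\bt,f_\bt]\pm[f_{\al+\bt},e_{\al+\bt}]\bigr)=c_\bt\bigl(c'h_\bt\mp h_{\al+\bt}\bigr).
\]
Modulo $h_\al\in\l$, this is a nonzero multiple of $h_\bt$ provided the coefficients do not cancel; this is where the parity enters. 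So $h_\bt\in\k$. The two summands of $x_\bt$ have $\mathrm{ad}\,h_\bt$-eigenvalues $(\bt,\bt)$ and $-(\bt,\al+\bt)$. If these differ, $e_\bt$ and $f_{\al+\bt}$ lie in $\k$ separately, and then $f_\bt\in\k$ via $[e_\al,f_{\al+\bt}]$. The expected failure is exactly $\bt$ odd isotropic and $\al$ even. There $(\bt,\bt)=0$, and the eigenvalue $-(\bt,\al)$ is available only through $h_\al$, which acts on $e_\bt$ and $f_{\al+\bt}$ with eigenvalues $(\al,\bt)$ and $-(\al,\al+\bt)=-(\al,\al)-(\al,\bt)$. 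In the remaining cases I expect one of $h_\al$ or $h_\bt$ to separate the two weights; I will check this case by case ($\al$ even or odd, $\bt$ even or odd, isotropic or not).

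\textbf{Globalizing, and the main obstacle.} To pass from $e_\bt,f_\bt\in\k$ to $\k=\hat\g$, I would argue along the lines of the finite dimensional case in \cite{AMS1}. For a white $\gamma$ linked to $\bt$, the brackets $[f_\bt,x_\gamma]$ and $[e_\bt,x_\gamma]$ together with ad of the available Cartan elements should split $x_\gamma$ into $e_\gamma$ and $f_{\tilde\gamma}$. Iterating over the connected diagram then yields all Chevalley generators of $\hat\g$. I expect the main obstacle to be the case analysis in the Cartan step: showing that the Cartan element is nonzero modulo $h_\al$ and that the weights separate in every parity configuration except $\bt$ odd, $\al$ even. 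The affine case brings nothing new here, because $\Sg(\bt)$ is finite dimensional.
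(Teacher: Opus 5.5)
Your plan follows essentially the paper's route. The paper gives no new argument here; it reduces to the finite-dimensional computation of \cite{AMS1} inside $\Sg(\bt)$. As it remarks after Proposition~\ref{violating SR implies triviality}, it also treats $e_\bt,f_\bt\in\k$ for a white $\bt$ as the certificate that $\k=\hat\g$, which is your local step followed by propagation.

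One correction to your case analysis. With your normalizations and $[e_\gamma,f_\gamma]=h_\gamma$, one gets modulo $h_\al$
\[
[x_\bt,[e_\al,x_\bt]]\equiv c_\bt(\al,\bt)\bigl((-1)^{\bar\al\bar\bt}+(-1)^{\bar\al+\bar\bt}\bigr)h_\bt .
\]
So $h_\bt$ appears only when $\al$ and $\bt$ are both even. Odd $\al$ is excluded outright by (\ref{1nd-cond}), which here reads $2(\al,\bt)+(\al,\al)=0$. The exceptional case survives because this coefficient vanishes and $h_\al$ cannot separate the two summands of $x_\bt$, not because $(\bt,\bt)=0$.
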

\begin{lemma}
\label{isolated odd}
  Suppose that a decorated Dynkin diagram contains a sub-graph isomorphic to
\be
\label{ISO-ODD}
\begin{picture}(35,10)
\put(2,1){$\scriptstyle\lozenge$}
\put(30.5,1.5){\framebox(3,3)}
\multiput(7,3)(6,0){4}{\line(1,0){3}}
 \put(1,7){$\small \al$} \put(30,7){$\small \bt$}
 \end{picture}
\ee
 where  $\{\bt\} =\Sg(\bt)$ is an isotropic odd node, and
  $(\bt,\al)\not =0$.  Then $\k=\hat \g$.
\end{lemma}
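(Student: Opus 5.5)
The plan is to show that $\k$ contains the simple root vectors $e_\bt$ and $f_\bt$. Propagating this along links through the connected Dynkin diagram then forces $\k=\hat\g$.

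First I unpack the hypothesis $\Sg(\bt)=\{\bt\}$. It says that $\tau(\bt)=\bt$ and that $\bt$ is linked to no black node, so $C(\bt)=\varnothing$. Hence $V^\pm_\bt$ are trivial $\l$-modules, $\tilde\bt=w_\l\tau(\bt)=\bt$, and
$$
x_\bt=e_\bt+c_\bt f_\bt+\grave c_\bt u_\bt, \qquad y_\bt=h_\bt-h_\bt=0 .
$$
Since $\bt$ is odd, it does not lie in $\Omega$, so $u_\bt=0$ and $x_\bt=e_\bt+c_\bt f_\bt$.

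Next I take the supercommutator. Because $\bt$ is isotropic, $[e_\bt,e_\bt]=0=[f_\bt,f_\bt]$, and therefore
$$
[x_\bt,x_\bt]=2c_\bt[e_\bt,f_\bt]=2c_\bt h_\bt\in\k .
$$
As $c_\bt\neq 0$, this gives $h_\bt\in\k$.

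The key step is to apply $\ad h_\bt$ to $x_\bt$, which yields $(\bt,\bt)(e_\bt-c_\bt f_\bt)$. This vanishes, so instead I use the neighbour $\al$ with $(\al,\bt)\neq 0$ and split into two cases.

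If $\al$ is black, then $e_\al,f_\al,h_\al\in\l\subset\k$. But $\al$ black would put it in $C(\bt)$, contradicting $\Sg(\bt)=\{\bt\}$; so this case cannot occur unless $(\al,\bt)=0$.

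Hence $\al$ is white, and I consider $x_\al=e_\al+c_\al f_{\tilde\al}$ (plus possibly a Cartan term). Bracketing with $h_\bt$ gives
$$
[h_\bt,x_\al]=(\bt,\al)e_\al-c_\al(\bt,\tilde\al)f_{\tilde\al}+0 .
$$
Combining this with $x_\al$ itself (Cartan terms commute with $h_\bt$) separates $e_\al$ and $f_{\tilde\al}$ whenever $(\bt,\al)\neq -(\bt,\tilde\al)$. If instead they coincide, I use $y_\al=h_\al-h_{\tilde\al}$. Then $[y_\al,x_\bt]=(\al-\tilde\al,\bt)e_\bt-c_\bt(\al-\tilde\al,\bt)f_\bt$. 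Together with $x_\bt$, this yields $e_\bt$ and $f_\bt$ individually provided $(\al-\tilde\al,\bt)\neq 0$.

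Now I check that the two degenerate alternatives cannot both hold. Suppose both $(\bt,\al+\tilde\al)=0$ and $(\bt,\al-\tilde\al)=0$. Adding them gives $(\bt,\al)=0$, a contradiction. So one of the two routes always works. Either $e_\bt,f_\bt\in\k$ directly, or $e_\al,f_{\tilde\al}\in\k$, and in the latter case $[e_\al,[f_{\tilde\al},\cdot]]$-type brackets with $x_\bt$ again separate $e_\bt$ from $f_\bt$ via a nonzero weight test.

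Once $e_\bt,f_\bt\in\k$ (and $h_\bt$), I bracket $e_\bt$ with $x_\al$. This gives $[e_\bt,e_\al]+c_\al[e_\bt,f_{\tilde\al}]$, and $f_\bt$ likewise. Together with the generated Cartan elements $h_\bt,y_\gamma$, the weight-separation argument (distinct $h_\bt$-eigenvalues $(\bt,\cdot)$) lets me split the mixed generators $x_\gamma$ into their $e$ and $f$ parts, spreading along the connected diagram. Thus $\k$ contains all $e_\gamma,f_\gamma$ for $\gamma\in\hat\Pi$, so $\k=\hat\g$.

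I expect the main obstacle to be this propagation step. Near $\bt$, eigenvalues of $h_\bt$ may coincide on the two halves of some $x_\gamma$ (orthogonal nodes), and there one must use the previously obtained $e_\al,f_\al$ and the $y$'s inductively. I would organise it as an induction on distance from $\bt$ in the diagram, using the already established vectors at each step, exactly as in the analogous finite-dimensional rule of \cite{AMS1}.
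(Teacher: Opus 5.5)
The paper gives no argument for this lemma; it lists it as a selection rule and, at Proposition~\ref{violating SR implies triviality}, defers to the finite-dimensional computation in \cite{AMS1}. Your direct computation starts correctly: $x_\bt=e_\bt+c_\bt f_\bt$, $y_\bt=0$, $h_\bt\in\k$ from $[x_\bt,x_\bt]=2c_\bt h_\bt$, and $\al$ must be white. However, the step meant to put $e_\bt,f_\bt$ into $\k$ has a genuine gap.

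First, your $y_\al$-route never applies. Taking $\mu=\bt$, $\nu=\al$ in (\ref{2nd-cond}) and using $\tilde\bt=\bt$ gives $(\bt,\al-\tilde\al)=0$, so $[y_\al,x_\bt]=0$ identically. Equivalently $(\bt,\tilde\al)=(\bt,\al)$, so the $\ad h_\bt$-eigenvalues $\pm(\bt,\al)$ of $e_\al$ and $f_{\tilde\al}$ always differ, and $e_\al,f_{\tilde\al}\in\k$ always holds.

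So everything rests on the branch where you only know $e_\al,f_{\tilde\al}\in\k$, and the bracket you propose there vanishes in general. Since $\tilde\al-\bt$ and $\al-\bt$ have coefficients of both signs, $[f_{\tilde\al},e_\bt]=0=[e_\al,f_\bt]$. Hence $[e_\al,[f_{\tilde\al},x_\bt]]=c_\bt[[e_\al,f_{\tilde\al}],f_\bt]$.
\begin{itemize}
\item If $\tau(\al)\neq\al$, then $[e_\al,f_{\tilde\al}]=0$.
\item If $\tau(\al)=\al\neq\tilde\al$, then $[e_\al,f_{\tilde\al}]$ lies in $\l$ with weight in $-\Z_{\geqslant0}\hat\Pi_\bullet$. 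It brackets to zero with $f_\bt$, because $\bt$ is not linked to $\hat\Pi_\bullet$.
\item Only when $\tilde\al=\al$ do you get the nonzero element $-c_\bt(\al,\bt)f_\bt$.
\end{itemize}
In general, $[e_\al,x_\bt]=[e_\al,e_\bt]\in\k$ cannot be lowered back to $e_\bt$, because you do not have $f_\al$.

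The missing idea is to use $\tau(\al)$ as well. Both $\tilde\al-\tau(\al)$ and $\widetilde{\tau(\al)}-\al$ lie in $\Z_{\geqslant0}\hat\Pi_\bullet$, which is orthogonal to $\bt$. Therefore $(\bt,\tau(\al))=(\bt,\widetilde{\tau(\al)})=(\bt,\al)\neq0$.
\begin{itemize}
\item The same $\ad h_\bt$-separation applied to $x_{\tau(\al)}$ gives $e_{\tau(\al)}\in\k$, hence $e_{\tilde\al}\in U(\l)e_{\tau(\al)}\subset\k$.
\item Then $[e_{\tilde\al},x_\bt]=[e_{\tilde\al},e_\bt]$.
\item $[f_{\tilde\al},[e_{\tilde\al},e_\bt]]$ is a nonzero multiple of $(\tilde\al,\bt)e_\bt=(\al,\bt)e_\bt$. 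So $e_\bt\in\k$, and then $f_\bt\in\k$.
\end{itemize}

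Your final propagation paragraph also does not work as written. $h_\bt$ kills the root vectors of all nodes not adjacent to $\bt$, so $h_\bt$-eigenvalues cannot separate any mixed generator further out. You need one of two things:
\begin{itemize}
\item the principle the paper uses to certify triviality, namely that a single white pair $e_\gamma,f_\gamma\in\k$ already forces $\k=\hat\g$ (see the remark after Proposition~\ref{violating SR implies triviality}); or
\item a genuine induction driven by the root vectors already captured, not by $h_\bt$.
\end{itemize}
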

\noindent
Let us emphasize that this lemma is  true for whatever the sub-diagram $\Sg(\al)$ generated by $\al$ is.
Simply put, if a DDD produces a proper $\k$, then its  $\tau$-fixed isotropic odd white node cannot be isolated from $\hat \Pi_\bullet$ but linked to its complement in
$\hat \Pi_\circ$.
\begin{lemma}
  \label{le-b-d}
Suppose that decorated Dynkin diagram contains a sub-graph isomorphic to
\be
\label{4NODES}
\begin{picture}(350,15)
\put(160,3){\circle*{3}}
\put(161.5,3){\line(1,0){27}}

\put(188.5,1.5){\framebox(3,3)}

\put(191.5,3){\line(1,0){24}}
\put(217,3){\circle*{3}}
\put(240,1){$\scriptstyle\lozenge$}

\multiput(217,3)(6,0){4}{\line(1,0){3}}

\put(155,8){$\al$}
\put(185,8){$\bt$}

\put(212,8){$\gm$}
\put(242,8){$\sigma$}
 \end{picture}
\ee
where $\Sg(\bt)=\{\al,\bt,\gm\}$ and $(\gm,\si)\not=0$. Suppose that $\tau(\si)=\si$ and  $\al\not \in \Sg(\si)$. Then $\k=\hat \g$.
\end{lemma}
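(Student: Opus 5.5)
The plan is to show that $\k$ contains a root vector $e_\si$ (or $f_\si$) of a white node linked to $\gm$, and then use this to generate too much. Lemma \ref{non-grad-sel-rule} and Lemma \ref{isolated odd} are proved by exhibiting such elements. We work inside the sub-diagram $S'=\Sg(\bt)\boxplus\Sg(\si)$, using the category $\Cc(S)$. If the subalgebra $\k'$ of $S'$ already contains all root vectors of the nodes of $S'$, then its image under the functor forces enough of $\hat\g$ into $\k$. From there the full generation argument is standard.

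\textbf{Step 1: the element $x_\bt$.} Since $\Sg(\bt)=\{\al,\bt,\gm\}$, the node $\bt$ is $\tau$-fixed. The $\l$-modules $V^\pm_\bt$ are two-dimensional over each of the black $\g\l(1|1)$-free components $\{\al\}$ and $\{\gm\}$, so they are $\C^2\tp\C^2$. Then $\tilde\bt=\al+\bt+\gm$, and
$$
x_\bt=e_\bt+c_\bt f_{\al+\bt+\gm}.
$$
We commute $x_\bt$ with $e_\al$ (or $f_\al$) and with $e_\gm$ inside $\k$. Because $\bt$ is isotropic odd, $(\bt,\bt)=0$, and the brackets $[e_\al,[e_\gm,x_\bt]]$ and related elements isolate pieces of the form $f_{\bt}$ and $e_{\al+\bt+\gm}$ up to scalars. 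These can be combined with the $y$-elements.

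\textbf{Step 2: bring in $\si$.} Since $\tau(\si)=\si$ and $\al\notin\Sg(\si)$, the element $x_\si=e_\si+c_\si f_{\tilde\si}$ has $\tilde\si$ not involving $\al$. Hence its bracket with $e_\al$ vanishes, while its bracket with $e_\gm$ (or $f_\gm$) does not, because $(\gm,\si)\ne0$. Consider the element $[x_\bt,x_\si]$ and its further commutators with $e_\al$ and $f_\al$. Their weights are constrained by the support of $\al$: any weight containing $\al$ must come from the $\bt$-part alone. This separates the two summands of $x_\bt$. The aim is to show that $\k$ contains $e_{\al+\bt+\gm+\si}$ or a similar pure root vector. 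Acting by $\l$ then lets us descend to a pure $e_\bt$ or $f_{\tilde\bt}$, and hence to $e_\bt\in\k$ and $f_\bt\in\k$ separately.

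\textbf{Step 3: conclude.} Once $e_\bt$ and $f_{\tilde\bt}$ lie in $\k$ separately, $[e_\bt,f_\bt]$ and the $\l$-action produce all of $\hat\h$. Then every $x_\mu$ splits into its two components, so $\k$ contains all Chevalley generators and $\k=\hat\g$. The main obstacle is Step 2: showing that the mixed terms do not cancel. I expect this to require a careful sign computation with $(\gm,\si)\neq0$ and the fact that $(\al,\tilde\si)=0$. The hypothesis $\al\notin\Sg(\si)$ is exactly what prevents the $\si$-generator from neutralizing the $\al$-grading that separates the components. This computation can be done in the finite dimensional sub-diagram, since $S'\ne S$ generically, or through the evaluation image $\k_z$ if $\al_0\in S'$.
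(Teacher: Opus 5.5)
The paper does not prove this lemma itself; it is recalled from the authors' earlier finite-dimensional work. So your argument must stand on its own, and it has a genuine gap at exactly the point you flag as the main obstacle: nothing in Steps 1--2 can separate the two summands of a mixed generator.

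Every generator of $\k$ is homogeneous for the grading of $\hat\g$ by $\la\mapsto\tfrac12(1+\theta)\la$. The summands $e_\mu$ and $f_{\tilde\mu}$ of $x_\mu$ have weights $\mu$ and $-\tilde\mu=\theta(\mu)$, which have the same image. Elements of $\l$ and the $y_\mu$ are homogeneous as well, and brackets preserve homogeneity. Consequently, commuting with $e_\al$, $f_\al$, $e_\gm$, $x_\si$ and observing that ``a weight containing $\al$ can only come from the $\bt$-part'' splits nothing. Already the two summands of $x_\bt$, of weights $\bt$ and $-(\al+\bt+\gm)$, differ in $\al$-content, yet $\ad e_\al$ and $\ad f_\al$ act on both at once.

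Concretely:
\begin{itemize}
\item $[e_\al,[e_\gm,x_\bt]]$ is again mixed, a multiple of $e_{\tilde\bt}+c'f_\bt$, so it isolates nothing.
\item In the matrix realization, $[x_\bt,x_\si]=0$.
\item The $y$-elements add nothing, because $y_\bt=h_\bt-h_{\tilde\bt}=-(h_\al+h_\gm)$ and $y_\si=-h_{\tilde\si-\si}$ already lie in $\l$.
\end{itemize}
Step 3 also overreaches. From $e_\bt,f_\bt$ and $\l$ you get $h_\bt$, not all of $\hat\h$, and nothing splits generators far from $\bt$. The right target is simply $e_\mu,f_\mu\in\k$ for one white node $\mu$, which is the paper's working criterion for triviality.

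The missing idea is to produce an element of $\k\cap\hat\h$ that is not $\theta$-invariant; this is where the oddness of $\bt$ is used. Put $X'=[e_\al,[e_\gm,x_\bt]]$. Since $\al+2\bt+\gm$ is not a root, $[e_\bt,e_{\tilde\bt}]=0=[f_{\tilde\bt},f_\bt]$, so $[x_\bt,X']\in\hat\h$. Pairing with $h\in\hat\h$ via the invariant form shows it is a nonzero multiple of $h_\bt+h_{\tilde\bt}$, because $x_\bt$ and $X'$ are odd. For even $\bt$ the same computation gives $h_\bt-h_{\tilde\bt}\in\l$ instead. On the shaft, with $\al,\bt,\gm=\zt_1-\zt_2,\ \zt_2-\zt_3,\ \zt_3-\zt_4$,
\[
x_\bt=e_{23}+c\,e_{41},\qquad X'=-(e_{14}+c\,e_{32}),\qquad [x_\bt,X']=-c\,(e_{11}+e_{22}+e_{33}+e_{44}).
\]
This element is central for the three-node diagram of (\ref{3NODES}), but not once $\si$ is attached.

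Indeed, $H=h_{\bt+\tilde\bt}$ acts on the summands of $x_\si=e_\si+c_\si f_{\tilde\si}$ by $(\bt+\tilde\bt,\si)$ and $-(\bt+\tilde\bt,\tilde\si)$. These differ because $(\al+2\bt+\gm,\si+\tilde\si)\neq0$. The condition $(\gm,\si)\neq 0$ puts $\gm$ into $\tilde\si$, and $\al\notin\Sg(\si)$ guarantees that $\tilde\si$ has no $\al$-component to cancel it. In the shaft case $\si=\zt_4-\zt_5$, $\tilde\si=\zt_3-\zt_k$, the pairing equals $-2(-1)^{\bar 3}$.

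Hence $e_\si$ and $f_{\tilde\si}$ lie in $\k$ separately. Then $f_\si\in\k$ by the $\l$-action on the irreducible module $V^-_\si$, and the diagram is trivial. Without this Cartan element your Step 2 has no way to finish.
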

\noindent
Let us emphasise that the graphs (\ref{RVSR}, \ref{ISO-ODD},\ref{4NODES}) are meant up to isomorphism (regardless of their orientation on the plane).
The next lemma specially addresses Dynkin diagrams with even orthogonal shape of one of its tails.
\begin{lemma}\cite{AMS1}
\label{sel-rul-d}
 Suppose that a decorated Dynkin diagram with a tail of shape  $\Dg$  contains one of the sub-diagrams
\be
\label{D-TAIL}
\begin{picture}(55,10)
\put(0,3){\circle*{3}}
 \put(1.5,3){\line(1,0){12}}
\put(13.5,1.5){\framebox(3,3)}
 \put(16.5,3){\line(1,0){12}}

 \put(30.5,3){\circle*{3}}

\put(32.5,3.5){\line(1,1){10}}\put(32.5,3.2){\line(1,-1){10}}
\put(43,14){\circle{3}}\put(43,-8){\circle{3}}

\qbezier(46,-6)(53,4)(46,13)
\put(47,11.5){\vector(-2,3){2}}\put(47,-4.5){\vector(-2,-3){2}}
\end{picture}
\quad\quad\quad
\begin{picture}(38,10)
\put(0,3){\circle*{3}}
 \put(1.5,3){\line(1,0){12}}

 \put(14,1.5){\framebox(3,3)}

\put(17.5,3.5){\line(1,1){10}}\put(17.5,3.2){\line(1,-1){10}}
\put(28,14){\circle{3}}\put(28,-8){\circle*{3}}

\end{picture}
\ee
 Then $\k=\hat \g$.
\end{lemma}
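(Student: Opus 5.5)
We plan to prove that $\k$ contains a root vector outside $\l$. Once it does, the bracket relations propagate along the connected diagram and force $\k=\hat\g$. Both sub-diagrams in (\ref{D-TAIL}) are local near a $\Dg$-tail, so by the category $\Cc(S)$ it suffices to work inside the decorated sub-diagram generated by the displayed nodes. That sub-diagram is finite dimensional: it is orthosymplectic of type $\Dg$, of small rank $4$ (respectively $3$). For a left tail we first apply the tail flip, which is intertwined by $\rho_z$, to move the situation to the right tail, exactly as in the proof of the regularity criterion. We then show that $\k'=\g'$ for the sub-diagram. Since $\k'\subset\k$ and $\g'$ contains the simple root vectors of $S'$, the tail root vectors lie in $\k$.

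For the first diagram, label the nodes $\al$ (black), $\bt$ (odd white), $\gm$ (black), and the two tail nodes $\si_\pm$ swapped by $\tau$. The node $\bt$ is $\tau$-fixed with $\tilde\bt=\al+\bt+\gm$, so $x_\bt=e_\bt+c_\bt f_{\al+\bt+\gm}$. Similarly $x_{\si_\pm}=e_{\si_\pm}+c f_{\tilde\si_\pm}$, where $\tilde\si_\pm=\gm+\si_\mp$ is the highest weight of $V^+_{\si_\mp}$. The first step is to take iterated brackets of $x_\bt$ with $e_\al,e_\gm,f_\al,f_\gm\in\l$ and to separate the components using weights. The isotropic odd node makes $[x_\bt,x_\bt]$ produce even combinations. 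The target is to isolate a pure Cartan element $y$ that is not orthogonal to $\si_\pm$, or a pure root vector $e_{\bt+\gm}$-type. A nonzero weight on $\si_\pm$ then splits $x_{\si_\pm}$ into $e_{\si_\pm}$ and $f_{\tilde\si_\pm}$ separately.

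The second diagram is handled the same way. The black tail node $\si_-$ together with $\al$ forms $\l$, and the odd fork node $\bt$ is linked to both. Then $\tilde\bt$ involves $\al$ and $\si_-$, and $\tau$ fixes the white tail node. Here the plan is to bracket $x_\bt$ with itself. The double link between the odd tail roots makes $(\bt,\bt)=0$ but $(\bt,\si_\pm)\neq0$, so this yields $h_\bt+h_{\tilde\bt}$ plus root vectors. Commuting the result with $\l$ should extract a Cartan element acting nontrivially on the white tail node, and from there we proceed as in Lemma \ref{non-grad-sel-rule}.

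The main obstacle is the explicit bracket bookkeeping with the signs $(-1)^{\bar k(\bar k+\bar m)}$. We must ensure that the extracted Cartan element does not vanish for every choice of mixture parameters $c$. We would do this in the matrix realization (\ref{roots-e}, \ref{roots-f}) on the $6$- or $8$-dimensional natural module of the sub-diagram and compute directly. Alternatively, we could cite \cite{AMS1}, where the statement is proved for the minimal symmetric polarization, and note that the argument involves only the local sub-diagram, so it transfers verbatim.
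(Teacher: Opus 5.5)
Your overall route, localizing to a decorated sub-diagram via $\Cc(S)$ and then appealing to \cite{AMS1}, is exactly the paper's. The lemma is quoted without proof, and Proposition~\ref{violating SR implies triviality} only asserts that the argument reduces to the finite dimensional case of \cite{AMS1}, Proposition 4.24, and does not depend on the polarization.

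The self-contained computation you sketch, however, would not go through as written.
\begin{itemize}
\item Brackets of $x_\bt$ with $\l$ cannot separate its two components. By (\ref{1nd-cond}), the assignment $e_\bt\mapsto c_\bt f_{\tilde\bt}$ extends to an $\l$-module isomorphism $V^+_\bt\to V^-_\bt$, so $\ad\l$ never leaves its graph, and $\h_\l$ splits no mixed generator.
\item $[x_\bt,x_\bt]$ is zero in both diagrams, because $2\bt$, $2\tilde\bt$ and $\tilde\bt-\bt$ are not roots. Here $\tilde\bt-\bt$ equals $\al+\gm$ (resp.\ $\al+\si_-$), a sum of non-adjacent simple roots. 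So this bracket cannot produce $h_\bt+h_{\tilde\bt}$, as you claim for the second diagram.
\item The sub-diagrams have rank at least $5$ and $4$, not $4$ and $3$.
\item In the second diagram the tail roots are even and unlinked. The equality $(\bt,\bt)=0$ holds simply because the odd fork node is isotropic.
\item The left-tail flip $\al_0\leftrightarrow\al_1$ does not move anything to the right tail. You do not need it anyway, since the computation is local.
\end{itemize}

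The missing step is to pair $x_\bt$ with the top of its diagonal module, $x'_\bt=\ad e_\al\,\ad e_\gm(x_\bt)\propto e_{\tilde\bt}+c'f_\bt$, with $\si_-$ in place of $\gm$ in the second diagram. The roots $\al,\bt,\gm$ (resp.\ $\al,\bt,\si_-$) generate a $\g\l(2|2)$-block on four basis vectors $v_i$. A direct check in that block shows that $[x_\bt,x'_\bt]$ is a non-zero multiple of $c_\bt H$. Here $H=\pm(h_\bt+h_{\tilde\bt})$ is the diagonal matrix equal to $1$ on these $v_i$ and $-1$ on the $v_{i'}$.

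Because $H$ is central in the block, it splits nothing in the shaft diagram on the right of (\ref{3NODES}); it is the $\Dg$-tail that makes it fatal. At this point your criterion ``$y$ not orthogonal to $\si_\pm$'' is the wrong one. Since $[y,x_\si]-\si(y)x_\si=-(\si+\tilde\si)(y)\,c_\si f_{\tilde\si}$, a Cartan element $y\in\k$ splits $x_\si$ if and only if $(\si+\tilde\si)(y)\neq0$. For example, $h_\gm\in\l$ is not orthogonal to $\si_\pm$, yet it splits nothing.

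Put the tail on the right. In the first diagram $\si_\pm+\tilde\si_\pm=\zt_{n-2}+\zt_{n-1}$, which takes the value $2$ on $H$. In the second diagram $\tilde\si_+=\si_+$ and $(2\si_+)(H)=4$. Hence $e_\si$ and $f_{\tilde\si}$ lie in $\k$, and via $\ad\l$ so do $e_\si$ and $f_\si$ for a white $\si$, for every choice of mixture parameters. This is the triviality criterion the paper uses.
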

\noindent
Note with care that, contrary to the even case, topologically isomorphic Dynkin diagrams do not imply isomorphism of the algebras they describe.
For instance   Lemma \ref{sel-rul-d} is not applicable  to diagram
$
\quad\begin{picture}(30,15)
\put(0,3){\circle{3}}
 \put(1.5,3){\line(1,0){12}}

 \put(14,1.5){\framebox(3,3)}

\put(17.5,3.5){\line(1,1){10}}\put(17.5,3.2){\line(1,-1){10}}
\put(28,14){\circle*{3}}\put(28,-8){\circle*{3}}

\end{picture}
$
despite it topologically coincides with one in (\ref{D-TAIL}). These two diagrams generate subalgebras with drastically different properties.
\begin{definition}
We call a triple  $(\hat \g,\l,\tau)$ and the corresponding decorated Dynkin diagram trivial if the subalgebra $\k$
they generate coincides with $\hat \g$ for all values of mixture parameters $c_\al\in \C^\times$, $\grave c_\al \in \C$,  $\al\in \hat \Pi_\circ$.
\end{definition}

We will say that a decorated diagram $S$ violates selection rules if either
 $S$ contains a sub-diagram (\ref{RVSR}) distinct from
$
\simeq \begin{picture}(15,10)
\put(2,3){\circle*{3}}
\put(3,3){\line(1,0){7}}
\put(10.5,1.5){\framebox(3,3)}
\end{picture}
$
 or one of the sub-graphs (\ref{ISO-ODD}), (\ref{4NODES}), (\ref{D-TAIL}).
These four lemmas can be reworded as a single statement.
 \begin{propn}
\label{violating SR implies triviality}
  A decorated Dynkin diagram is trivial if it violates selection rules.
\end{propn}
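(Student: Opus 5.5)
The statement is a repackaging of Lemmas \ref{non-grad-sel-rule}, \ref{isolated odd}, \ref{le-b-d} and \ref{sel-rul-d}, so the plan is a case check: unfold the definition of violating selection rules and match each clause to one of these lemmas. If $S$ violates selection rules, then one of four things happens. Either $S$ contains a sub-diagram $\Sg(\bt)$ of shape (\ref{RVSR}) other than the exceptional one with black even $\al$ and white odd $\bt$, or $S$ contains one of the sub-graphs (\ref{ISO-ODD}), (\ref{4NODES}), (\ref{D-TAIL}). In each case the corresponding lemma gives $\k=\hat\g$ for the given mixture data. Since that conclusion does not use any particular values of $c_\al,\grave c_\al$, it holds for all of them. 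This is exactly triviality of $S$ in the sense of the preceding definition.

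The one point to check in this translation is the exceptional diagram in the first clause. Lemma \ref{non-grad-sel-rule} excludes only the configuration where $\bt$ is odd and $\al$ is even. The parenthetical excluded picture (black circle linked to a white square) is precisely that configuration, since a circle denotes an even root and a square an odd one. So every (\ref{RVSR}) sub-diagram counted as a violation falls under the hypothesis of Lemma \ref{non-grad-sel-rule}.

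For the other three clauses, the side conditions (such as $\{\bt\}=\Sg(\bt)$ isotropic with $(\al,\bt)\neq0$, or $\tau(\si)=\si$ with $\al\notin\Sg(\si)$) are part of what is meant by containing those sub-graphs. They therefore transfer verbatim.

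The only genuine subtlety is that Lemmas \ref{non-grad-sel-rule}--\ref{sel-rul-d} were originally proved in \cite{AMS1} for finite dimensional $\g$, and the paper takes them over for $\hat\g$. If this needed justification, I would argue locally. Each proof shows that $\k$ contains some root vector $e_\gamma$ or $f_\gamma$ (for example, by commuting $x_\bt$ with $f_\al$ and $e_\al$ and using $y_\bt$ to separate weight components), and then that it contains all Chevalley generators.

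The first half takes place inside the finite dimensional Lie superalgebra $\g'$ attached to the relevant decorated sub-diagram $S'\in\Cc(S)$. Since $\k'(\vec c')\subset\k(\vec c)$ by the functoriality in $\Cc(S)$, the root vectors produced there lie in $\k$. Propagation to the whole of $\hat\g$ then uses only that $\hat\g$ is generated by $e_\al,f_\al$, $\al\in\hat\Pi$, and that the Dynkin diagram is connected. Both facts remain true in the affine setting.

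I expect the main obstacle, if one insists on rechecking, to be this propagation step near $\al_0$. There the mixed generator $x_{\al_0}$ pairs $e_{\al_0}$ with $f_{\tilde\al_0}$, where $\tilde\al_0$ may involve $\dt$. One then has to verify that the weight-separation argument via $y_\al$ still isolates components despite the degenerate form on $\hat\h^*$. I would handle this by grading with the $\dt$-coefficient (the loop degree), which the $y_\al$ need not detect but which the generators respect.
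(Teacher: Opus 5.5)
Your proposal is correct and is essentially the paper's argument: the paper also treats the proposition as a rewording of Lemmas \ref{non-grad-sel-rule}--\ref{sel-rul-d}, and it handles the affine setting only by asserting that the finite-dimensional proof of \cite{AMS1}, Proposition 4.24, carries over. One caution about your optional last paragraph: the loop grading cannot be used to split elements of $\k$, because mixed generators such as $x_{\al_0}=e_{\al_0}+c_{\al_0}f_{\tilde\al_0}$ are not homogeneous in the $\dt$-coefficient, but your main case-matching argument does not depend on that remark.
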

\noindent
The proof reduces to the finite dimensional case. It was done in   \cite{AMS1},  Proposition 4.24, for a special polarization
and it works for the general case too.

Thus a triple $(\hat\g,\l,\tau)$ (respectively, the decorated Dynkin diagram) is trivial if for each vector of mixture parameters there is
a white simple root $\al$ such that $e_\al,f_\al\in \k$.
The converse to Proposition  \ref{violating SR implies triviality} is also true. Its proof takes the rest of this presentation.
\begin{definition}
  Decorated Dynkin diagrams that obey the selection rules are called  Satake diagrams.
\end{definition}
\noindent
It is clear that if a DDD is Satake, then its every sub-diagram is Satake too.

\section{Spherical subalgebras in $\g$}
\label{Sec_Fin-Dim}
In this section we recall Satake diagrams  for finite dimensional $\g$
based on \cite{AMS2}.
The purpose of this piece is to work out a compact presentation without appealing to the selection rules,
which lead to somewhat cumbersome classification  especially for orthosymplectic $\g$.

For all $\g$, an admissible involution $\tau$ preserves the shaft part $D_\s\subset D_\g$ of the Dynkin diagram. Let $\tau_\s$ denote its restriction to $D_\s$.
We consider separately two cases: when  $\tau_\s$ is identical or not.
The second case is possible only for general linear $\g$.

\subsection{Nonidentical $\tau_\s$ (general linear $\g=\s$)}
General linear Lie superalgebras have two types of polarization, even and odd, depending on the number of odd roots in the basis $\Pi$.
If it is even,   the permutation $\tau$ is an involutive isometry on $\Pi_\circ$ (not necessarily on $\Pi$). For odd polarization, only $\theta\in \Aut(\Rm)$
is an (total) isometry, while $\tau$ is not even on $\Pi_\circ$.

Suppose first that $\tau\not =\id$. If $\Pi_\bullet=\varnothing$ and   $\rk\>\g =2m-1$, then
the Satake diagram looks as
shown on the left in (\ref{GL-I}). The nodes linked with arcs have the same parity.
The  node  $\al_m=\tau(\al_m)$ can only be even.
The sub-diagram   $D_\l\subset D_\g$ contains a connected component. If it is not empty, then the Satake diagram looks as shown on the right in (\ref{GL-I}).
\be
\begin{picture}(120,30)
\put(-2,1){$\scriptscriptstyle\lozenge$}\put(1.5,3){\line(1,0){12}}\put(13,1){$\scriptscriptstyle\lozenge$}
\put(16.5,3){\line(1,0){9}}\put(28,0){$\cdots$} \put(44.5,3){\line(1,0){9}}
\put(53,1){$\scriptscriptstyle\lozenge$}\put(56.5,3){\line(1,0){12}}\put(68,1){$\scriptscriptstyle\lozenge$}

\put(-2,27){$\scriptscriptstyle\lozenge$}\put(1.5,29){\line(1,0){12}}\put(13,27){$\scriptscriptstyle\lozenge$}
\put(16.5,29){\line(1,0){9}}\put(28,26){$\cdots$} \put(44.5,29){\line(1,0){9}}
\put(53,27){$\scriptscriptstyle\lozenge$}\put(56.5,29){\line(1,0){12}}\put(68,27){$\scriptscriptstyle\lozenge$}

\put(71.5,29){\line(1,-1){12}}
\put(71.5,3){\line(1,1){12}}\put(84.5,16){\circle{3}}

\put(-2,20){\vector(1,3){2}}\put(-1.8,11){\vector(1,-3){2}}\put(13,20){\vector(1,3){2}}\put(13.2,11){\vector(1,-3){2}}
\put(53,20){\vector(1,3){2}}\put(53.2,11){\vector(1,-3){2}}\put(68,20){\vector(1,3){2}}\put(68.2,11){\vector(1,-3){2}}
\qbezier(0,6)(-5,16)(0,26)\qbezier(15,6)(10,16)(15,26)\qbezier(55,6)(50,16)(55,26)\qbezier(70,6)(65,16)(70,26)
\put(90,15){$\scriptstyle \al_m$}
\end{picture}
\quad
\begin{picture}(80,32)
\put(-2,1){$\scriptscriptstyle\lozenge$}\put(1.5,3){\line(1,0){12}}\put(13,1){$\scriptscriptstyle\lozenge$}
\put(16.5,3){\line(1,0){9}}\put(28,0){$\cdots$} \put(44.5,3){\line(1,0){9}}
\put(53,1){$\scriptscriptstyle\lozenge$}\put(56.5,3){\line(1,0){12}}\put(68,1){$\scriptscriptstyle\blacklozenge$}
\put(70,28){\line(0,-1){5}}\put(70,4){\line(0,1){5}}
\put(68.5,11){\vdots}
\put(-2,27){$\scriptscriptstyle\lozenge$}\put(1.5,29){\line(1,0){12}}\put(13,27){$\scriptscriptstyle\lozenge$}
\put(16.5,29){\line(1,0){9}}\put(28,26){$\cdots$} \put(44.5,29){\line(1,0){9}}
\put(53,27){$\scriptscriptstyle\lozenge$}\put(56.5,29){\line(1,0){12}}\put(68,27){$\scriptscriptstyle\blacklozenge$}

\put(-2,20){\vector(1,3){2}}\put(-1.8,11){\vector(1,-3){2}}\put(13,20){\vector(1,3){2}}\put(13.2,11){\vector(1,-3){2}}
\put(53,20){\vector(1,3){2}}\put(53.2,11){\vector(1,-3){2}}
\qbezier(0,6)(-5,16)(0,26)\qbezier(15,6)(10,16)(15,26)\qbezier(55,6)(50,16)(55,26)
\put(74,14){$\scriptstyle D_\l$}
\put(53,34){$\scriptstyle \al_m$}

\end{picture}
\label{GL-I}
\ee
For an even polarization of $\l$, the  white nodes adjacent to $D_\l$ have the same parity, otherwise their parities are different.
All other pairs of nodes (isolated from $\Pi_\bullet$) connected with arcs have the same parities.

Let $\al_m$ be the white root preceding the black block, counting from the left. The involution $\theta$ acts on the weights of $V$
by the assignment $\theta(\zt_i)= \zt_{i'}$, $i\leqslant m$, and $\theta(\zt_i)=\zt_i$, $m<i<m'$.
The weights $\zt_i$ and $\zt_{i'}$ with $i\leqslant m$ have the same parity. The parity of weights $\zt_i$ with  $m<i<m'$ is arbitrary.

If $|\Pi|\geqslant 3$, $|\Pi_\bullet|=0$,  and $\al_m=\tau(\al_m)$ as in the left diagram,  then the root $\al_m$ is necessarily even.

Finally, let us comment on the special case  $|\Pi|=|\Pi_\circ|=1$. It should  be attributed to the class of diagrams with $\tau=\id$, and the only node can be of any parity.
However, unlike other diagrams with $\tau =\id$, the diagram
$
\begin{picture}(7,10)
\put(1.5,1.5){\framebox(3,3)}
\end{picture}
$
has an odd (!) adjoint invariant and no twisted, see Section \ref{Sec_twisted-shaft}.

\subsection{Identical $\tau_\s$}
\label{algorithm}
Finite dimensional orthosymplectic Satake diagrams with $\tau_\s=\id$ can be constructed via the following inductive procedure that
extends a  sub-diagram $S_\ell$ of white rank $\ell$ by a sub-diagram of white rank 1 via the operation $\boxplus$.
At each step, the initial diagram has either white or even black node on the left. This affects the type of  transition,
as explained below.

 For the base of induction we take the empty sub-diagram $\g$ of shape $\Ag$ and  a sub-diagram $S^r$
of white rank $1$ or $2$ (for shape $\Dg$) that contains the tail.
If the tail nodes are of the same colour, then we  take for $S^r$ the minimal Satake sub-diagram of positive white rank that contains $ D_\t$.
If the  tail nodes are of different colour (for shape $\Dg$), we take for $S^r$ the sub-diagram
that comprises  the white tail node and the white one in $D_\s$ that is adjacent to it. Specifically,
\begin{itemize}
  \item
For black tails: $S^r=\Sg(\al)$, where $\al$ is the rightmost white node.
\item
For white tails: $S^r=\Sg(\Pi_\t)$.
\item
For a mixed colour tail with white $\al_n$: $S^r=\Sg( \al_{n-2},\al_n)$.

\end{itemize}
We call $S^r$ the right end sub-diagram and list all admissible  $S^r$  below.
The quantity $\eps^r$ stands for the signature $C_{1',1}$ of the invariant $\g$-form $C$ for orthosymplectic $\g$ (meaning $S=S^r$).
The signs above the  tail roots indicate relation $\mu=\pm \la$ between eigenvalues of the standard invariant $\Ic$ associated with the diagram,
see Section \ref{Sec_standard}.
Big black rhombi $\blacklozenge$ designate   arbitrary finite dimensional  Dynkin sub-diagrams of black nodes whose shape is either arbitrary or clear from the context. In this section,
they are is used for the orthosymplectic shapes.
\begin{itemize}
\item $\Ag$-type (empty tail):
$\varnothing$
\item
Black tail:
$
\begin{picture}(18,10)
\put(0,0.5){$\scriptstyle\lozenge$}
\put(4,3){\line(1,0){7}}
\put(10.5,-0.5){$\blacklozenge$}
\put(-5,8){$\scriptstyle -\eps^r$}
\end{picture}
\quad
\begin{picture}(28,10)
\put(11,8){$\scriptstyle \eps^r$}
\put(2,3){\circle*{3}}
\put(4,3){\line(1,0){7}}
\put(10,0.5){$\scriptstyle\lozenge$}
\put(14,3){\line(1,0){7}}
\put(20.5,-0.5){$\blacklozenge$}
\end{picture}
$
  \item$\Bg$-shape white tail:
$
\begin{picture}(15,10)
\put(0,0.5){$\scriptstyle\lozenge$}
\put(-5,8){$\scriptstyle -\eps^r$}
\end{picture}
\quad
\begin{picture}(15,10)
\put(2,3){\circle*{3}}
\put(3,2){\line(1,0){7}}
\put(3,4){\line(1,0){7}}
\put(6.5,1){$\scriptstyle >$}
\put(11.5,0.5){$\scriptstyle\lozenge$}
\put(11,8){$\scriptstyle \eps^r$}
\end{picture}
$
 \item $\Cg$-shape white tail:
$
\begin{picture}(15,10)
\put(3.5,3){\circle{3}}
\put(-4,8){$\scriptstyle -1$}
\end{picture}
\quad
\begin{picture}(15,10)
\put(1.5,3){\circle*{3}}
\put(5.5,2){\line(1,0){7}}
\put(5.5,4){\line(1,0){7}}
\put(2.5,1){$\scriptstyle <$}
\put(13.5,3){\circle{3}}
\put(6,8){$\scriptstyle -1$}
\end{picture}
$

\item $\Dg$-shape white tail, $\tau=\id$:
$
\begin{picture}(10,28)
 \put(3,14){\circle{3}}\put(3,-8){\circle{3}}
\put(-3,20){$\scriptstyle -1$}
\end{picture}
\quad
\begin{picture}(15,28)
\put(0,3){\circle*{3}}
\put(0,3.5){\line(1,1){11}}\put(0,3.5){\line(1,-1){11}}
\put(11,13){\framebox(3,3)}\put(11,-8){\framebox(3,3)}
\put(11,13){\line(0,-1){18}}\put(14,13){\line(0,-1){18}}
\put(6,20){$\scriptstyle -1$}
\end{picture}
\quad
$
\item Mixed colour tail:
$
\begin{picture}(25,25)
\put(3,1.6){\framebox(3,3)}
    \put(6.5,3.5){\line(1,1){10}}\put(6.5,3.2){\line(1,-1){10}}
\put(17.5,14){\circle{3}}\put(17.5,-8){\circle*{3}}
\put(-3,8){$\scriptstyle -1$}
\end{picture}
\quad
\begin{picture}(50,25)
\put(0,3){\circle*{3}}
 \put(1,3){\line(1,0){12}}
\put(15,3){\circle{3}}
    \put(16.5,3.5){\line(1,1){10}}\put(16.5,3.2){\line(1,-1){10}}
\put(27.5,14){\circle{3}}\put(27.5,-8){\circle*{3}}
\put(7,8){$\scriptstyle -1$}
\end{picture}
\quad\quad
$
\item $\Dg$-shape twisted white tail ($\tau\not =\id$):
$
\begin{picture}(22,30)
 \put(0,13){$\scriptstyle\lozenge$}\put(0,-10){$\scriptstyle\lozenge$}
\multiput(2.5,-3.5)(0,4){4}{\line(0,1){3}}
\put(7.6,13){\vector(-2,3){2}}\put(7.6,-6){\vector(-2,-3){2}}
\qbezier(7,-7)(14,4)(7,14)
\put(-7,20){$\scriptstyle -1, \> even$}
\end{picture}
\quad
\begin{picture}(22,30)
\put(0,3){\circle*{3}}
    \put(1.5,3.5){\line(1,1){10}}\put(1.5,3.2){\line(1,-1){10}}
 \put(11,13){$\scriptstyle\lozenge$}\put(11,-10){$\scriptstyle\lozenge$}
\multiput(13.5,-3.5)(0,4){4}{\line(0,1){3}}
\qbezier(18,-7)(25,4)(18,14)
\put(18.6,13){\vector(-2,3){2}}\put(18.6,-6){\vector(-2,-3){2}}
\put(5,20){$\scriptstyle -1, \> odd$}
\end{picture}
$
\end{itemize}
Dashed lines mean no link if the tail nodes are even and  the double link if they are odd.
The sings lead to a matching condition between standard invariants (cf. Section \ref{Sec_standard}) of  left and right ends of an affine diagram in order to guarantee that
the affine spherical subalgebra $\k\subset \hat \g$ has an invariant in $\End(V_z)$.

Each step $S_\ell\to S_{\ell+1}=S_1\boxplus S_\ell$ is done with one of four shaft sub-diagrams $S_1$ of white rank 1,
depending on the connection colour  of (the leftmost node in) $S_\ell$:
\be
\label{shaft_transition}
\begin{picture}(4,10)
\put(1,3){\circle{3}}
\end{picture}
\quad\quad
\begin{picture}(15,10)
\put(2,3){\circle*{3}}
\put(10.5,1.5){\framebox(3,3)}
\put(3,3){\line(1,0){7}}
\end{picture}
\quad \quad
\begin{picture}(15,10)
\put(0.5,1.5){\framebox(3,3)}
\put(4,3){\line(1,0){7}}
\put(12,3){\circle*{3}}
\end{picture}
\quad\quad
\begin{picture}(25,10)
\put(2,3){\circle*{3}}
\put(3,3){\line(1,0){7}}
\put(12,3){\circle{3}}
\put(14,3){\line(1,0){7}}
\put(22,3){\circle*{3}}
\end{picture}
\ee
The transition rule is
\begin{itemize}
  \item if the leftmost node in $S_\ell$ is white, then $S_1$ is  one of the two diagrams on the left in (\ref{shaft_transition}) (with white node on the right); it is simply appended to $S_\ell$ with the appropriate links.
  \item if the leftmost node in $S_\ell$ is black, then it is identified with the rightmost black node of $S_1$, which is  one of the two  diagrams on the right
  in (\ref{shaft_transition}).
\end{itemize}
In the affine case, a similar classification holds for the left end of the diagram. The left end is appended to
the finite dimensional diagram using the same connection colour prescription.  This will exhaust affine diagrams with separable ends of shape
$\Bg,\Cg,\Dg$. Diagrams with inseparable ends
are processed directly in Section \ref{Sec_no shaft}.
\subsection{Shaft Satake diagrams}
 There are two special cases of Satake diagrams of shape $\Ag$ with $\tau=\id$,
\be
\label{3NODES}
\begin{picture}(7,10)
\put(1.5,1.5){\framebox(3,3)}
\end{picture}
\quad \quad
\begin{picture}(60,15)
\put(0,3){\circle*{3}}
\put(1.5,3){\line(1,0){27}}

\put(28.5,1.5){\framebox(3,3)}

\put(31.5,3){\line(1,0){24}}
\put(57,3){\circle*{3}}
 \end{picture}
\ee
They cannot be obtained by the above extension algorithm and are not subdiagrams
in a bigger finite dimensional Satake diagram.
\begin{propn}
  The algorithm above produces all non-trivial Satake diagrams with identical $\tau$ on $D_\s$ distinct from (\ref{3NODES}).
\end{propn}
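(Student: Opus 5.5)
The plan is to prove both directions of the statement: the algorithm produces only Satake diagrams, and every non-trivial Satake diagram with $\tau_\s=\id$ other than (\ref{3NODES}) arises from it. Both directions rest on Proposition \ref{violating SR implies triviality} and on the classification of finite dimensional Satake diagrams in \cite{AMS2}. The argument is organized by scanning the diagram from right to left. One then checks that the sub-diagrams $S_\ell$ cut off at each white node are exactly the objects of $\Cc(S)$ produced by the algorithm.

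For soundness, we verify that each right end $S^r$ in the list and each transition $S_1\boxplus S_\ell$ yields a diagram free of the forbidden subgraphs (\ref{RVSR}), (\ref{ISO-ODD}), (\ref{4NODES}), (\ref{D-TAIL}). All four patterns are local: they involve a white node, its $\tau$-image and the adjacent black components. So it is enough to inspect the new gluing region. There are four cases.
\begin{itemize}
\item Appending a white node to a white leftmost node creates no black neighbour.
\item Appending ``black--odd'' in front of a white node gives the allowed pattern of Lemma \ref{non-grad-sel-rule}: odd white, even black.
\item Identifying the black end of ``odd--black'' or ``black--white--black'' with a black leftmost node only lengthens black components of type $\Ag$. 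The configuration of Lemma \ref{le-b-d} cannot form, because the odd white node sits between two black nodes only in the excluded three-node diagram, or with a white node beyond it.
\end{itemize}
Regularity of the black components is checked against the irregularity proposition, which matters only near a $\Dg$ tail. That case is covered by the explicit list of $S^r$.

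For completeness, take a Satake diagram $S$ with $\tau_\s=\id$. Let $S^r$ be the minimal sub-diagram containing the tail, as prescribed, and match it against the list by a finite case check on the tail colours and shapes $\Bg,\Cg,\Dg$. Here Lemma \ref{sel-rul-d} excludes the bad $\Dg$ configurations, and Lemma \ref{non-grad-sel-rule} forces the parity constraints shown. Then induct on white rank. Let $\al$ be the leftmost white node of $S$ not in $S_\ell$, and consider $\Sg(\al)$. Since $\tau(\al)=\al$ on the shaft, $\Sg(\al)$ consists of $\al$ and the black $\Ag$-components adjacent to it. Lemma \ref{non-grad-sel-rule} says that a black component adjacent on one side only forces $\al$ odd and the black neighbour even, with the component of length one by the same lemma applied along the chain. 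If black components lie on both sides, Lemma \ref{le-b-d} and Lemma \ref{isolated odd} force $\al$ even with single black neighbours. Otherwise $S$ is the second diagram in (\ref{3NODES}), which is terminal since it cannot be extended: any further white node $\sigma$ linked to it would violate Lemma \ref{le-b-d}.

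The first diagram in (\ref{3NODES}) is the isolated odd node, excluded by Lemma \ref{isolated odd} as soon as anything is attached. The case analysis is split by the colour of the leftmost node of $S_\ell$, which reproduces the transition rule.

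I expect the main obstacle to be the right end classification for shape $\Dg$ with odd tail nodes and mixed colours. There the irregularity proposition and Lemma \ref{sel-rul-d} interact, and topologically equal diagrams behave differently. I would handle it by the explicit root vectors (\ref{roots-e})--(\ref{roots-f}) where the lemmas do not decide.
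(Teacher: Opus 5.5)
The paper proves this only by comparing the algorithm's output with the list of admissible diagrams already obtained in \cite{AMS2}. A self-contained derivation from the selection rules, as you propose, is a legitimate alternative. As written, however, it misses an essential input.

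\textbf{The missing ingredient.} A decorated diagram is not an arbitrary colouring with an involution: the triple $(\g,\l,\tau)$ must be super-symmetric, i.e.\ satisfy (\ref{1nd-cond})--(\ref{2nd-cond}), equivalently Proposition \ref{tau-invol-orth}. You never use these conditions. It is these conditions, not the selection rules, that bound the black components adjacent to a $\tau$-fixed shaft node to single even nodes.
\begin{itemize}
\item Your appeal to Lemma \ref{non-grad-sel-rule} ``applied along the chain'' cannot bound the length of a black component. That lemma applies only when $\Sg(\bt)$ has exactly two nodes. A decorated sub-diagram must contain all of $C(\al)$, so a longer component cannot be cut down.
\item The missing computation is short. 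Let $\al=\zt_i-\zt_{i+1}$ with $\tau(\al)=\al$, and let the black component on its right have rank $k$. Then $w_\l$ swaps $\zt_{i+1}\leftrightarrow\zt_{i+k+1}$. Whatever lies to the left of $\al$, the pairing of $\al+w_\l(\al)$ with the black root $\zt_{i+1}-\zt_{i+2}$ equals $-(-1)^{\overline{i+1}}$ for $k\geqslant 2$, and $(-1)^{\overline{i+2}}-(-1)^{\overline{i+1}}$ for $k=1$. The same holds symmetrically on the left.
\item So (\ref{1nd-cond}) forces a single even black node on each side. Only after this reduction do Lemma \ref{non-grad-sel-rule} (making $\al$ odd) and Lemma \ref{le-b-d} produce the transition diagrams (\ref{shaft_transition}). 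Note that the hypothesis $\Sg(\bt)=\{\al,\bt,\gm\}$ of Lemma \ref{le-b-d} already presupposes single black neighbours.
\item Lemma \ref{isolated odd} is irrelevant in the two-sided case. Its role is to make the isolated transition node even.
\end{itemize}

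\textbf{Consequences for soundness.} The same omission affects your soundness direction. Showing that no forbidden subgraph occurs does not show that the output is a super-symmetric, regular triple at all; this is where the parity labels of the ends and transitions come from. Moreover, obeying the selection rules is not yet non-triviality. That must be imported from \cite{AMS2} or from the invariants of Section \ref{Sec_standard}.

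\textbf{Smaller slips.}
\begin{itemize}
\item In a right-to-left induction, the next node is the white node immediately to the left of $S_\ell$, not the leftmost white node of $S$.
\item When you argue that the second diagram of (\ref{3NODES}) admits no white neighbour, Lemma \ref{le-b-d} needs $\tau(\si)=\si$. An adjacent twisted $\Dg$-tail therefore has to be excluded by the first diagram of (\ref{D-TAIL}), i.e.\ by Lemma \ref{sel-rul-d}.
\end{itemize}
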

\begin{proof}
  This results from examination of admissible diagrams found  in \cite{AMS2}.
\end{proof}
Let us give one more useful description of diagrams occurring for general linear $\g$.
\begin{itemize}
  \item All odd nodes are white.
  \item The even part of the diagram splits to an ordered sequence of connected components (possibly empty in the case
of two neighbouring odd roots). If an odd root occupies the leftmost (respectively rightmost position), we assume that the even connected component on
the left (respectively on the right) is  empty.
  \item
  Each even component  either consists of white nodes  or is an alternating diagram
$
 \begin{picture}(45,10)
\put(2,3){\circle*{3}}
\put(3,3){\line(1,0){5}}
\put(11,0){$\cdots$}
\put(26,3){\line(1,0){5}}
\put(32,3){\circle{3}}
\put(34,3){\line(1,0){7}}
\put(42,3){\circle*{3}}
  \end{picture}
$
of odd rank.
\item The types of even components (an empty component is regarded as white) must alternate.
\end{itemize}
This description results from application of (\ref{non-grad-sel-rule}), (\ref{isolated odd}) and (\ref{le-b-d}) and
alternatively, through  the above inductive algorithm.
\begin{definition}
A diagram of $\Ag$-type constructed by the  algorithm of Section \ref{algorithm}, is called shaft Satake diagram. Subalgebras
$\k$ they generate  are called shaft spherical subalgebras.
\end{definition}
\noindent
Remark that  $\begin{picture}(7,10)
\put(1.5,1.5){\framebox(3,3)}
\end{picture}
$
is not admissible Satake diagram relative to $\tau=\id$, because it separates two (empty) even components of the same type.

It is convenient to note that shaft diagrams have the following three types of  rightmost blocks:
\be
\label{rightmost_block}
 \begin{picture}(45,10)
\put(2,3){\circle*{3}}
\put(3,3){\line(1,0){5}}
\put(11,0){$\cdots$}
\put(26,3){\line(1,0){5}}
\put(32,3){\circle{3}}
\put(34,3){\line(1,0){7}}
\put(42,3){\circle*{3}}
  \end{picture}
\quad
 \begin{picture}(56,10)
\put(2,3){\circle*{3}}
\put(3,3){\line(1,0){5}}
\put(11,0){$\cdots$}
\put(26,3){\line(1,0){5}}
\put(32,3){\circle{3}}
\put(34,3){\line(1,0){7}}
\put(42,3){\circle*{3}}
\put(43.5,3){\line(1,0){7}}
\put(51.5,1.5){\framebox(3,3)}
  \end{picture}
\quad
 \begin{picture}(45,10)
\put(2,3){\circle{3}}
\put(3,3){\line(1,0){5}}
\put(11,0){$\cdots$}
\put(26,3){\line(1,0){5}}
\put(32,3){\circle{3}}
 \end{picture}
\ee
(formally speaking, the middle diagram has an empty block of the third type).
Possible leftmost blocks are obtained by vertical reflection.
Shaft sub-diagrams will serve as a bridge between the left and right ends in orthosymplectic affine Satake diagrams provided they can be separated from each other, see
Section \ref{Sec_separable}.

Since transition through an odd root changes the type of even blocks, shaft diagrams have extreme nodes of the same colour if and only if
the subalgebra $\s\subset \g$ has even polarization. This fact will be useful in the study of affine diagrams.

\subsection{Standard matrix invariants}
\label{Sec_standard}
Invariants of spherical subalgebras are of  two-fold importance. They can be used for verification if a given subalgebra is proper.
They are also classical limits of K-matrices solving Reflection equation upon quantization. Of course, not every invariant of the corresponding coideal
subalgebra solves quadratic Reflection equation. On the other hand, quantization
allows for different comultiplications and quasitriangular structures which eliminate
degeneration of the classical limit.
In this section we describe standardized invariants of finite dimensional spherical subalgebras, which will be used for their affine extensions.
For finite dimensional $S$, standard invariants can be viewed through a functor from $\Cc(S)$ to a category whose objects are
representations of the corresponding spherical pairs $(\g,\k)$ of minimal dimension for $\g$, along with
$\k$-fixed matrices.
If a diagram has  zero white rank, then  the standard invariant is taken to be zero matrix. This is consistent with triviality
of such pairs.

We do not mean to find a comprehensive list of invariants for all spherical subalgebras. Such a task is more interesting
for quantum K-matrices. It is sufficient
for our current goals to find at least some of them for a particular subalgebra or a particular diagram.
The key thing for us is their seriality with regard to the set
of Satake sub-diagrams: how they are built up with a growth of the white rank.
The total invariant is obtained by a gluing up the invariant of the smaller diagram with the invariant of the transition diagram
over overlapping blocks. This inductive  process is described  in this section.
\subsection{Adjoint general linear $\k$-invariants}
\label{Sec_ad-gl-in}
In this section we  recall  invariants of a spherical subalgebra $\k$ in general linear Lie superalgebra $\g$ corresponding to Satake diagrams (\ref{GL-I}) with $\tau\not =\id$.
We consider the adjoint action of $\k$ on $\End(V)$.

The diagrams (\ref{GL-I}) of white rank $\ell$  can be split as $S_{\ell+2}=\Sg(\al_1)\boxplus S_\ell$.
\be
\begin{picture}(110,40)
\put(20,12){$S_{\ell+2}=$}
\put(30,16){\oval(25,40)}

\put(58,1){$\scriptscriptstyle\lozenge$}\put(61.5,3){\line(1,0){12}}
\put(58,27){$\scriptscriptstyle\lozenge$}\put(61.5,29){\line(1,0){12}}
 \put(84,16){\oval(20,40)}
 \put(58,20){\vector(1,3){2}}\put(58.2,11){\vector(1,-3){2}}
\qbezier(60,6)(55,16)(60,26)

\put(78,12){$S_{\ell}$}
\put(58,34){$\scriptstyle \al_1$}
\put(105,12){with}
\end{picture}
\quad
\begin{picture}(90,40)
\put(64,22){$\scriptstyle \al_1$}
\put(30,16){\oval(25,40)}
\put(25,12){$S_{1}$}
\put(50,12){$=$}
 \put(65,15){\circle{3}}
\put(85,12){ or }
\end{picture}
\quad
\begin{picture}(80,40)
\put(30,16){\oval(25,40)}
\put(25,12){$S_{2}$}
\put(50,12){$=$}
\put(63,1){$\scriptscriptstyle\lozenge$}\put(66.5,3){\line(1,0){12}}\put(78,1){$\scriptscriptstyle\blacklozenge$}
\put(80,28){\line(0,-1){5}}\put(80,4){\line(0,1){5}}
\put(78.5,11){\vdots}
\put(63,27){$\scriptscriptstyle\lozenge$}\put(66.5,29){\line(1,0){12}}\put(78,27){$\scriptscriptstyle\blacklozenge$}
\put(63,20){\vector(1,3){2}}\put(63.2,11){\vector(1,-3){2}}
\qbezier(65,6)(60,16)(65,26)
\put(63,34){$\scriptstyle \al_1$}
\end{picture}
\ee
Empty $\Pi_\bullet$ in $S_2$ also admissible.
This process  inductively defines two series of diagrams starting with either $S_1$ or $S_2$.
In each case the transition is implemented by appending the diagram $\Sg(\al_1)$ of white rank two.

Let the rank $\rk \l$ of the black block in $S_2$ be $l-1$. The (adjoint)  standard invariants is normalized to have the zero lowest diagonal entry
(trivial scalar invariants subtracted). They are
$$
\Ic_{\ell}=
 \left(
\begin{array}{ccc}
\mu+\la &  & a_\ell\\
 & \Ic_{\ell-2} & \\
-\frac{\mu \la}{a_\ell}  &  & \\
\end{array}
\right),
\quad
\Ic_{1}=
 \left(
\begin{array}{cc}
\mu+\la & a_1\\
-\frac{\mu \la}{a_1}  & \\
\end{array}
\right),
\quad
\Ic_{2}=
 \left(
\begin{array}{ccc}
\mu+\la &  & a_2\\
 & \la 1_l   & \\
-\frac{\mu \la}{a_2}  &  & \\
\end{array}
\right).
$$
Here  $\la$ and $\mu$ are arbitrary non-zero complex numbers (the eigenvalues), $1_l$ stands for the identity matrix of size $l$.
Along with the parameter $a_\ell$,  they are related with the mixture parameters of $\k$.

Simple root vectors and mixture parameters can be chosen in the form
$$
x_k=e_{k,k+1}+c_k e_{k',(k+1)'},\quad k=1,1',\quad c_1=c_{1'}=\frac{a_{\ell}}{a_{\ell-2}},
$$
for $\al_1$ the node in the transition diagram on the left. For the two base diagrams with  $\ell=1,2$, they are
$$
x_1=e_{1,2}+c_1 e_{1',2}, \quad x_{1'}=e_{2',1'}+c_{1'} e_{2',1}, \quad \la= c_{1'} a_\ell,
\quad
\mu
=-c_1 a_\ell,
\quad \ell =2.
$$
For the diagram with $\al_1=\tau(\al_1)$  the    mixed  generator can be taken as
$$
x_1=(-1)^{\bar 1+\bar 1'}e_{1,1'}+c_1^2 e_{1',1},   \quad
\mu=-\la, \quad
\la^2=(-1)^{\bar 1+\bar 1'}a_\ell^2 c_1^2,
\quad \ell=1.
$$
One can add  a scalar matrix to the standard invariant. This will make sense in the affine setting.

Remark that transition from $\Ic_\ell$ to $\Ic_{\ell+2}$ can be described as gluing up $\Ic_{\ell}$ with the standard invariant
of the transition Satake diagram $\Sg(\al_1)$ (a matrix of size $4$). Such an interpretation is redundant in this particular situation because the transition diagram $S_1$ is unique.
But it will be of use for twisted invariants and for orthosymplectic $\g$ where the number of transition diagrams will  be  4.

\subsection{Twisted general linear  $\k$-invariants}
Spherical subalgebras of $\Ag$-shape with identical $\tau$, with only one exception,  have twisted (even) matrix invariants satisfying
\be
\label{t-twisted inv}
x\Ic+ \Ic x^t=0, \quad x\in \k.
\ee
where $t$ is the matrix super-transposition. The only exceptional case is $\k\subset \g\l(1|1)$ corresponding to left diagram in
(\ref{3NODES}). This subalgebra has an odd adjoint invariant and no twisted.
Next we  describe the inductive algorithm of constructing twisted invariants for the shaft Satake diagrams.

Each of the four elementary shaft diagrams (\ref{shaft_transition}) generates a subalgebra $\k=\k(c)$ acting
on $\C^{m_1|m_2}$. The vectors $(m_1,m_2)$ of dimensions assigned to the diagrams are
\be
\label{elem shaft diagr}
\begin{picture}(4,10)
\put(1,3){\circle{3}}
\put(-1,7){$\scriptstyle \al$}
\end{picture}
\mapsto (1,1),
\quad
\begin{picture}(15,10)
\put(2,3){\circle*{3}}
\put(10.5,1.5){\framebox(3,3)}
\put(3,3){\line(1,0){7}}
\put(10,7){$\scriptstyle \al$}
\end{picture}
\mapsto (2,1)
,\quad
\begin{picture}(15,10)
\put(0.5,1.5){\framebox(3,3)}
\put(4,3){\line(1,0){7}}
\put(12,3){\circle*{3}}
\put(0,7){$\scriptstyle \al$}
\end{picture}
\mapsto (1,2)
,
\quad
\begin{picture}(25,10)
\put(2,3){\circle*{3}}
\put(4,3){\line(1,0){7}}
\put(12,3){\circle{3}}
\put(14,3){\line(1,0){7}}
\put(22,3){\circle*{3}}
\put(10,7){$\scriptstyle \al$}
\end{picture}
\mapsto (2,2)
\ee
The mixed generator can be chosen as
$
x_\al=e_\al+(-1)^{\bar \al} c_\al f_{\tilde \al},
$
with $f_{\tilde \al}=f_{\al}$ for the first diagram,
$f_{\tilde \al}=[f_{\bt},f_\al]$ for the only black root $\bt\in \Pi_\bullet$, and
$f_{\tilde \al}=[f_{\bt_2},[f_{\bt_1},f_\al]]$ for $\bt_1,\bt_2\in \Pi_\bullet$.
With the representation
$$
f_{\al_i}\mapsto e_{i+1,i}, \quad  e_{\al_i}\mapsto (-1)^{\bar i} e_{i,i+1},
$$
the invariants are block diagonal matrices
\be
\label{shaft-trns-inv}
 \left(
\begin{array}{cc}
a_1 \varsigma_{m_1} & \\
  & a_2\varsigma_{m_2}
\end{array}
\right),
\quad a_1,a_2\in \C^\times,
\ee
where
each  matrix $\varsigma_{m_i}$ is of size $m_i$; specifically,  $\varsigma_1=1$ and $\varsigma_2=\left(
\begin{array}{ccc}
  0 & 1 \\
  -1 & 0
\end{array}
\right)
$.
The relation between $c$ and $a_1,a_2$ is then $a_1=c_\al a_2$.

Each diagram  $S_{\ell+1}$ is obtained by appending an elementary diagram $S_1$ to already constructed $S_{\ell}$ with $\ell\geqslant 0$.
The $S_\ell$ invariant is  a block diagonal matrix
$$
\Ic_\ell=
 \left(
\begin{array}{cc}
a \varsigma_{m} & \\
  & \Ic_{\ell-1}\\
\end{array}
\right)
\in \End(\C^{m}\oplus \C^{l}),
$$
where $l$ is the dimension of the natural $\k_{\ell-1}$-module.
In order to  extend $\Ic_\ell$  to $\Ic_{\ell+1}$, one should identify  its upper block with the lower $m=m_2$-dimensional block of the $S_1$-invariant and
set  $a=a_2$. Since $a_2$ can be taken arbitrary and $a_1$ is uniquely determined by $a_2$ and $c_\al$, this is doable for all values of  mixture parameters.
\subsection{Orthosymplectic  $\k$-invariants}
An orthosymplectic Satake $S_{\ell+1}$ diagram of white rank $\ell+1$ is obtained  by extension of $S_\ell$
with one of the four transition diagrams (\ref{shaft_transition}) of white rank 1, call it  $S_1$.
The base of this procedure is one of right end  diagrams $S^r$ listed in Section \ref{algorithm}.

The standard form of orthosymplectic $S_\ell$-invariant $\Ic_{\ell}$  is
\be
\label{I-osp-block}
\Ic_{\ell}=
 \left(
\begin{array}{ccc}
(\mu+\la)1_{m} &  & a_\ell \si_{m}\\
 & \Ic_{\ell-1} & \\
-\frac{\mu \la}{a_\ell}\si_{m}  &  & \\
\end{array}
\right)
\in \End(\C^{m}\oplus \C^{l}\oplus \C^{m}),
\quad
\ee
where $a_\ell\in \C^\times$ is a complex parameter depending on the vector of mixture parameters of $\k_\ell$;
$m=1,2$ and $l$ depends on $\k_{\ell-1}$.
The matrix $\si_{1}$ equals $1$ and  $\si_2$ is $\left(
\begin{array}{ccc}
  1 & 0 \\
  0 & -1
\end{array}
\right)
$

The extension of  $\Ic_\ell$ to $\Ic_{\ell+1}$ is performed by identification of the blocks
in $\Ic_\ell$ and $\Ic_1$ (the $S_1$-invariant (\ref{transition-bridge}) below) acting on the subspace
$\C^{m}\oplus \C^{m} \subset \C^{m}\oplus \C^l\oplus \C^{m}$ with $m=m_2$ and $a_\ell=a_2$.

The eigenvalues $\la+\mu$ are generally not arbitrary: some base diagrams require $\mu=\pm \la$. This dependence is displayed
in Section \ref{algorithm} and is attributed to the entire series of $\Ic_\ell$.
Further we describe the transition and base invariants for each of the diagrams in Section \ref{algorithm}.

\subsubsection{Shaft transition matrix}
Here we describe invariants of subalgebras in $\s$ generated by
(\ref{shaft_transition}) viewed  as transition diagrams in the extension algorithm.
Their twisted invariants in $\End(\C^{m_1}\oplus \C^{m_2})$ are given in (\ref{shaft-trns-inv}) with
the  vector of dimensions $(m_1,m_2)$ taken from (\ref{elem shaft diagr}).
We construct their embedding in $\End(\C^{m_1}\oplus \C^{m_2}\oplus \C^{m_2}\oplus \C^{m_1})$ as
a subspace in the basic module of orthosymplectic  $\g$.

We take for white node the root $\al=\al_i$, $1\leqslant i\leqslant n-2$ for a $\Dg$-shape diagram and $1\leqslant i\leqslant n-1$ otherwise.
 The algebra $\k=\k(c_\al)$ is generated over the Levi core (the black subalgebra) by $x_\al=e_\al+c_\al f_{\tilde \al}$.
Its  standard invariant is
\be
\label{transition-bridge}
\Ic_1=
\left(
\begin{array}{cccc}
(\mu+\la) 1_{m_1}&& & a_1 \si_{m_1} \\
 & (\mu+\la)1_{m_2}&  a_2\si_{m_2} & \\
&-\frac{\mu \la}{a_2}\si_{m_2} &&\\
-\frac{\mu \la}{a_1}\si_{m_1} &&&
\end{array}
\right),
\ee
where $c_\al=\pm\frac{a_2}{a_1}$, and  the sign depends on the diagram.
The eigenvalues $\mu,\la$ are arbitrary.
They are equated to eigenvalues of $\Ic_\ell$, while the parameter $a_2$ is identified with $a_\ell$.
There is no ambiguity in this identification because $\Ic_1$ depends on $\la$ and $\mu$ through symmetric functions.
\subsubsection{Tail base invariants}
\label{Sec-Tail-base-Inv}
All mixed generators are taken in the form
$$x_{\al_i}=e_{\al_i}+c_{\al_i}f_{\tilde \al_i}.$$
The basic representation of root vectors is as in Section \ref{Sec_Basic Rep}.
The signature of the invariant form $C$ will be presented as a sequence of signs $(\eps_1,\ldots, \eps_n)$.

\noindent
\underline{\bf Black tails.} The standard invariant $\Ic_1\in \End(\C^{m+l})$  for black tailed diagrams has the form (\ref{I-osp-block})
with $\Ic_0=\la 1_{l}$, where $l$ is the dimension of the natural $\l$-module.
The dimension $m$ equals $1$ if the the leftmost node is white and 2 otherwise.
$$
\begin{array}{rllllccc}
  \begin{picture}(18,10)
\put(1,8){$\scriptscriptstyle \al$}
\put(0,1){$\scriptstyle\lozenge$}
\put(4,3){\line(1,0){7}}
\put(10.5,-0.5){$\blacklozenge$}
\end{picture}
:
&
\mu=-\eps_1\la,
&
 a=-\eps_{2}(-1)^{\bar 1(\bar 1+\overline{2})}\la c_\al^{-1},
&
\frac{\mu \la}{a} = (-1)^{\overline{2}(\bar 1+\overline{2})}\la c_\al,
&\\[5pt]
\begin{picture}(28,10)
\put(11,8){$\scriptscriptstyle \al$}
\put(3,3){\circle*{3}}
\put(4,3){\line(1,0){7}}
\put(10,1){$\scriptstyle\lozenge$}
\put(14,3){\line(1,0){7}}
\put(20.5,-0.5){$\blacklozenge$}
\end{picture}
:
&
\mu =\la\eps_{1},
& a=-\eps_{1}(-1)^{\bar 1(\bar 1+\bar 2)}\la c_\al^{-1}
,
&
\frac{\mu \la}{a}=-(-1)^{\bar 2 (\bar 1+\bar 2)}\la c_\al.
\end{array}
$$
where $\eps_1=C_{1',1}$ is the matrix entry  of the invariant form.

\noindent
\underline{\bf White $\Bg$-tail.}
The standard invariant $\Ic_1\in \End(\C^{m+1})$  for white tail diagrams of the $\Bg$-shape has the form (\ref{I-osp-block}),
where $\Ic_0=\la $.
The dimension $m$ equals $1$ if the the leftmost node is white and  2 otherwise.
$$
\begin{array}{rllllccc}
\begin{picture}(15,10)
\put(0,0.5){$\scriptstyle\lozenge$}
\end{picture}
:
   &
\mathrm{grad}=  (\ve,0,\ve),
  &
C:\>\bigl(1,1,\eps\bigr),
&
\mu=-\eps  \la,
   &
c_\al=\frac{\mu}{a},   \\
\begin{picture}(15,10)
\put(2,3){\circle*{3}}
\put(3,2){\line(1,0){7}}
\put(3,4){\line(1,0){7}}
\put(6.5,1){$\scriptstyle >$}
\put(11.5,0.5){$\scriptstyle\lozenge$}
\end{picture}
:
&
\mathrm{grad}=  (\ve,\ve,0,\ve,\ve),
&
C:\>\bigl(1,1,1,\eps, \eps\bigr),
&
\mu=\eps \la,
&
c_\al=\frac{\mu}{a},
\end{array}
$$
with $\ve\in \{0,1\}$ and $\eps =(-1)^\ve$.

\noindent
\underline{\bf White $\Cg$-tail.}
Base diagrams with white tail of shape $\Cg$ have standard invariant $\Ic_1 \in \End(\C^{m+0})$
as  in (\ref{I-osp-block}) with  $\Ic_0=0$ and $m=2$   if the leftmost node is black and $m=1$ otherwise.
$$
\begin{array}{rllllccc}
\begin{picture}(15,10)
\put(13.5,3){\circle{3}}
\end{picture}:
&
\mathrm{grad}=  (\ve,\ve),
&
C:\>\bigl(1,\eps\bigr),
&  \mu=-\la,
&c_\al= -\frac{\mu \la}{a^2},
\\
\begin{picture}(15,10)
\put(1.5,3){\circle*{3}}
\put(5.5,2){\line(1,0){7}}
\put(5.5,4){\line(1,0){7}}
\put(2.5,1){$\scriptstyle <$}
\put(13.5,3){\circle{3}}
\end{picture}
:
&
\mathrm{grad}=  (\ve,\ve,\ve,\ve),
&
C:\>\bigl(1,1,\eps,\eps\bigr),
&  \mu=-\la,
&
 c_\al= \frac{\mu \la}{a^2},
\end{array}
$$
with $\ve\in \{0,1\}$ and $\eps=(-1)^\ve$.

\noindent
\underline{\bf Twisted white $\Dg$-tail.}
Base diagrams with twisted white tail of shape  $\Dg$  have a standard invariant $\Ic_2$ as (\ref{I-osp-block}),
where the central block is
$$\Ic_{\ell-1}=
\left(
\begin{array}{cccc}
 0&  \la  \\
\mu &0
\end{array}
\right)
$$
and $m=2$   if the leftmost node is black and $m=1$ otherwise.
It satisfies
$$
x^{\vartheta}\Ic=\Ic x, \quad x\in \g,
$$
where $x\mapsto x^\vartheta$ is conjugation with the flip $v_{n}\leftrightarrow v_{n'}$.
The trivial invariant proportional to the flip matrix is subtracted.
The generator $y_n$ is proportional to $e_{nn}-e_{n',n'}$. Since $y_n^\theta=-y_n$, the skew-diagonal central block is obviously $y_n$-invariant,
as well as the entire matrix $\Ic_\ell$.

The gradings on the underlying vector space and the invariant forms are considered to be
$$
\begin{array}{rllllccc}
\begin{picture}(10,30)
 \put(0,13){$\scriptstyle\lozenge$}\put(0,-10){$\scriptstyle\lozenge$}
\multiput(2.5,-3.5)(0,4){4}{\line(0,1){3}}
\put(7.6,13){\vector(-2,3){2}}\put(7.6,-6){\vector(-2,-3){2}}
\qbezier(7,-7)(14,4)(7,14)
\end{picture}
:
&
\mathrm{grad}=  (\ve_{n-1},\ve_n,\ve_n,\ve_{n-1}),
&
C:\>\bigl(1,1,1, \eps_{n-1}\eps_n\bigr),
\\[10pt]
\begin{picture}(22,30)
\put(0,3){\circle*{3}}
    \put(1.5,3.5){\line(1,1){10}}\put(1.5,3.2){\line(1,-1){10}}
 \put(11,13){$\scriptstyle\lozenge$}\put(11,-10){$\scriptstyle\lozenge$}
\multiput(13.5,-3.5)(0,4){4}{\line(0,1){3}}
\qbezier(18,-7)(25,4)(18,14)
\put(18.6,13){\vector(-2,3){2}}\put(18.6,-6){\vector(-2,-3){2}}
\end{picture}
:
&
\mathrm{grad}=  (\ve_{n-1},\ve_{n-1},\ve_n,\ve_n,\ve_{n-1},\ve_{n-1}),
&
C:\>\bigl(1,1,1,1,\eps_{n-1}\eps_n,\eps_{n-1}\eps_n\bigr).
\end{array}
$$
where $\ve_i\in \{0,1\}$ and $\eps_i=(-1)^{\ve_i}$. The relation between eigenvalues and parameters of $\Ic$ and $\vec c$
are as follows.

\noindent
For $n=2$, even tail or $n=3$, odd tail:
$\mu=-\la, \quad c_{n-1} = -\frac{\la}{a},
\quad
c_n =  \frac{\mu}{a}.
$

\noindent
For $n=2$, odd tail or $n=3$, even tail:
$c_{n-1} = (-1)^{\ve_{n}}\frac{\la}{a},
\quad
c_n =  (-1)^{\ve_{n} }\frac{\mu}{a}.
$

\noindent
\underline{\bf White $\Dg$-tail.}
Diagrams with white tail of shape $\Dg$ and  $\tau=\id$ have a standard invariant $\Ic_2$ as (\ref{transition-bridge})
with $m_2=1$ and  $m_1=2$   if the leftmost node is black and $m=1$ otherwise.
$$
\begin{array}{rllllccc}
\begin{picture}(3,20)
\put(0,14){\circle{3}}\put(0,-8){\circle{3}}

\end{picture}
:&
\mathrm{grad}=  (\ve,\ve,\ve,\ve),
&
C:\>\bigl(1,1,1,1\bigr),
&
\mu = -\la,
&
\\[10pt]
\begin{picture}(15,20)
\put(0,3){\circle*{3}}
\put(0,3.5){\line(1,1){11}}\put(0,3.5){\line(1,-1){11}}
\put(11,13){\framebox(3,3)}\put(11,-8){\framebox(3,3)}
\put(11,13){\line(0,-1){18}}\put(14,13){\line(0,-1){18}}
\end{picture}
:&
\mathrm{grad}=  (\ve,\ve,\ve+1,\ve+1,\ve,\ve),
&
C:\>\bigl(1,1,1,1,-1,-1\bigr),
&
 \mu = -\la
\end{array}
$$
with $\ve\in \{0,1\}$.
The relation between the parameters is
$$
c_{n-1} = -\frac{a_2}{a_1}, \quad c_n = -\frac{\la^2}{a_1a_2},
\quad\mbox{and} \mbox \quad
c_{n-1} = -(-1)^{\ve}\frac{a_2}{a_1},\quad
c_n = -(-1)^{\ve}\frac{\la^2}{a_1a_2}.
$$
respectively.

\noindent
\underline{\bf Mixed colour $\Dg$-tail.}
Base diagrams with mixed colour tail have standard invariants as (\ref{transition-bridge})
with  $m_1=2$   if the leftmost node is black and $m_1=1$ otherwise. One has
$$
\begin{array}{rllllccc}
\begin{picture}(50,10)
\put(13,1.6){\framebox(3,3)}
    \put(16.5,3.5){\line(1,1){10}}\put(16.5,3.2){\line(1,-1){10}}
\put(27.5,14){\circle{3}}\put(27.5,-8){\circle*{3}}
\put(32,13){$\scriptstyle \al_{n}$}
\put(32,-10){$\scriptstyle \al_{n-1}$}
\put(12,8){$\scriptstyle \al$}
\end{picture}
:
&
\mathrm{grad}=  (\ve+1,\ve,\ve,\ve,\ve,\ve+1),
&
C:\>\bigl(1,1,1,1,1,-1\bigr),
&
\mu = -\la,
\\[10pt]
\begin{picture}(50,20)
\put(0,3){\circle*{3}}
 \put(1,3){\line(1,0){12}}
\put(15,3){\circle{3}}
    \put(16.5,3.5){\line(1,1){10}}\put(16.5,3.2){\line(1,-1){10}}
\put(27.5,14){\circle{3}}\put(27.5,-8){\circle*{3}}
\put(32,13){$\scriptstyle \al_{n}$}
\put(32,-10){$\scriptstyle \al_{n-1}$}
\put(12,6){$\scriptstyle \al$}
\end{picture}
:
&
\mathrm{grad}=  (\ve,\ve,\ve,\ve,\ve,\ve,\ve,\ve),
&
C:\>\bigl(1,1,1,1,1,1,1,1\bigr),
&\mu = -\la,
\end{array}
$$
with $\ve\in \{0,1\}$.
The parameters of $\Ic$ and $\k$ are related by
$$
c_{n-2} = -(-1)^{\ve}\frac{a_2}{a_1},
\quad
c_n = \frac{\la^2}{a_2^2}
\quad\mbox{and} \mbox \quad
c_{n-2} = \frac{a_2}{a_1}
,
\quad
c_n = \frac{\la^2}{a_2^2},
$$
respectively.

We use these invariants of  spherical subalgebras in $\g$ to construct invariants of $\hat \g$ in   the remaining three sections of the article.
\section{General linear affine Satake diagrams}
\label{Sec_GL}

In this section, $\hat \g$ is a general linear affine Lie superalgebra with the set of simple roots
$$
\al_0, \quad \al_1, \quad \ldots,  \quad \al_n.
$$
The parity of $\al_0$ equals the parity of the root $\al_1+\ldots+\al_n$, which implies
that the number of odd simple roots  in $\hat \Pi$ is even.
The Dynkin diagram $D_{\hat \g}$ of $\hat \g$ is conveniently presented as an equilateral polygon with the center
at the origin. Then $\Aut(D_{\hat \g})$ becomes the dihedral group $\Z_{n+1}\rtimes \Z_2$.

There are the following involutive elements in $\Aut(D_{\hat \g})$:
reflections with respect to axes and the central symmetry if $n\in 2\Z+1$.
If $n$ is even, then all axes are passing through a vertex, otherwise there are $\ell=\frac{n+1}{2}$ axes passing through vertices
and $\ell$ axes connecting mid-points of opposite edges.
If $n$ is odd, then $\Aut(D_{\hat \g})$ contains the central symmetry acting  as a cyclic shift  $\al_i\mapsto \al_{i+\ell\mod n+1}$.
Big rhombi $\blacklozenge$ denote finite dimensional general linear subalgebras in $\l$  of arbitrary polarization.

\subsection{Free  $\tau$-action on  $\hat \Pi_\circ$}
In this subsection we consider general linear Satake diagrams of even white rank without $\tau$-fixed white nodes.
There are  two  $\tau$-invariant black blocks, which are allowed to be empty.
\subsubsection{White rank 2}
Consider first the case when there is only one pair of white nodes.
Without loss of generality, we an assume  that $\hat \Pi_\circ=\{\al_0, \al_m\}$ and $\tau(\al_0)=\al_m$.
This corresponds to  the diagram
$$
\begin{picture}(40,30)
\put(0.5,10.5){$\blacklozenge$}\put(17,-1){$\scriptscriptstyle\lozenge$}\put(17,26){$\scriptscriptstyle\lozenge$}\put(30.5,10.5){$\blacklozenge$}
\put(6,16){\line(1,1){12}}\put(6,12){\line(1,-1){12}}
\put(21,28){\line(1,-1){12}}\put(20.5,0){\line(1,1){12}}
\put(19.0,4){\vector(0,1){20}}\put(19.0,24){\vector(0,-1){20}}
\put(16,32){$\scriptstyle\al_0$} \put(16,-7){$\scriptstyle\al_m$}
\end{picture}
$$
The black blocks have the same parity of polarizations. Depending on wether they are even or odd, white nodes have either the same or different parities.
Choose $f_{\tilde \al_0}$ such that $\rho_z(f_{\tilde \al_0})=e_{n,1}$ and $e_{\tilde \al_m}=e_{1,n}$.
Set
$$
x_{\al_0}=e_{\al_0}-c_0 f_{\tilde \al_m}, \quad x_{\al_m}=e_{\al_m}-c_m f_{\tilde \al_0},
$$
$$
\rho_z(x_{\al_0})=ze_{n,1}-c_0e_{n,1}, \quad \rho_z(x_{\al_m})=\frac{1}{z}e_{1,n}-c_m e_{1,n}.
$$
With $z=c_0$ and $c_m=\frac{1}{z}$, the affine mixed generator is vanishing on $V_z$: $\rho_z(x_{\al_0})=\rho_z(x_{\al_m})=0$.
So do the Cartan generators $\rho_z(y_{\al_0})=0=\rho_z(y_{\al_m})$. A diagonal  matrix $\Ic=\la\sum_{i=1}^{m}e_{ii}+\mu \sum_{i=1}^{m}e_{jj}$ with different $\la,\mu\in \C$ is a non-trivial $\k$-invariant.

\subsubsection{White rank $\geqslant 4$}
Now suppose that the number of white pairs is two or greater:
$$
\begin{picture}(40,30)
\put(0.5,10.5){$\blacklozenge$}
\put(17,-1){$\scriptscriptstyle\lozenge$}\put(17,26){$\scriptscriptstyle\lozenge$}
\put(6,16){\line(1,1){12}}\put(6,12){\line(1,-1){12}}
\put(60.7,0.5){\line(1,0){17}}\put(60.7,28){\line(1,0){17}}

\put(19.0,4){\vector(0,1){20}}\put(19.0,24){\vector(0,-1){20}}

\put(20.7,0.5){\line(1,0){10}}\put(20.7,28){\line(1,0){10}}

\put(57,-1){$\scriptscriptstyle\lozenge$}\put(57,26){$\scriptscriptstyle\lozenge$}
\put(59.0,4){\vector(0,1){20}}\put(59.0,24){\vector(0,-1){20}}

\put(76,32){$\scriptstyle\al_0$} \put(76,-7){$\scriptstyle\al_m$}

\put(57.5,0.5){\line(-1,0){10}}\put(32.5,0){$\ldots$}\put(32.5,27.5){$\ldots$}\put(57.5,28){\line(-1,0){10}}

\put(77,-1){$\scriptscriptstyle\lozenge$}\put(77,26){$\scriptscriptstyle\lozenge$}\put(90.5,10.5){$\blacklozenge$}

\put(81,28){\line(1,-1){12}}\put(80.5,0){\line(1,1){12}}
\put(79.0,4){\vector(0,1){20}}\put(79.0,24){\vector(0,-1){20}}
\end{picture}
$$
The black blocks can have nodes of arbitrary parity and  be of arbitrary rank including zero. The $\tau$-linked white nodes have the same parity if they are adjacent to an even black block and different parities if polarization of  the even block is odd.

Assuming that $\al_0$ and $\al_m$ are white and $\al_i$, $i=m+1,\ldots, n$ are black, denote by  $S^\flat$ the a Satake sub-diagram $\Sg(\al_1,\ldots, \al_{m-1})$.
Let $\Ic$ be an invariant of the subalgebra $\k$ corresponding to $S$, and $\Xc_0=\rho_z(x_0)$, $\Xc_m=\rho_z(x_m)$ the matrices representing the mixed generators.
They have  the following block form
$$
\Ic:=
\left(
\begin{array}{cc}
  A & 0 \\
  0 & \eta \>1_k
\end{array}
\right),\quad
\mathcal{X}_m:=\left(
\begin{array}{cc}
  0 & X_m \\
  0 & 0
\end{array}
\right)
,
\quad
\mathcal{X}_0:=\left(
\begin{array}{cc}
  0 & 0 \\
  X_0 & 0
\end{array}
\right),
$$
where $A$ a sum of the standard is $\k^\flat$-invariant matrix of size $m$, $\eta \in \C$, and  $k=n+1-m$.
These matrices are presentable as
$$
A=
\left(
\begin{array}{cccccc}
 \nu+\mu+\la & 0& a \\
  0 & B&0 \\
  -\frac{\mu\la}{a}&  0&\nu
\end{array}
\right),
\quad
X_m =
\left(
\begin{array}{cccccc}
 \frac{c_m}{z} & 0& \ldots \\
  0 & 0&\ldots \\
  1&  0&\ldots
\end{array}
\right),
\quad
X_0 =
\left(
\begin{array}{cccccc}
 \vdots & \vdots& \vdots \\
  0 & 0&0 \\
  z&  0&c_0
\end{array}
\right).
$$
Here $\la+\nu$ and $\mu+\nu$ are eigenvalues and the complex number $a$ depends on the mixture parameters of $\k^\flat$.
The number of columns (rows) in the matrix $\Xc_m$ ($\Xc_0$) is $k$.
Evaluating commutators of $\Ic$ with $\Xc_m$ and $\Xc_0$
$$
\Ic \Xc_m-\Xc_m \Ic
=
\left(
\begin{array}{cc}
  0 & A X_m -\eta X_m \\
  0 & 0
\end{array}
\right),\quad
\Ic \Xc_0-\Xc_0 \Ic
=
\left(
\begin{array}{cc}
  0 & \\
   \eta X_0 - X_0 A & 0
\end{array}
\right),
$$
and equating them to zero we arrive at a system of equations
$$
(\nu+\mu+\la)\frac{c_m}{z}+a=\eta \frac{c_m}{z}, \quad -\frac{c_m}{z}\frac{\la\mu}{a}+\nu=\eta,
$$
$$
z(\nu+\mu+\la)-c_0\frac{\la\mu}{a}=\eta z, \quad  z a+ c_0\nu=c_0\eta.
$$
This system admits two solutions providing in particular  the value of the loop parameter:
$$
\eta=\la+\nu, \quad \mu\frac{c_m}{z}+a=0, \quad \Rightarrow \quad  z= -c_m\frac{\mu}{a}=1,
$$
$$
\eta=\mu+\nu, \quad \la\frac{c_m}{z}+a=0, \quad \Rightarrow \quad z=-c_m\frac{\la}{a}=1.
$$
Finally, the generators $y_0$ modulo $\l$ is proportional to $e_{11}+e_{mm}$, which commutes with $A$.

\subsection{Diagrams with $\tau$-fixed white roots}
In this section we study Satake diagrams of shape $\hat \Ag$ in the situation when $\tau(\al)=\al$ for some $\al \in  \hat \Pi_\circ$.
In a presentation of $S$ by a regular polygon  on a plane, $\tau$ is a reflection around the line passing through $\al$.
 Without loss of generality we can assume $\al=\al_0$.
\subsubsection{White rank 1}
\label{Sec_WR1}
Suppose that   $\hat \Pi_\circ=\{\al_0\}$ and $|\hat \Pi_\bullet|=n$.
Its parity can be arbitrary, equal to  the parity of polarization of $\l=\g$.
$$
\begin{picture}(40,30)
\put(3.5,12.5){$\scriptscriptstyle\lozenge$}

\qbezier(6,16.5)(19,30)(33,17)
\qbezier(6,12)(19,-1)(33,11)

 \put(30.5,10.5){$\blacklozenge$}
\put(-7,13){$\scriptstyle\al_0$}
\end{picture}
$$
The $\g$-module generated by $e_0$ is isomorphic to adjoint $\g$.
The only mixed generator of the spherical subalgebra $\hat \k$ is
$$
x_0=e_{\al_0}-c^2 f_{\tilde \al_0}, \quad c\in \C^\times.
$$
The evaluation representation on $V_z$ assigns
$$
e_0\mapsto z e_{N+1,1}, \quad f_{\tilde \al_0}\mapsto \frac{1}{z}e_{N+1,1}, \quad x_0\mapsto   (z- \frac{c}{z}) e_{N+1,1}.
$$
With $z=c$ the operator $x_0$ vanishes on $V_{\pm z}$ and hence on $W=V_z\tp  V_{-z}$.
The permutation $\pi\in \End(V_z\tp  V_{-z})$ is fixed by $\l$ but not by $\hat \g$:
$$
e_0\pi=z (e_0\tp 1 -  1\tp e_0)\pi=-z \pi (e_0\tp 1 -  1\tp e_0)=-\pi e_0.
$$
Therefore the subalgebra $\hat \k$ is proper in $\hat \g$.

\subsubsection{Odd white rank $\geqslant 3$}
\label{Sec_w_r=3}
Suppose that $\tau(\al_0)=\al_0$ is isolated from $\hat \Pi_\bullet$, which is allowed to be empty. Let us demonstrate that $\al_0$ must be even. Indeed, if $\al_0$ is odd, then $\zt_1$ and $\zt_{1'}=\theta(\zt_1)$ have different parities. But that is impossible because $\theta$ is an even involutive isometry. See also the left diagram in (\ref{GL-I})
and comments therein.

The Satake diagram reads
$$
\begin{picture}(40,30)
\put(6,14){\circle{4}}
\put(17,-1){$\scriptscriptstyle\lozenge$}\put(17,26){$\scriptscriptstyle\lozenge$}
\put(6,16){\line(1,1){12}}\put(6,12){\line(1,-1){12}}

\put(19.0,4){\vector(0,1){20}}\put(19.0,24){\vector(0,-1){20}}

\put(20.7,0.5){\line(1,0){10}}\put(20.7,28){\line(1,0){10}}

\put(57.5,0.5){\line(-1,0){10}}\put(32.5,0){$\ldots$}\put(32.5,27.5){$\ldots$}\put(57.5,28){\line(-1,0){10}}

\put(-7,13){$\scriptstyle\al_0$}
\put(16,32){$\scriptstyle\al_n$} \put(16,-7){$\scriptstyle\al_1$}

\put(57,-1){$\scriptscriptstyle\lozenge$}\put(57,26){$\scriptscriptstyle\lozenge$}\put(70.5,10.5){$\blacklozenge$}

\put(61,28){\line(1,-1){12}}\put(60.5,0){\line(1,1){12}}
\put(59.0,4){\vector(0,1){20}}\put(59.0,24){\vector(0,-1){20}}
\end{picture}
$$
where the black block can be empty. It  designates a general linear subalgebra $\l$ of arbitrary polarization. The white nodes that are $\tau$-linked have same parity
unless they are adaicent to the black block with odd polarization of $\l$.

Remove the affine root and consider the remaining finite dimensional Satake diagram. Let $\k^\flat\subset\g$ be its spherical subalgebra
and  $\Ic$ the   standard adjoint $\k^\flat$-invariant as in Section \ref{Sec_ad-gl-in}:
$$
\Ic=(\mu+\la)\sum_{i=1}^{m}e_{ii} + \la\sum_{i=m+1}^{m'-1}e_{ii}+ \sum_{i=1}^{m}(a_i e_{i,i'}+a_{i'} e_{i',i}),
$$
The integer $m$ is related to the white rank $|\hat \Pi_\circ|$ of the diagram equal to  $2m+1 $.
Relations between the mixture parameters of $\k^\flat$ and the parameters of $\Ic$ are described in Section \ref{Sec_ad-gl-in}.

Take
$
x_0=e_0-c_0 f_{ 0}+\grave{c}_0 h_0$, $c\in \C^\times
$
for the  mixed affine generator of $\k$ that is represented on  $V_z$ by the assignment
$$
x_0\mapsto   ze_{1',1}- \frac{c_0}{z}e_{1,1'}+\grave{c}_0 (e_{1,1}-e_{1',1'}).
$$
The matrix $\Ic$ commutes with $x_0$ if and only if
$$
\la\mu c_0+z^2 a_1^2=0, \quad 2\mu \la \grave{c}_0\bigl(1+(-1)^{\bar\al_0}\bigr)+(\la+\mu) z a_1=0.
$$
Since $\al_0$ is even, the loop parameter is determined uniquely provided $\grave{c_0}\not=0$. Otherwise
one should set $\mu=-\la$ with arbitrary $z$. Finally, since $\tau(\al_0)=\tau_0$, the generator $y_0$ is zero.
Thus $I$ is $\k$-invariant.

\subsection{Twisted invariants for identical $\tau$}
\label{Sec_twisted-shaft}
Satake diagrams of shape $\hat \Ag$ with $\tau=\id$ generate spherical subalgebras with twisted invariants with respect to the
action (\ref{t-twisted inv}).

Removing a white node we obtain a finite dimensional Satake diagram with $\tau=\id$.
Let us first consider the special case  of the right diagram in (\ref{3NODES}), which is not extendable to a bigger finite dimensional diagram.
Its  only affine extension  is
$$
\begin{picture}(40,30)
\put(6,13.5){\circle*{3}}\put(18,-3){\framebox(3,3)}\put(18,27){\framebox(3,3)}\put(32.5,13.5){\circle*{3}}
\put(6,15){\line(1,1){12}}
\put(6,12){\line(1,-1){12}}
\put(21,27){\line(1,-1){12}}\put(21,0){\line(1,1){12}}
\put(16,32){$\scriptstyle\al_0$} \put(16,-8){$\scriptstyle\al_2$}
\end{picture}
$$
  The diagram (\ref{3NODES}) generates a subalgebra  with the only mixed generator $x_2=e_{23}+c_2e_{41}$. As argued in \cite{AMS2} it has a twisted invariant
$\Ic=e_{12}-e_{21}-c_2(e_{34}-e_{43}$).
Removing $\al_2$ we obtain an isomorphic diagram. Pick root vectors $e_{\al_0}$ and $f_{\tilde \al_0}$    represented  on $V_z$ by
$x_0=ze_{41}+\frac{c_0}{z}e_{23}$. With  the loop parameter fixed to $z=\pm \sqrt{c_0 c_2}$, the operator $\rho_z(x_0)$ is proportional to $\rho_z(x_2)$.
Then the matrix $\Ic$ is $\k_z$- and hence $\k$-invariant.

Further we examine the case when removing a white node leaves a diagram of the shaft type.
Suppose that $n>2$ and there is at least one white root. We set it to be $\al_0$.
Up to an isomorphism, $\al_0$ has the following  three possible neighbourhoods:
\be
\label{3neighbourhoods}
\begin{picture}(120,20)
\put(0.5,3){\line(1,0){12}}\put(11,1){$\scriptscriptstyle\lozenge$}\put(14.5,3){\line(1,0){12}}\put(28,3){\circle{3}}
\put(29.5,3){\line(1,0){12}}\put(41,1){$\scriptscriptstyle\lozenge$}\put(44.5,3){\line(1,0){12}}
\put(25,10){$\scriptstyle\al_0$}
\end{picture}
\begin{picture}(120,20)
\put(25,10){$\scriptstyle\al_0$}
\put(0.5,3){\line(1,0){12}}\put(11,1){$\scriptscriptstyle\lozenge$}\put(14.5,3){\line(1,0){12}}\put(26.5,1.5){\framebox(3,3)}
\put(29.5,3){\line(1,0){12}}\put(43,3){\circle*{3}}\put(44.5,3){\line(1,0){12}}
\end{picture}
\begin{picture}(120,20)
\put(25,10){$\scriptstyle\al_0$}
\put(0.5,3){\line(1,0){12}}\put(13,3){\circle*{3}}\put(14.5,3){\line(1,0){12}}\put(28,3){\circle{3}}
\put(29.5,3){\line(1,0){12}}\put(43,3){\circle*{3}}\put(44.5,3){\line(1,0){12}}
\end{picture}
\ee
The corresponding mixed operators on $ V_z$ can be chosen as
$$
x_0=z e_{n,1}+\frac{c_0}{z} e_{1,n}
, \quad
x_0=z e_{n,1}+\frac{c_0}{z} e_{1,n-1}
, \quad
x_0=z e_{n,1}+\frac{c_0}{z} e_{n-1,n-1}
.
$$
Remove $\al_0$ and get a finite dimensional Satake sub-diagram $S^\flat$ of white rank $\ell$ with the subalgebra $\k^\flat$.
The standard twisted $\k^\flat$-invariant  matrix has a block-diagonal structure with the upper  block $a_1\varsigma_{m_1}$ and lower block $a_\ell\si_{m_\ell}$
where $\varsigma_m$ are as in (\ref{shaft-trns-inv}) and where $a_i\in \C^\times $ are scalar multipliers.
In particular, $m_1=1,1,2$ and $m_\ell=1,2,2$, respectively  for the three diagrams  above.

Let $\ve$ be the parity of weight $\zt_n$.
Then the constraints
$$
bc_0=-a z^2(-1)^\ve, \quad bc_0=-a z^2(-1)^\ve, \quad bc_0=a z^2(-1)^\ve,
$$
make $\Ic$ a twisted $\k$-invariant.

If $n=2$, there are two possibilities:
$
\begin{picture}(30,20)
\put(5,17){$\scriptstyle\al_0$}
\put(8,10){\circle{3}}
\put(9.5,10){\line(1,0){16}}\put(27,10){\circle{3}}
\put(17.5,-4.5){\circle{3}}
\put(9,9){\line(2,-3){8}}\put(26.5,9){\line(-2,-3){8}}
\end{picture}
$
$\begin{picture}(30,20)
\put(5,17){$\scriptstyle\al_0$}
\put(6,8.5){\framebox(3,3)}
\put(9.5,10){\line(1,0){16}}\put(27,10){\circle*{3}}
\put(16,-5){\framebox(3,3)}
\put(8,8.5){\line(2,-3){8}}\put(27.5,9){\line(-2,-3){8}}
\end{picture}
$.
They are processed similarly to the  left and middle diagrams in (\ref{3neighbourhoods}).

Finally, suppose that $n=1$. The case $|\hat \Pi_\bullet|=1$ was considered in Secion \ref{Sec_WR1}, so we assume that all roots are white.
We have  two diagrams consisting of  even and odd roots, respectively:
\be
\label{gl-rk-1}
\begin{picture}(36,10)
\put(5.5,2){\circle{3}}

\qbezier(6,4)(19,18)(33,4)
\qbezier(6,0)(19,-13)(33,0)

 \put(33,2){\circle{3}}
\put(-7,1){$\scriptstyle\al_0$}
\end{picture}
\quad\quad\quad
\begin{picture}(36,10)
\put(4,0.5){\framebox(3,3)}

\qbezier(6,4)(19,18)(33,4)
\qbezier(6,0)(19,-13)(33,0)

 \put(31.5,0.5){\framebox(3,3)}
\put(-7,1){$\scriptstyle\al_0$}
\end{picture}
\ee
The one on the right is affinization of an exotic diagram
$\begin{picture}(7,10)
\put(1.5,1.5){\framebox(3,3)}
\end{picture}
$
from (\ref{3NODES})
that stands away from the general shaft type.
With the choice of the loop parameter $z^2=\frac{c_0}{c_1}$ the operators
$$
x_1=e_{12}+c_1e_{21}, \quad x_0=ze_{21}+\frac{c_0}{z}e_{21},
$$
are proportional. Then the standard twisted $2\times 2$-matrix invariant (\ref{shaft-trns-inv}) of diagram
$
\begin{picture}(4,10)
\put(1,3){\circle{3}}
\end{picture}
$ is invariant with respect to the even $\k$.
 Both diagrams  admit adjoint invariant $e_{12}+(-1)^{\bar \al_1}c_1 e_{21}$. Note that this matrix is odd for $\g=\widehat{\g\l(1|1)}$ corresponding
 to the diagram on the right.
We conclude that the diagrams (\ref{gl-rk-1}) are both non-trivial.
\subsection{Central symmetry}
The affine Dynkin diagram of shape $\hat \Ag$ of even rank $2\ell$ admits an involutive automorphism
$\tau\colon \al_i\mapsto \al_{i+\ell\mod 2\ell}$. If one places the roots in the vertices of a regular plane $2\ell$-gon with the
center at the origin, this transformation becomes  the central symmetry.

A Satake diagram with such $\tau$ is necessarily white:  $\hat \Pi_\bullet=\varnothing$.
The basic module $\C^N$ of  dimension $N=2\ell$ is endowed with a "quasi-cyclic" grading $\deg(v_{i+\ell})=\deg(v_i)\mod 2$ for all  $i=1,\ldots,\ell$,
with $\bar \ell+\overline {\ell+1}$ being the parity of $\al_\ell$  (and therefore of  $\al_0$).

Before we approach the diagram $S$, consider first a direct sum $\d=\g\op \g$ of general linear Lie superalgebra of rank $2\ell-2$
with roots $\al_1,\ldots, \al_{\ell-1},\al_{\ell+1},\ldots \al_{2\ell-1}$ and the following disconnected Satake diagram
\be
\begin{picture}(120,25)
\put(0,1){$\scriptscriptstyle\lozenge$}
\put(3.5,3){\line(1,0){9}}\put(15,0){$\cdots$} \put(31.5,3){\line(1,0){9}}
\put(40,1){$\scriptscriptstyle\lozenge$}

\put(60,1){$\scriptscriptstyle\lozenge$}
\put(63.5,3){\line(1,0){9}}\put(75,0){$\cdots$} \put(91.5,3){\line(1,0){9}}
\put(100,1){$\scriptscriptstyle\lozenge$}

\put(4.8,6.5){\vector(-3,-2){2}}\put(59.2,6.5){\vector(3,-2){2}}
\put(44.8,6.5){\vector(-3,-2){2}}\put(99.2,6.5){\vector(3,-2){2}}
\qbezier(4,6)(30,22)(60,6)\qbezier(44,6)(70,22)(100,6)
\put(-3,-7){$\scriptstyle \al_1$}\put(34,-7){$\scriptstyle \al_{\ell-1}$}\put(56,-7){$\scriptstyle \al_{\ell+1}$}
\put(95,-7){$\scriptstyle \al_{2\ell-1}$}
\end{picture}
\ee
where the $\tau$-linked nodes have the same parity.
Denote by $S_{2\ell}$ such a Satake diagram of rank $2\ell$.
It is clear that $S_{2\ell}=S_{2\ell-2}\boxplus S_2$ with  $S_2=\{\al_\ell, \al_{2\ell}\}$.
The basic module for $S_{2\ell-2}$ is $(\C^{\ell-1}\oplus \C)\oplus (\C^{\ell-1}\oplus \C)$ while
 $\S_2$ naturally acts on $(\C\op\C)\op (\C\op \C)$.

The study of   $\d$ reduces to a diagram of rank $2$, which we start with.
Let $\{v_1,v_2\}\subset \C^2$ be the weight basis of homogeneous elements and $\{v_{1''},v_{2''}\}\subset \C^2$ be basis in the second copy of $\C^2$.
The grading on $\C^2\op \C^2$ is subject to the conditions $\bar i= \overline {i''}$.
Regardless of the grading, we can take
$$
x_1=e_{12}+c_1 e_{2'',1''}, \quad x_{1''}=e_{1'',2''}+c_1 e_{2,1}
$$
for the mixed generators of roots $\al=\al_1$ and $\tau(\al_1)$.

We are searching for an even matrix
$$
\Ic_2=a_1e_{1,1''}+a_2 e_{2,2''}+b_1e_{1'',1}+b_2 e_{2'',2} \in \End(\C^2\oplus \C^2)
$$
that  satisfies (\ref{t-twisted inv})
where the super-transposition is pulled back via the embedding $\C^2\op \C^2\hookrightarrow \C^{4}$ of the graded vector spaces.
This embedding is identical on the first copy of $\C^2$ and assigns  $v_{i''}\mapsto v_{i+\ell}$ for $i=1,2$.
The matrix $\Ic_2$  is $y_1$-invariant, where
$$
y_1=(-1)^{\bar 1}(e_{1,1}-e_{1'',1''})-(-1)^{\bar 2}(e_{2,2}-e_{2'',2''}).
$$
It is preserved by $x_1$ and $x_{1''}$  provided
$$
a_{2}=-(-1)^{\bar 1 \bar \al}a_{1}c_1, \quad b_{2}=-(-1)^{\bar 2\bar \al } b_{1} c_1, \quad c_{1''}=(-1)^{\bar \al }c_1.
$$
Induction on $\ell$ gives a twisted  invariant  $\Ic_{2\ell-2}=\sum_{i=1}^\ell a_i e_{i,i''}+ \sum_{i=1}^\ell b_ie_{i'',i}$
with respect to the mixed generators
$$
x_i=e_{i,i+1}+c_i e_{(i+1)'',i''}, \quad x_{i''}=e_{i'',(i+1)''}+c_i e_{i+1,i},
$$
where the parameters $a_i$, $b_i$ are related with the mixture parameters by
$$
 a_{i+1}=-(-1)^{\bar i \bar\al_i}a_{i}c_i, \quad b_{i+1}=-(-1)^{ \overline {i+1} \bar \al_i} b_{i} c_i, \quad c_{i''}=(-1)^{\bar \al_i}c_i.
$$
It follows that
\be
\frac{a_{i+1}}{a_i}=\frac{b_{i+1}}{b_i}(-1)^{\bar \al_i} \Rightarrow \frac{a_{\ell}}{a_1}=\frac{b_{\ell}}{b_1}(-1)^{\bar 1+\bar \ell}.
\label{buit-up}
\ee

Now we return to the affine diagram $S$ of rank $2\ell$.
Removing  the nodes $ \al_0,\al_\ell$ from $S$ we arrive at the diagram $S_\d$.
We transfer  its  invariant $I_{2\ell-2}$ to $\End(\C^{2\ell})$ and verify  against the twisted action of the elements
$$
x_{0} =e_{2\ell,1}+c_0 z e_{\ell+1,\ell}, \quad x_{\ell}=e_{\ell,\ell+1}+\frac{c_\ell}{z} e_{1,2\ell}.
$$
The calculation reduces to a 4-dimensional vector space $\Span(v_1,v_\ell,v_{\ell+1},v_{2\ell})$ and delivers  constraints on the loop and the mixture parameters:
$$
z=c_\ell(-1)^{\bar 1(\bar 1+\bar \ell)}\frac{a_\ell}{b_1},\quad c_0=c_\ell(-1)^{\bar 1+\bar \ell},
$$
with an additional condition $b_\ell b_1=a_\ell a_1(-1)^{\bar 1+\bar \ell}$. Along with  (\ref{buit-up}), it
implies $a_1=\pm b_1$.
Its invariance with respect to the generator
$$
y_0=(-1)^{\bar 1}(e_{1,1}-e_{\ell+1,\ell+1})-(-1)^{\bar \ell}(e_{\ell,\ell}-e_{2\ell,2\ell})
$$
is verified similarly to $y_1$.
\begin{propn}
\begin{itemize}
  \item
  The affine decorated Dynkin diagram of type $\hat \Ag$  of rank $2\ell$ with  polarization $\bar \al_i=\bar \al_{i+\ell\mod 2\ell}$, empty $\hat \Pi_\bullet$, and
  $\tau(\al_i)=\al_{i+\ell\mod 2\ell}$ is non-trivial.
  \item
  It produces a proper subalgebra $\k(\vec c)$, where the mixture parameters are related by
$$
c_{i+\ell}=(-1)^{ \bar \al_i}c_i, \quad 0<i<\ell, \quad c_\ell=(-1)^{\bar 1+\bar \ell}c_0.
$$
\item
The algebra $\k(\vec c)$ has a twisted matrix invariant
$\Ic_{2\ell-2}=\sum_{i=1}^{\ell}a_i e_{i,i+\ell}+b_i e_{\ell+i,i}\in\End(V_z)$ for
$z=c_\ell(-1)^{\bar 1(\bar 1+\bar \ell)}\frac{a_\ell}{b_1}$, where $a_1=\pm b_1$ and
$
\frac{a_{i+1}}{a_i}=\frac{b_{i+1}}{b_i}(-1)^{\bar \al_i}
$
for all $i=1,\ldots,\ell-1$.
\end{itemize}

\end{propn}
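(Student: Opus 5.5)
The plan is to show that for suitable mixture parameters and a suitable loop parameter $z$, the image $\k_z\subset\End(V_z)$ has a twisted invariant $\Ic$ satisfying (\ref{t-twisted inv}), and that this $\Ic$ is not invariant under the whole of $\g_z$. Since $\rho_z(\hat\g)=\g$ and $\g$ fixes no nonzero matrix under the twisted action (\ref{t-twisted inv}), this will give $\k\ne\hat\g$, which is both the first and the second claim. The third claim records the explicit form of $\Ic$ and the constraints that come out of the construction.

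The computation goes in three steps, following the decomposition $S=S_\d\boxplus\{\al_0,\al_\ell\}$.

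First, I handle the disconnected diagram $S_\d$ obtained by deleting $\al_0,\al_\ell$. Its subalgebra lies in $\g\l$ of $\Span(v_1,\ldots,v_\ell)\oplus\Span(v_{\ell+1},\ldots,v_{2\ell})$. I build the invariant by induction on $\ell$ with the splitting $S_{2\ell}=S_{2\ell-2}\boxplus S_2$. The rank-two block computation gives $\Ic_2$ together with the relations between $a_2,b_2$ and $a_1,b_1,c_1$. It also forces $c_{1''}=(-1)^{\bar\al}c_1$, which is where the evenness of the quasi-cyclic grading is used. Because each mixed generator $x_i,x_{i''}$ and each $y_i$ only touches the coordinates $i,i+1,i'',(i+1)''$, the step $\ell-1\to\ell$ is the same $4\times4$ check. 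The recursion telescopes to (\ref{buit-up}).

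Second, I pass to the evaluation module and check the two affine mixed generators $x_0=e_{2\ell,1}+c_0ze_{\ell+1,\ell}$ and $x_\ell=e_{\ell,\ell+1}+\frac{c_\ell}{z}e_{1,2\ell}$. Both live on $\Span(v_1,v_\ell,v_{\ell+1},v_{2\ell})$, and only the entries $a_1,a_\ell,b_1,b_\ell$ of $\Ic$ meet them there. So the conditions $x\Ic+\Ic x^t=0$ reduce to four scalar equations, with the super-transposition signs $(-1)^{\bar i(\bar i+\bar j)}$ tracked carefully. I then solve them:
\begin{itemize}
\item one equation fixes $z=c_\ell(-1)^{\bar1(\bar1+\bar\ell)}a_\ell/b_1$;
\item another ties $c_0$ to $c_\ell$, giving $c_\ell=(-1)^{\bar1+\bar\ell}c_0$;
\item the remaining one is $b_\ell b_1=(-1)^{\bar1+\bar\ell}a_\ell a_1$.
\end{itemize}
Combining the last condition with (\ref{buit-up}) yields $a_1^2=b_1^2$, so $a_1=\pm b_1$. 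Both $x_0$ and $x_\ell$ are built from $f_{\tilde\al_0}$ and $e_{\al_0}$, so they do not interact with the inductive part. The Cartan element $y_0$ is diagonal and $\theta$-antisymmetric, so it is handled exactly like $y_1$.

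Third, I need $\Ic\ne0$ and not $\g$-invariant. A twisted $\g$-invariant would have to be invariant under the diagonal Cartan elements in all weights, which forces $\Ic=0$, whereas our $\Ic$ has $a_i,b_i\ne0$. The remaining mixture parameters $c_i$ with $0<i<\ell$ stay free, which confirms that a whole subfamily of $\k(\vec c)$ is proper.

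I expect the main obstacle to be the sign bookkeeping in the affine step. The choice of the root vectors $e_{\al_0},f_{\tilde\al_0}$ and the super-transposition signs must make the four equations consistent, rather than overdetermined, for every quasi-cyclic grading. If they turn out inconsistent, I would first try rescaling $x_0$ by a parity sign, which amounts to renormalizing $f_{\tilde\al_0}$, before concluding anything else.
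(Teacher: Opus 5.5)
Your proposal follows the paper's argument: the rank-two computation and the induction $S_{2\ell}=S_{2\ell-2}\boxplus S_2$ giving (\ref{buit-up}), then the reduction of twisted invariance under $x_0,x_\ell,y_0$ to $\Span(v_1,v_\ell,v_{\ell+1},v_{2\ell})$, with the same three constraints and hence $a_1=\pm b_1$; your added remark that $\rho_z(\hat\g)=\g\l(N)$ contains the identity, and so has no nonzero twisted invariant, correctly makes the properness conclusion explicit. One normalization caveat: since $e_0\mapsto zf_\xi$ and $f_0\mapsto z^{-1}e_\xi$, you should take $\rho_z(x_0)\propto e_{2\ell,1}+\frac{c_0}{z}e_{\ell+1,\ell}$, with the same power of $z$ as in $x_\ell$, and only then do the four scalar equations give $c_\ell=(-1)^{\bar 1+\bar\ell}c_0$ --- with the factor $c_0z$ you wrote (as printed in the text) they give $c_0z^2=(-1)^{\bar 1+\bar\ell}c_\ell$ instead.
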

\section{Orthosymplectic  Satake diagrams with $\tau_\s=\id$}
\label{Sec_OSP-id}
In this section we classify spherical subalgebras in orthosymplectic  affine Lie superalgebras in the situation
when the involution $\tau_\s$ on the  shaft part the Dynkin diagram is identical.
This includes diagrams with  black  tails.

\subsection{Matching  ends with degenerate bridge}
\label{Sec_no shaft}
Suppose that  $S$ is a Satake diagram with $\tau_\s=\id$. We described
all possible right end sub-diagrams $S^r\subset S$ in Section \ref{algorithm}. When applying those results
to affine $S$, we have to
give a special consideration to the situation
when the tails are close to each other.
With that in view, we have to modify  our break down scheme $S=S^l\cup S^m\cup S^r$.

Consider
for instance the right end of the diagram.
Suppose that its tail $D_\t$ is black and let $\Theta\subset \hat \Pi$ be the subset of white nodes that are linked to the connected component of
$\hat \Pi_\bullet$
containing  $D_\t$.
Then we set $S^r=\Sg(\Theta)$. The left end sub-diagram $S^l$ is defined similarly.

If $S^l\cap S^r=\varnothing$, we can process them separately
and solve a problem of connecting the ends together either directly or through a shaft sub-diagram.
Note that the definition of $S^l$ and $S^r$  in this case coincides with what it was for finite dimensional $S$.
A similar situation occurs when  $S^l\cap S^r\not =\varnothing$, but the intersection
$S^l\cap S^r$ contains no white node.
\begin{definition}
  We say that an orthosymplectic Satake diagram has inseparable ends
  if $S^l\cap S^r\cap \hat \Pi_\circ \not =\varnothing$.
\end{definition}
\noindent
Here we give the full list of such diagrams:
\be
\begin{picture}(45,10)
\put(0.5,-0.5){$\blacklozenge$}
\multiput(8,3)(4,0){3}{\line(1,0){2}}
\put(18.5,.5){$\scriptstyle\lozenge$}
\multiput(24,3)(4,0){3}{\line(1,0){2}}
\put(34.5,-0.5){$\blacklozenge$}
\end{picture}
\quad
\begin{picture}(25,10)
\put(0.5,-0.5){$\blacklozenge$}
\put(19.,.5){$\scriptstyle\lozenge$}
\put(7,2){\line(1,0){10}}
\put(7,4){\line(1,0){10}}
\put(13.5,1){$\scriptstyle >$}
\end{picture}
\quad
\begin{picture}(24,10)
\put(2,3){\circle{3}}
\put(3,2){\line(1,0){10}}
\put(3,4){\line(1,0){10}}
\put(9.5,1){$\scriptstyle >$}
\put(14.5,-0.5){$\blacklozenge$}
\end{picture}
\quad
\begin{picture}(22,10)
\put(12,3.){\line(-1,-1){10}}\put(12,3.){\line(-1,1){10}}
\put(1,14){\circle{3}}\put(1,-8){\circle*{3}}
\put(12.,-0.5){$\blacklozenge$}
\end{picture}
\quad
\begin{picture}(36,10)
\put(12,3.){\line(-1,-1){10}}\put(12,3.){\line(-1,1){10}}
\put(1,14){\circle{3}}\put(1,-8){\circle*{3}}
\put(13.5,3){\circle{3}}
\multiput(16,3)(4,0){3}{\line(1,0){2}}
\put(26.5,-0.5){$\blacklozenge$}
\end{picture}
\quad
\begin{picture}(30,10)
\put(12,3.){\line(-1,-1){10}}\put(12,3.){\line(-1,1){10}}
\put(1,14){\circle{3}}\put(1,-8){\circle*{3}}
\put(12.5,1.5){\framebox(3,3)}

\put(16,2){\line(1,0){10}}
\put(16,4){\line(1,0){10}}
\put(22.5,1){$\scriptstyle >$}
\put(27,1.5){\textcolor{black}{\rule{3pt}{3pt}}}
\end{picture}
\quad
\begin{picture}(37,10)
\put(12,3.){\line(-1,-1){10}}\put(12,3.){\line(-1,1){10}}
\put(1,14){\circle{3}}\put(1,-8){\circle*{3}}
\put(12.5,1.5){\framebox(3,3)}

\put(18,2){\line(1,0){10}}
\put(18,4){\line(1,0){10}}
\put(15,1){$\scriptstyle <$}
\put(28,3){\circle*{3}}
\end{picture}
\quad
\begin{picture}(22,10)
 \put(-2,13){$\scriptstyle\lozenge$}\put(-2,-10){$\scriptstyle\lozenge$}
\multiput(.5,-3.5)(0,4){4}{\line(0,1){3}}

\qbezier(-3,-7)(-10,4)(-3,14)

\put(-3.6,13){\vector(2,3){2}}\put(-3.6,-6){\vector(2,-3){2}}

\put(12,3.){\line(-1,-1){10}}\put(12,3.){\line(-1,1){10}}
\put(12.,-0.5){$\blacklozenge$}
\end{picture}
\quad
\begin{picture}(22,10)
 \put(-2,13){$\scriptstyle\lozenge$}\put(-2,-10){$\scriptstyle\lozenge$}
\multiput(.5,-3.5)(0,4){4}{\line(0,1){3}}
\put(12,3.){\line(-1,-1){10}}\put(12,3.){\line(-1,1){10}}
\put(12.,-0.5){$\blacklozenge$}
\end{picture}
\label{no-bridge}
\ee
Remark that an  even  diagram
$
 \begin{picture}(25,25)
    \put(12,3.){\line(-1,-1){10}}\put(12,3.){\line(-1,1){10}}
\put(1,14){\circle{3}}\put(1,-8){\circle*{3}}
\put(13.5,3){\circle{3}}
    \put(15,3.){\line(1,1){10}}\put(15,3.){\line(1,-1){10}}
\put(26,14){\circle{3}}\put(26,-8){\circle*{3}}

\end{picture}
$
 is isomorphic to
$
 \begin{picture}(35,10)
    \put(12,3.){\line(-1,-1){10}}\put(12,3.){\line(-1,1){10}}
\put(1,14){\circle{3}}\put(1,-8){\circle{3}}
\put(13.5,3){\circle{3}}
    \put(15,3.){\line(1,1){10}}\put(15,3.){\line(1,-1){10}}
\put(26,14){\circle*{3}}\put(26,-8){\circle*{3}}
\end{picture}
$
with  $S^l\cap S^r=\varnothing$ that falls into a class with separable ends, see Section \ref{Sec_separable}.
\begin{propn}
  The Satake diagrams (\ref{no-bridge}) are non-trivial.
\end{propn}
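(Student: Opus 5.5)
The plan is to show, for each diagram in (\ref{no-bridge}), that for suitable mixture parameters the image $\k_z\subset\End(V_z)$ of $\k$ under the evaluation representation is a proper subalgebra of $\g_z$. We do this by exhibiting a matrix $\Ic\in\End(V_z)$, or a twisted version of it, which is fixed by $\k_z$ but not by $\g$. Since $\rho_z$ is surjective onto $\g$ and $\g$ admits no such non-scalar invariant, this forces $\k\neq\hat\g$.

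The common strategy is to remove one white node. In every diagram of (\ref{no-bridge}), deleting the affine white node (or, where $\al_0$ is black, the white node nearest the left tail) leaves a finite dimensional Satake sub-diagram $S^\flat\in\Cc(S)$. Its right end is one of the base diagrams of Section \ref{algorithm}, so by Section \ref{Sec-Tail-base-Inv} it has a standard invariant $\Ic^\flat$ of the form (\ref{I-osp-block}) or (\ref{transition-bridge}). The eigenvalue constraint $\mu=\pm\la$ is recorded there.

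Next we pull back the remaining affine mixed generator $x_0=e_{\al_0}+c_0f_{\tilde\al_0}$ to $V_z$, using $e_0\mapsto zf_\xi$ and $f_0\mapsto z^{-1}e_\xi$. Its image is supported on the corner entries $e_{1',1},e_{1,1'}$, or on $e_{2',1},e_{1',2},\dots$ for a left $\Dg$-tail. Requiring $[\rho_z(x_0),\Ic^\flat]=0$ then reduces to a $2\times2$ or $4\times4$ computation on $\Span(v_1,v_2,v_{2'},v_{1'})$. This is the same kind of calculation as in Section \ref{Sec_w_r=3}, and it yields relations of the type $z^2a^2=\pm\mu\la c_0$.

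We expect these equations to be solvable by fixing $z$ in terms of $c_0$ and the parameter $a$ of $\Ic^\flat$, provided the sign relation $\mu=\pm\la$ forced by the left end agrees with the one forced by the right end. The $y$-generators are then handled as in Section \ref{Sec_GL}: they are $\tau$-odd diagonal matrices commuting with the block structure. For the left $\Dg$-tail diagrams, we first apply the flip automorphism from Section \ref{Sec_Affine_basic}. This turns $\al_0\leftrightarrow\al_1$ into a finite dimensional picture, so those cases reduce to finite dimensional mixed-tail invariants glued at the corners.

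Each diagram is then handled in turn, as follows.

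\textbf{Single white node between two black blocks (first three diagrams).} Here $S^\flat$ has zero white rank. We instead take $\Ic=\la 1_l\oplus\mu 1_{l'}$ adapted to the two black components, and check the single mixed generator directly.

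\textbf{Twisted $\Dg$-tail diagrams (last two).} We use the twisted invariant with central skew block from Section \ref{Sec-Tail-base-Inv}, combined with the flip conjugation.

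\textbf{Mixed-tail diagrams.} We glue the mixed colour invariant (\ref{transition-bridge}) on both ends.

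The main obstacle is the compatibility of the two ends: each tail independently forces $\mu=\pm\la$ with a sign depending on $\eps_1$, the parities and the tail type, and these may clash. When they clash, we would first try to exploit the free choice of $z$ and of the grading signs $(-1)^{\ve}$. Failing that, we would fall back, as in Section \ref{Sec_WR1}, on an invariant in $\End(V_z\tp V_{-z})$, namely the permutation $\pi$ with $z$ chosen so that $\rho_{\pm z}(x_0)$ vanishes.
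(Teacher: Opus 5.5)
Your overall strategy (evaluate at $V_z$, look for a non-scalar invariant of $\k_z$, tune $z$) is the paper's. The case analysis, however, has two genuine holes.

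\textbf{The white-rank-one diagrams with $\l=\g$, and the last two diagrams.} In the two diagrams of (\ref{no-bridge}) where $\al_0$ is the only white node (the white $\Cg$-tail, and the $\Dg$-tail with $\al_1$ black, everything else black), one has $\l=\g$. Then $\l_z=\g$ acts irreducibly on $V_z$, and $\End(V_z)$ contains no non-scalar $\k_z$-invariant at all. Your matrix $\la 1\oplus\mu 1$ does not exist there, since there is a single black component. Gluing gives nothing, because $S^\flat$ has white rank zero and its standard invariant is $0$. The left-tail flip is an automorphism of $\hat\g$, so it cannot change this either.

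The missing idea, which drives every white-rank-one case, is collinearity. For these two diagrams $\tilde\al_0=2\dt-\al_0$, and $\tilde\al=\dt-\al$ when the white node is not affine. Hence $\rho_z(e_\al)$ and $\rho_z(f_{\tilde\al})$ are proportional, and $x_\al$ vanishes on $V_z$ for a suitable $z$ (for $\al=\al_0$, simultaneously on $V_{-z}$).
\begin{itemize}
\item When $\l\not\simeq\g$, a diagonal $\l_z$-invariant with two eigenvalues then does the job.
\item When $\l=\g$, you must pass to $V_z\tp V_{-z}$. There the permutation commutes with $\g$ but anticommutes with the image of $e_0$.
\end{itemize}
You mention this permutation only as a last resort for sign clashes, where it is useless: in those diagrams $\rho_z(x_0)$ cannot vanish.

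Collinearity also settles the last two diagrams, which are not both twisted.
\begin{itemize}
\item For $\tau\neq\id$, one has $\tilde\al_0=\xi$ and $\tilde\al_1=\dt-\al_1$. So $x_0$ and $x_1$ both vanish on $V_z$ once $c_0=\pm c_1$, and any non-scalar diagonal $\l$-invariant works.
\item For $\tau=\id$, the same weights give $\rho_z(x_0)\propto\rho_z(x_1)$ when $z^2=\pm c_0c_1$. This reduces everything to the finite dimensional $\k^\flat$-invariant.
\end{itemize}
The skew-block invariant of a flipped tail is not the relevant tool for the $\tau=\id$ diagram.

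\textbf{The three white-rank-two diagrams with a mixed left $\Dg$-tail.} Here $\al_0$ is white and isolated, $\al_1$ black and $\al_2$ white, and the right end is black, so there is nothing mixed to glue there. Deleting $\al_0$ leaves a black-tail end whose leftmost node is black, with $\eps_1=1$ forced by the left $\Dg$-shape; hence $\mu=\la$.

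On $\Span(v_1,v_2,v_{2'},v_{1'})$ the standard invariant has diagonal blocks $(\mu+\la)1_2$ and $0$. Meanwhile $\rho_z(x_0)=zf_\xi+\frac{c_0}{z}e_\xi$ (up to normalisation) is off-diagonal. So $[\Ic,\rho_z(x_0)]$ contains the term $(\mu+\la)\frac{c_0}{z}e_\xi$ for every $z$, and invariance needs $\mu=-\la$.

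This is exactly the clash you anticipated, and it is real. Neither $z$ nor the grading signs affect it, and your permutation fallback fails because $\rho_z(x_0)\neq0$. These diagrams are in fact of waif type. The paper's choice $z=\pm c_0\sqrt{\mu\la}/a$ only balances the diagonal blocks and overlooks the same term.

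What does work is the mechanism of Section \ref{Sec_Waif}.
\begin{itemize}
\item Put $\Nc=\Ic-\la$, so that $\Nc^2=0$.
\item For a special $z$ one has $[\Nc,[\Nc,\rho_z(x_0)]]=0$, which here means that $\rho_z(x_0)$ maps $\mathrm{im}\,\Nc$ into itself.
\item All other generators commute with $\Nc$; note $y_{\al_0}=0$ because $\al_0$ is $\tau$-fixed.
\item Hence $\k_z$ stabilises the proper subspace $\mathrm{im}\,\Nc\subset V_z$, and $\k\neq\hat\g$.
\end{itemize}
Alternatively, use the non-toric generator $x_0=e_0+c_0f_0+\grave{c}_0h_0$ with $c_0=-\grave{c}_0^2$, as in Section \ref{Sec_waif_inv}.
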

\begin{proof}
  Consider first the diagrams of white rank 1 and let $\al$ be the only white node.
  In this case the root vector $\rho_z(f_{\tilde \al})$ and $\rho_z(e_\al)$ are collinear. Therefore for
  each value of the mixture parameter $c_\al$ one can choose the loop parameter  to make the mixed generator $x_\al$ vanish on  $V_{\pm z}$.
  If the only white node $\al$ is affine, $\l=\g$, and there is no invariant in $\End(V_{\pm z})$. However
  it is present in $V_{z}\tp V_{-z}$: it is the permutation operator.
  If $\al$ is not affine, the diagram $D_\l$ consists of  two connected components. There is a diagonal
  $\l$-invariant in $V_z$ with two eigenvalues. It is therefore $\k$-invariant as $\rho_z(x_\al)$=0.

  We are left to work out diagrams of white rank $2$.
Consider the four diagrams of the non-twisted left shape $\Dg$. Their affine root is isolated from the black sub-diagram: $\Sg(\al)=\al$.
Removing $\al$ we obtain a finite dimensional Satake diagram whose standard invariant $\Ic$
equals
$$
\Ic=(\la+\mu) \sum_{i=1}^{2}e_{i,i}+a\sum_{i=1}^{2}e_{i,i'}-\frac{\la\mu}{a}\sum_{i=1}^{2}e_{i',i}+\ldots,
$$
possibly with a relation $\mu=\pm \la$ depending on $S^r$.  The evaluation  module $V_z$ affords a representation of the mixed affine generator $\rho_z(x_\al)=\frac{c_0^2}{z}\sum_{i=1}^{2}e_{i,i'}-z\sum_{i=1}^{2}e_{i',i}$.
Setting $z=\pm \frac{c_0\sqrt{\mu \la}}{a}$ we make $\Ic$ invariant with respect to $x_\al$.

Consider the remaining two diagrams on the right. The one with $\tau \not =\id$ allows for $\rho_z(x_{\al_0})=0=\rho(x_{\al_1})$ with a choice of the loop parameter
and an appropriate relation $c_0=\pm c_1$ depending on a scaling of the root vectors and the grading. Therefore any $\l$-invariant matrix, say, diagonal,
is also $\k$-invariant.
We are left to consider the case of $\tau=\id$.

Let us choose the root vectors as prescribed in Section \ref{Sec_Basic Rep}. and set $x_\al=e_\al+c_\al f_{\tilde \al}$, $\al=\al_0,\al_1$.
Remark that $\tilde \al_0=\zt_1-\zt_2$ and $\tilde \al_1=-\zt_1-\zt_2$.
Remove the affine root $\al_0$ from the diagram and restrict to  a finite dimensional spherical pair $(\g, \k^\flat)$.
The matrix
$$\Ic=a e_{1,1'}+\frac{\la^2}{a} e_{1',1} +\la\sum_{i=2}^{2'} e_{ii}$$
is a standard $\k^\flat$-invariant  for an appropriate choice of $a$ depending on the mixture parameter $c_1$.
It will be also $\k$-invariant for an appropriate choice of the loop parameter $z$ depending on $c_0$ and $c_1$:
the requirement is that  $x_0$ and $x_1$ generate the same mixed $\l$-submodules in $\hat \g_z$, which is feasible.
\end{proof}
\subsection{Affine diagrams with separable ends}
\label{Sec_separable}
 In this section we study  affine Satake diagrams whose left and right ends are separable.
By this we mean the  white rank of the graph $S^l\cap S^r$ is zero.
Then the intersection $S^l\cap S^r$  consists of a black node if not-empty. That node will be treated as a degenerate shaft bridge
and viewed as a connection clamp.

In order to obtain possible left end sub-diagrams,
one should take $S^r$ from Section \ref{algorithm} other than the two $\Bg$-shape diagrams and reflect them with respect to a vertical axis.
Define $S^m\subset D_\s$ the minimal Satake sub-diagram in $S$ containing $S\backslash (S^l\cup S^r$).
We say that $S^m$ is trivial if its white rank is zero.

\begin{definition}
\begin{itemize}
  \item  We say that $S^r$ (respectively $S^l$) has black/white connection if the leftmost node in $S^r$ (respectively rightmost
  in $S^l$) are of the corresponding colour.
  \item A shaft diagram has left and right black/white connections if such is the colour of the leftmost and, respectively, rightmost node.
\end{itemize}
\end{definition}
As we already mentioned, the  left and right connections in a shaft Satake diagram  have the same colour if and only if it has an even number of odd nodes.
Now we can formulate the connection rule of recovering $S$ from $S^l,S^m, S^r$:
\begin{lemma}
Let $S$ be a non-trivial Satake diagram. Then its sub-diagrams
 $S^l,S^m, S^r$ have the same type of connections.
\end{lemma}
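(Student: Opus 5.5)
The plan is to reduce the statement to a local one at each junction and then apply the selection rules of Section \ref{SecDDD-SR} through Proposition \ref{violating SR implies triviality}. Any Satake sub-diagram of a Satake diagram is Satake. So if $S$ is non-trivial, every junction where two of $S^l,S^m,S^r$ meet must avoid the forbidden sub-graphs (\ref{RVSR}), (\ref{ISO-ODD}), (\ref{4NODES}). I will treat the junction $S^m$--$S^r$; the left junction is the mirror image, and the case of trivial $S^m$ is the junction $S^l$--$S^r$ directly, where the clamping black node (if any) plays the role of a degenerate bridge.

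\textbf{Step 1: all possible adjacencies.} I list how the rightmost node $\bt$ of $S^m$ (or of $S^l$) can sit next to the leftmost node $\gm$ of $S^r$. Consider first the case where $S^r$ has a black connection and the neighbour is white. Then $\gm$ lies in a black block $C(\cdot)$ of $S^r$. Since $S^m$ is the minimal sub-diagram containing its white nodes, a white $\bt$ adjacent to $C(\cdot)$ would force $C(\cdot)\subset C(\bt)\subset S^m$, contradicting $S^m\cap S^r$ having zero white rank. The only exception is the shaft transition diagrams of (\ref{shaft_transition}) whose black node is shared. Those are exactly the ones with a black connection on that side, so a white--black mismatch cannot occur with shared nodes.

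\textbf{Step 2: the residual mismatch.} The remaining mismatch has white $\bt$ and white $\gm$ linked, with $\bt$ the last node of an alternating block and $\gm$ the first node of $S^r$. Alternatively, both ends are black and meet through a single odd white node, which is the second/third diagrams of (\ref{shaft_transition}). Here I compare with the inductive algorithm of Section \ref{algorithm}. Appending a white-rank-one diagram to a white connection uses only the first two transitions, and appending to a black connection uses only the last two. The content to prove is that any other gluing creates a forbidden sub-graph. Two cases arise.

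\textbf{Case (a).} A white node $\al$ with $\Sg(\al)=\{\al\}$ is glued next to a black node. This gives the pattern (\ref{RVSR}) with $\al$ in the role of $\bt$. Lemma \ref{non-grad-sel-rule} kills it unless $\al$ is odd. If $\al$ is odd and isotropic, Lemma \ref{isolated odd} kills it through its link to the white neighbour on the other side. The only escape is that no such white neighbour exists, and then the configuration is in fact the transition diagram with a black connection.

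\textbf{Case (b).} An odd white node between two black nodes is glued to a white node. This produces exactly the sub-graph (\ref{4NODES}) with $\si$ $\tau$-fixed and $\al\notin\Sg(\si)$, because $\tau_\s=\id$. Lemma \ref{le-b-d} then gives triviality.

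\textbf{Step 3: conclusion.} For shaft diagrams we use the observation already recorded: connections of a shaft diagram on its two sides have the same colour exactly when the number of odd nodes is even. Combining this with Steps 1 and 2 yields equal connection types at both junctions.

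I expect the main obstacle to be Step 1's bookkeeping near the tails. When $S^r$ is a $\Dg$-shaped or twisted tail base diagram, its leftmost node may carry a double link or be adjacent to the branching node. When $S^l\cap S^r$ is a single black clamp node, one must also check that the clamp cannot be shared with a white-connection end. For these cases I would check directly against the explicit list of $S^r$ in Section \ref{algorithm}, using Lemma \ref{sel-rul-d} for the $\Dg$-tails.
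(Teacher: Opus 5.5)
Your proposal has a genuine gap. One direction of the lemma is never proved, and Steps~2--3 invoke selection rules on configurations that are either not mismatches or already impossible by definition.

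Step~1 contains the right ingredient but draws the wrong conclusion from it. Suppose the leftmost node $\gm$ of $S^r$ is black and its left neighbour $\bt$ is white. Closure of $S^m$ under $C(\cdot)$ does put the black block of $\gm$ into $S^m$. But this contradicts nothing: $S^m\cap S^r$ then consists of black nodes only, so its white rank is still zero. That shared black node is exactly the black--black junction (the paper's ``connection clamp''), so it is the generic case rather than an exception.

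The opposite case is never addressed: $S^r$ has a white connection while the neighbouring piece ends in a black node. Steps~2--3 do not supply it, for the following reasons.
\begin{itemize}
\item A white--white adjacency is a match, not a ``residual mismatch''.
\item In Case~(a), the hypothesis $\Sg(\al)=\{\al\}$ is incompatible with $\al$ having a black neighbour, since that neighbour lies in $C(\al)$.
\item Lemma~\ref{isolated odd} requires $\Sg(\bt)=\{\bt\}$, so it cannot be applied to an odd node adjacent to a black one.
\item Case~(b) is a true restriction on Satake diagrams but says nothing about colours at a junction.
\item The parity fact in Step~3 compares the two ends of $S^m$ with each other, not $S^m$ with $S^r$.
\end{itemize}
So, as written, the selection rules do no work toward the lemma.

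The missing observation is that the lemma follows directly from the definition of a decorated sub-diagram: $C(\al)\subset S'$ for every white $\al\in S'$. This is what the paper uses. The pieces $S^l,S^m,S^r$ are such sub-diagrams, they cover $S$, and black components enter them whole. Since $\tau_\s=\id$, closure only adds black nodes, so the pieces overlap only in black nodes.

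Now look at the node $\bt$ just left of $\gm$.
\begin{itemize}
\item If $\gm$ is white, $\bt$ cannot be black, since then $\bt\in C(\gm)\subset S^r$. Hence the neighbouring piece ends in the white node $\bt$, and its closure cannot reach past $\bt$ because $\gm$ is white.
\item If $\gm$ is black, $\bt$ cannot be black, because the whole black component of $\gm$ lies in $S^r$. So $\bt$ is white, and its closure forces $\gm$ into the neighbouring piece, which then ends in the same black node.
\end{itemize}
With trivial $S^m$, this is verbatim the paper's argument for $S^l$ versus $S^r$: a black node of $S^r$ adjacent to a white node of $S^l$ must lie in $S^l$. For non-trivial $S^m$, the paper appeals to the finite-dimensional diagrams $S^l\boxplus S^m$ and $S^m\boxplus S^r$, whose construction rule encodes the same fact. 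Neither the selection rules nor non-triviality of $S$ is needed, and the tail-by-tail bookkeeping you anticipate disappears.
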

\begin{proof}
If $S^l\cap S^r\not =\varnothing$ it is exactly the black connection node, so the statement holds true.
Suppose the opposite. First consider the case of trivial  $S^m\subset S$. If $S^l$ and $S^r$ have different
type of connections, say $S^l$ is white and $S^r$ is black, then the black node from $S^r$ is connected to white node
in $S^l$ and must be included in $S^l$, which contradicts to the assumption that $S^l$ and $S^r$ are separable.

If the shaft $S^m$ is non-trivial, then the finite dimensional  Satake sub-diagrams $S^l\boxplus S^m$ and $S^m\boxplus S^r$
satisfy the requirements.
\end{proof}

\subsection{Reconstruction of affine invariants from finite dimensional}
\label{Sec_reconstruction}

 The first  question about a subalgebra  $\k$ is if it is proper subalgebra in $\hat \g$.
The main method to answering it is to search
for a matrix invariant in some representation that is preserved by $\k$.
The simplest representation of $\hat \g$ is $V_z$ for some $z\in \C^\times$. It is therefore worthy to check if $\k_z$ is
already  proper in $\g$.
This leads us to the following categorification
of spherical pairs.
\begin{definition}
Suppose that $\k\subset \hat \g$ is spherical subalgebra corresponding to a Satake diagram $S$.
  We call $\k$ (respectively S) homey if there is $z\subset \C^\times$ and $\Ic\in \End(V_z)$ preserved by $\k$. Otherwise
  $\k$ and $S$ are called waif.
\end{definition}
In other words, $\k$ is homey if endomorphisms from  $\k_z$ commute with the  matrix $\Ic$.  As a consequence, for
every proper  diagram $S'\subset S$,  the corresponding finite dimensional subalgebra $\k'\subset \k_z$
has an invariant that extends to an invariant of $\k_z$. Since $S$ can be  in general reconstructed from its smaller parts,
the problem of finding $\k$-invariant $\Ic$ reduces to the problem of matching invariants of finite dimensional spherical
subalgebras in $\k$.

Suppose that $S^r\subset S'$ and let $\Ic^r$, $\Ic'$ be their  standard $\k^r$- and $\k'$-invariants. The diagram $S'$ is obtained from $S^r$ by
 a shaft extension, so the relation $\mu=\pm \la$ on the eigenvalues of the $\Ic^r$  is preserved for $\Ic'$. If it is compatible with
the relations between eigenvalues of $\Ic^l$ associated with $S^l$, for some choice of the mixture and loop parameters, then $\Ic'$ is extended to a $\hat \k$-invariant $\hat \Ic$.

\begin{propn}
The following Satake diagrams are homey:
\begin{itemize}
  \item[i)]
The left tail is black while the right tail is  black or of shape  $\Bg$.
  \item[ii)]
$S^l$ and $S^r$ have shape  $\Cg$ or $\Dg$ with $\tau=\id$, or $\Dg$ of  mixed colour, or white twisted even  $\Dg$ of rank $2$ or white twisted odd $\Dg$ or rank $3$.
\item[iii)]
Either $S^l$ or $S^r$ is white twisted $\Dg$-shape odd of  rank $2$ or even of  rank $3$.
\end{itemize}
\begin{proof}
For each diagram from this list, the sub-diagrams $S^l$ and $S^r$ produce subalgebras $\k^l$ and $\k^r$ respectively.
They have standard invariants $\Ic^l$ and $\Ic^r$  whose eigenvalues $\mu,\la$ are subject to relation $\mu=\pm \la$, where
the sign is indicated for a particular end in Section \ref{Sec_reconstruction}. If this sign  is the same for both ends, the matrices $\Ic^l$ and
 $\Ic^r$ can be connected via the bridge invariant $\Ic^m$.  So we need to prove the coincidence of the signs within the hypothesis.

i)    Suppose,  for instance, that both tails are black. We have four different combinations depending on the colour of the  connection nodes
of $S^l$ and $S^r$.
Let $S^l\ni \al^l\not =\al^r\in S^r$ be the white nodes.  They belong to the shaft basis of simple roots, so we can present them as
 $\al^l=\zt_i-\zt_{i+1}$ and $\al^l=\zt_j-\zt_{j+1}$ with $i+1\leqslant j$. The signatures $\eps^l=\eps_{i+1}$ and $\eps^r=\eps_{j}$ of the invariant form $C$ are $\eps^l=(-1)^{\overline{i+1}}$ and $\eps^r=(-1)^{\overline{j}}$.
Consider the following two examples
$$
\begin{picture}(80,10)
\put(0.5,-0.5){$\blacklozenge$}
\multiput(8,3)(4,0){3}{\line(1,0){2}}
\put(18.5,.5){$\scriptstyle\lozenge$}
\put(22,3){\line(1,0){5}}
\put(30,0){$\cdots$}
\put(46,3){\line(1,0){5}}
\put(50,.5){$\scriptstyle\lozenge$}
\multiput(55.5,3)(4,0){3}{\line(1,0){2}}
\put(66,-0.5){$\blacklozenge$}
\put(20,8){$\scriptstyle -\eps^l$}\put(42,8){$\scriptstyle -\eps^r$}
\end{picture}
\quad\quad
\begin{picture}(80,10)
\put(20,8){$\scriptstyle -\eps^l$}\put(42,8){$\scriptstyle \eps^r$}
\put(0.5,-0.5){$\blacklozenge$}
\multiput(8,3)(4,0){3}{\line(1,0){2}}
\put(18.5,.5){$\scriptstyle\lozenge$}
\put(22,3){\line(1,0){5}}
\put(30,0){$\cdots$}
\put(47,3){\line(1,0){5}}
\put(53,3){\circle*{3}}
\put(65,.5){$\scriptstyle\lozenge$}
\multiput(70.5,3)(4,0){3}{\line(1,0){2}}
\put(54,3){\line(1,0){12}}
\put(81,-0.5){$\blacklozenge$}

\end{picture}
$$
The ends of the left diagram have the same  connection colour, therefore the number of odd roots in the bridge between them
is even. As a consequence, $\eps^l=\eps^r$, and the invariant of the sub-diagram $S\backslash S^l$ can be extended to an invariant
of $\k_z$ and hence of $\hat\k$. A similar reasoning applies for all types of tails from i).

ii) These diagrams suggest $\mu=-\la$ for both ends.

iii) These types of tail allow for arbitrary eigenvalues of the standard invariant, which can be adjusted
to  the constraints for the other tail.
\end{proof}
\end{propn}
In the next section we study diagrams whose end invariants have conflicting eigenvalue constraints.

\subsection{Waif $\k$}
\label{Sec_Waif}
We classify waif Satake  diagrams, that allow for no invariants in $\End(V_z)$,  in the (minimalist) toric parametrization of $\k$.
Let us  list possible combinations of left and right ends of affine diagrams whose standard invariants  may potentially conflict each other:
$$
\begin{picture}(150,85)
\put(55,33){\oval(110,100)}
\put(10,70){$\Cg$-white}
\put(10,50){$\Dg$-white}
\put(10,30){$\Dg$-mixed}
\put(10,10){w-even-$\Dg$-twisted}
\put(10,-10){b-odd-$\Dg$-twisted}

\put(110,63){\line(1,0){10}}
\put(123,62){$\ldots$}
\put(153.5,61.5){\framebox(3,3)}
\put(139,62){\line(1,0){12}}
\put(139,64){\line(1,0){12}}
\put(147.5,61){$\scriptstyle >$}


\put(110,43){\line(1,0){10}}
\put(123,42){$\ldots$}
\put(139,43){\line(1,0){10}}

\put(150,43){\circle*{3}}
\put(151,42){\line(1,0){12}}
\put(151,44){\line(1,0){12}}
\put(159.5,41){$\scriptstyle >$}
\put(166,43){\circle{3}}

\put(110,23){\line(1,0){10}}
\put(123,22){$\ldots$}
\put(139,23){\line(1,0){10}}
\put(149,20.5){$\scriptstyle\lozenge$}
\put(153,23){\line(1,0){12}}
\put(165,19.5){$\blacklozenge$}
\put(144,28){$\scriptscriptstyle \eps^r=-1$}

\put(110,3){\line(1,0){10}}
\put(123,2){$\ldots$}
\put(139,3){\line(1,0){10}}
\put(150,3){\circle*{3}}
\put(151,3){\line(1,0){12}}
\put(162,0.5){$\scriptstyle\lozenge$}
\put(166,3){\line(1,0){12}}
\put(178,-.5){$\blacklozenge$}
\put(157,8){$\scriptscriptstyle \eps^r=1$}

\end{picture}
$$

\vspace{10pt}
\begin{propn}
  Only  $\Cg$-white and $\Dg$-mixed left ends may appear in waif Satake diagrams.
\end{propn}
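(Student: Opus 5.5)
The plan is to show that the other three potentially conflicting left ends in the list, namely $\Dg$-white, w-even-$\Dg$-twisted and b-odd-$\Dg$-twisted, always yield homey diagrams, whatever the right end $S^r$ is. The argument runs through the preceding proposition on homey diagrams and the matching principle of Section~\ref{Sec_reconstruction}. A diagram can be waif only if the constraints $\mu=\pm\la$ on the standard invariants of $S^l$ and $S^r$ disagree, so I will check, end by end, that these three left ends never create such a disagreement.

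\emph{$\Dg$-white left end ($\tau=\id$).} Its standard invariant forces $\mu=-\la$. By item ii) of the previous proposition it matches every right end of shapes $\Cg$, $\Dg$ or mixed $\Dg$. The remaining right ends are black tails, $\Bg$-tails and twisted $\Dg$-tails.
\begin{itemize}
\item For twisted $\Dg$ ends of the ``free'' type in item iii), the eigenvalues are arbitrary, so they can be adjusted to $\mu=-\la$.
\item For black and $\Bg$ tails, the sign is $\mu=\mp\eps^r\la$. Here I will exploit the fact that a $\Dg$-white left end is isomorphic to a diagram with a flipped tail. The proposal is to use the $\Z_2$ flip automorphism of the left $\Dg$-tail described at the end of Section~\ref{Sec_Affine_basic}, i.e.\ $v_1\leftrightarrow v_{1'}$ rescaled by $z^{\pm1}$. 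Conjugating the standard invariant by this flip replaces the block $\left(\begin{smallmatrix}\mu+\la & a\\ -\mu\la/a & 0\end{smallmatrix}\right)$ on $\Span(v_1,v_{1'})$ by a twisted version, in which the relative sign between $\la$ and $\mu$ is absorbed into $z$. This turns the condition into $\mu=+\la$ whenever $\eps^r$ requires it.
\item Alternatively, I would solve the commutation equations of $\rho_z(x_{\al_0})$ with the glued matrix directly, exactly as in the white rank $\geqslant 4$ general linear computation. There, two solutions $\eta=\la+\nu$ and $\eta=\mu+\nu$ appeared. I expect an analogous pair of solutions to exist here for $\tau=\id$ on the tail, with $z$ fixed by $z^2=\pm c_0^2\mu\la/a^2$.
\end{itemize}

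\emph{Twisted $\Dg$ left ends (w-even and b-odd).} These two are the ones in item iii) with unconstrained eigenvalues, once the ranks are read on the left side: the rank of the tail-containing block is shifted by the connection colour. I will verify this identification by comparing gradings. The twisted invariant has central block $\left(\begin{smallmatrix}0&\la\\ \mu&0\end{smallmatrix}\right)$, and the $n=2$ odd / $n=3$ even relations $c_{n-1}=(-1)^{\ve_n}\la/a$, $c_n=(-1)^{\ve_n}\mu/a$ impose no relation between $\la$ and $\mu$. These ends can therefore always be matched to any right end through the shaft bridge $\Ic^m$ of (\ref{transition-bridge}), so they are homey.

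The main obstacle is the $\Dg$-white versus black/$\Bg$ right-end case. Showing that the flip (or the extra solution branch in $z$) really absorbs the sign $\eps^r$ requires a careful account of the parity factors $(-1)^{\bar 2(\bar 1+\bar 2)}$ in the flip and of the relation $\eps^l=\eps^r$ imposed by the bridge polarization. I would try the explicit $4\times4$ reduction to $\Span(v_1,v_2,v_{2'},v_{1'})$ first.
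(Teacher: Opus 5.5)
\paragraph{Review.} Your framing is right. A left end can occur in a waif diagram only if its eigenvalue constraint clashes with that of the right end. The homey proposition already handles every right end except the four with $\mu=\la$:
\begin{itemize}
\item odd short $\Bg$-root with white connection;
\item even short $\Bg$-root with black connection;
\item black tail with white connection and $\eps^r=-1$;
\item black tail with black connection and $\eps^r=1$.
\end{itemize}
However, neither of your case analyses survives.

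\paragraph{Twisted ends.} You have swapped the two classes. The w-even and b-odd ends are the ``$n=2$ even / $n=3$ odd'' tails, for which $\mu=-\la$ is forced (item ii) of the homey proposition). That is exactly why they appear on the list of potentially conflicting ends. The relations $c_{n-1}=(-1)^{\ve_n}\la/a$, $c_n=(-1)^{\ve_n}\mu/a$ with free eigenvalues that you quote belong to the w-odd and b-even tails of item iii). So these two ends face the same four dangerous right ends as the $\Dg$-white end, and your argument does not address that.

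\paragraph{The $\Dg$-white end against those right ends.} The flip cannot absorb the sign. Conjugation by an invertible operator preserves the spectrum and the Jordan type of an invariant. The blocks on $\Span(v_i,v_{i'})$ have eigenvalues exactly $\la,\mu$: they are semisimple when $\mu=-\la$ and form a Jordan block when $\mu=\la$. No conjugation can turn one constraint into the other. Moreover, for $\tau=\id$ the tail flip maps the decorated end to itself, so the end is not isomorphic to a flipped tail. The hoped-for second branch in $z$ is not supported by anything.

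\paragraph{Missing idea.} The missing idea is that these combinations never occur at all.
\begin{itemize}
\item A left tail of shape $\Dg$ forces $\eps_1=+1$. Indeed, for $\eps_1=-1$ one has $\xi=2\zt_1$ and the left tail is of shape $\Cg$.
\item Since $\eps_i\eps_j=(-1)^{\bar i+\bar j}$, the signature flips across every odd node.
\item A shaft bridge has an even number of odd nodes exactly when its two connection colours agree.
\end{itemize}
Propagating the sign from each of the four right ends through the bridge gives $\eps_1=-1$, a contradiction. The same count, taking the parity of the tail nodes into account, disposes of the two twisted ends. This is the paper's proof. The sign bookkeeping you flagged as the ``main obstacle'' is in fact the whole argument, and it excludes the dangerous pairings rather than resolving them.
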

\begin{proof}
  Let us show that the other three types of left ends cannot appear.  We conduct a detailed proof for the $\Dg$-white end only. The reasoning for the remaining
  $\Dg$-twisted ends is literally the same.  The relevant diagrams are
$$
\begin{picture}(60,25)
\put(9,25){\circle{3}}\put(9,0){\circle{3}}
\put(10,24){\line(1,-1){10}}
\put(10,1){\line(1,1){10}}
\put(23,12){$\ldots$}
\put(53.5,11.5){\framebox(3,3)}
\put(39,12){\line(1,0){12}}
\put(39,14){\line(1,0){12}}
\put(47.5,11){$\scriptstyle >$}
\end{picture}
\quad
\begin{picture}(80,25)
\put(-1,25){\circle{3}}\put(-1,0){\circle{3}}
\put(0,24){\line(1,-1){10}}
\put(0,1){\line(1,1){10}}

\put(12,13){\line(1,0){10}}
\put(25,12){$\ldots$}
\put(41,13){\line(1,0){10}}
\put(51,10.5){$\scriptstyle\lozenge$}
\put(55,13){\line(1,0){12}}
\put(67,9.5){$\blacklozenge$}
\put(46,18){$\scriptscriptstyle \eps^r=-1$}

\end{picture}
\quad
\begin{picture}(100,25)
\put(-1,25){\circle{3}}\put(-1,0){\circle{3}}
\put(0,24){\line(1,-1){10}}
\put(0,1){\line(1,1){10}}

\put(12,13){\line(1,0){10}}
\put(25,12){$\ldots$}
\put(41,13){\line(1,0){10}}
\put(52,13){\circle*{3}}
\put(53,13){\line(1,0){12}}
\put(65,10.5){$\scriptstyle\lozenge$}
\put(68,13){\line(1,0){12}}
\put(80,9.5){$\blacklozenge$}
\put(59,18){$\scriptscriptstyle \eps^r=1$}
\end{picture}
\begin{picture}(80,25)
\put(-1,25){\circle{3}}\put(-1,0){\circle{3}}
\put(0,24){\line(1,-1){10}}
\put(0,1){\line(1,1){10}}

\put(12,13){\line(1,0){10}}
\put(25,12){$\ldots$}
\put(41,13){\line(1,0){10}}

\put(52,13){\circle*{3}}
\put(53,12){\line(1,0){12}}
\put(53,14){\line(1,0){12}}
\put(61.5,11){$\scriptstyle >$}
\put(68,13){\circle{3}}
 \end{picture}
$$
The suppressed bridge in the two diagrams on the left is a shaft Satake sub-diagram with an even number of odd nodes (because it has white connections on the left and on the right).
But this implies $\eps_1=-1$, which contradicts the $\Dg$-shape of the left tail. Analogously, the two diagrams on the right has a bridge with white and black connections
on the left and on the right, respectively. It is a shaft Satake sub-diagram with an odd number of odd nodes. This again implies $\eps_1=-1$, which is impossible.
\end{proof}
In particular,
the bridge can be empty or consist of one black node. Then the ends are connected directly.
Minimal end-to-end  diagrams with same connection colour:
$$
\begin{picture}(40,10)
\put(-10,1){$\scriptstyle -$}
\put(0,3){\circle{3}}
\put(1,1.5){\line(1,0){12}}
\put(1,4.4){\line(1,0){12}}
\put(10.5,1){$\scriptstyle >$}
 \put(16.5,1.5){\framebox(3,3)}
\put(22,1){$\scriptstyle +$}
\end{picture}
\quad
\begin{picture}(40,10)
\put(-10,1){$\scriptstyle -$}
\put(0,3){\circle{3}}
\put(1,2){\line(1,0){12}}
\put(1,4){\line(1,0){12}}
\put(10.5,1){$\scriptstyle >$}
\put(16,0.5){$\scriptstyle\lozenge$}
\put(20,3){\line(1,0){12}}
\put(32,-0.5){$\blacklozenge$}
\put(11,8){$\scriptscriptstyle \eps^r=-1$}
\put(41,1){$\scriptstyle +$}
\end{picture}
\quad
$$
$$
\begin{picture}(55,25)
\put(0,1){$\scriptstyle -$}
    \put(13.5,3.2){\line(-1,1){10}}\put(13.5,3.2){\line(-1,-1){10}}
\put(2.,14){\circle{3}}\put(2.,-8){\circle*{3}}

\put(14,1.5){\framebox(3,3)}

\put(17,2){\line(1,0){12}}
\put(17,4){\line(1,0){12}}
\put(26,1){$\scriptstyle >$}
\put(32,1.5){\framebox(3,3)}
\put(38,1){$\scriptstyle +$}
\end{picture}
\quad
\begin{picture}(60,25)
\put(0,1){$\scriptstyle -$}
    \put(13.5,3.2){\line(-1,1){10}}\put(13.5,3.2){\line(-1,-1){10}}
\put(2.,14){\circle{3}}\put(2.,-8){\circle*{3}}

\put(14,1.5){\framebox(3,3)}

\put(17,3){\line(1,0){12}}

\put(28,0.5){$\scriptstyle\lozenge$}
\put(32,3){\line(1,0){12}}
\put(44,-0.5){$\blacklozenge$}
\put(23,8){$\scriptscriptstyle \eps^r=-1$}
\put(54,1){$\scriptstyle +$}
\end{picture}
\quad
\begin{picture}(55,25)
\put(0,1){$\scriptstyle -$}
\put(29,3){\circle*{3}}
 \put(16,3){\line(1,0){12}}
\put(15,3){\circle{3}}
    \put(13.5,3.2){\line(-1,1){10}}\put(13.5,3.2){\line(-1,-1){10}}
\put(2.,14){\circle{3}}\put(2.,-8){\circle*{3}}

\put(29,2){\line(1,0){12}}
\put(29,4){\line(1,0){12}}
\put(38,1){$\scriptstyle >$}
\put(45,3){\circle{3}}
\put(50,1){$\scriptstyle +$}
\end{picture}
\quad
\quad
\begin{picture}(55,25)
\put(0,1){$\scriptstyle -$}
    \put(13.5,3.2){\line(-1,1){10}}\put(13.5,3.2){\line(-1,-1){10}}
\put(2.,14){\circle{3}}\put(2.,-8){\circle*{3}}
\put(15,3){\circle{3}}
 \put(16,3){\line(1,0){12}}

\put(29,3){\circle*{3}}

\put(30,3){\line(1,0){12}}
\put(41,0.5){$\scriptstyle\lozenge$}
\put(45,3){\line(1,0){12}}
\put(56,-.5){$\blacklozenge$}
\put(36,8){$\scriptscriptstyle \eps^r=1$}
\put(65,1){$\scriptstyle +$}
\end{picture}
\quad
$$
Minimal end-to-end  diagrams with different connection colour:
$$
\begin{picture}(70,10)
\put(-10,1){$\scriptstyle -$}
\put(0,3){\circle{3}}
\put(1,2){\line(1,0){12}}
\put(1,4){\line(1,0){12}}
\put(10.5,1){$\scriptstyle >$}
\put(16.5,1.5){\framebox(3,3)}
\put(20,3){\line(1,0){12}}
\put(33,3){\circle*{3}}
\put(34,2){\line(1,0){12}}
\put(34,4){\line(1,0){12}}
\put(43,1){$\scriptstyle >$}
\put(50,3){\circle{3}}
\put(55,1){$\scriptstyle +$}
\end{picture}
\quad
\begin{picture}(70,10)
\put(-10,1){$\scriptstyle -$}
\put(0,3){\circle{3}}
\put(1,2){\line(1,0){12}}
\put(1,4){\line(1,0){12}}
\put(10.5,1){$\scriptstyle >$}
\put(16.5,1.5){\framebox(3,3)}
\put(20,3){\line(1,0){12}}
\put(33,3){\circle*{3}}
\put(34,3){\line(1,0){12}}
\put(45,0.5){$\scriptstyle\lozenge$}
\put(49,3){\line(1,0){12}}
\put(60,-.5){$\blacklozenge$}
\put(40,8){$\scriptscriptstyle \eps^r=1$}
\put(70,1){$\scriptstyle +$}
\end{picture}
 \quad
$$
$$
\begin{picture}(75,25)
\put(0,1){$\scriptstyle -$}
    \put(13.5,3.2){\line(-1,1){10}}\put(13.5,3.2){\line(-1,-1){10}}
\put(2.,14){\circle{3}}\put(2.,-8){\circle*{3}}

\put(14,1.5){\framebox(3,3)}
\put(18,3){\line(1,0){12}}
\put(30,1.5){\framebox(3,3)}
\put(34,3){\line(1,0){12}}

\put(47,3){\circle*{3}}

\put(47,2){\line(1,0){12}}
\put(47,4){\line(1,0){12}}
\put(56,1){$\scriptstyle >$}
\put(63,3){\circle{3}}
\put(68,1){$\scriptstyle +$}
\end{picture}
\quad
\begin{picture}(90,25)
\put(0,1){$\scriptstyle -$}
    \put(13.5,3.2){\line(-1,1){10}}\put(13.5,3.2){\line(-1,-1){10}}
\put(2.,14){\circle{3}}\put(2.,-8){\circle*{3}}

\put(14,1.5){\framebox(3,3)}
\put(18,3){\line(1,0){12}}
\put(30,1.5){\framebox(3,3)}
\put(34,3){\line(1,0){12}}

\put(47,3){\circle*{3}}

\put(48,3){\line(1,0){12}}
\put(59,0.5){$\scriptstyle\lozenge$}
\put(63,3){\line(1,0){12}}
\put(74,-.5){$\blacklozenge$}
\put(54,8){$\scriptscriptstyle \eps^r=1$}
\put(83,1){$\scriptstyle +$}
\end{picture}
 \quad
\begin{picture}(60,25)
\put(0,1){$\scriptstyle -$}
 \put(16,3){\line(1,0){12}}
\put(15,3){\circle{3}}
    \put(13.5,3.2){\line(-1,1){10}}\put(13.5,3.2){\line(-1,-1){10}}
\put(2.,14){\circle{3}}\put(2.,-8){\circle*{3}}
\put(29,3){\circle*{3}}
\put(30,3){\line(1,0){12}}
\put(42,1.5){\framebox(3,3)}

\put(45,2){\line(1,0){12}}
\put(45,4){\line(1,0){12}}
\put(54,1){$\scriptstyle >$}
\put(60,1.5){\framebox(3,3)}
\put(66,1){$\scriptstyle +$}
\end{picture}
\quad
\quad
\begin{picture}(50,25)
\put(0,1){$\scriptstyle -$}
 \put(16,3){\line(1,0){12}}
\put(15,3){\circle{3}}
    \put(13.5,3.2){\line(-1,1){10}}\put(13.5,3.2){\line(-1,-1){10}}
\put(2.,14){\circle{3}}\put(2.,-8){\circle*{3}}

\put(29,3){\circle*{3}}
\put(30,3){\line(1,0){12}}
\put(30,3){\line(1,0){12}}
\put(42,1.5){\framebox(3,3)}
\put(46,3){\line(1,0){12}}
\put(58,0.5){$\scriptstyle\lozenge$}

\put(62,3){\line(1,0){12}}
\put(72,-.5){$\blacklozenge$}
\put(52,8){$\scriptscriptstyle \eps^r=1$}
\put(81,1){$\scriptstyle +$}
\end{picture}
\quad
$$
The signs indicate the relation betweeen eigenvalues of the standard invariants of the end sub-diagrams
which obstruct their extention to a global invariant.

Recall that ends of different connection colour are bridged with a shaft diagram of odd white rank.
On the contrary, the white rank of the shaft sub-diagram linking ends with the same connection colour  is even.
Remove the white affine root $\al_0$ from $S$ and denote the resulting Satake sub-diagram by $S^\flat$. Let $\k^\flat\subset \k$
be the corresponding finite dimensional spherical subalgebra and $\Ic$ the standard $\k^\flat$-invariant.
In all cases, its  eigenvalues are related by $\mu=\la$. The matrix $\Nc=\Ic-\la$ is nilpotent, $\Nc^2=0$, and $\k^\flat$-invariant.
However  not with respect to $\k_z$  for any $z$, because $\k^l$-invariance requires $\mu=-\la$.

The operator $\ad \Nc\colon \End(V)\to \End(V)$ has degree of nilpotence $3$. A direct check shows that $\g\not \subset \ker (\ad \Nc)^2$:
for instance, $[\Nc,[\Nc,e_\xi]]\not =0$ for the maximal root vector $e_\xi$.
\begin{propn}
  For a special choice of the loop parameter, $(\ad \Nc)^2(\k_z)=0$.
  As a consequence, the subalgebra $\k$ is proper in $\hat \g$.
\end{propn}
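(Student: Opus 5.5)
The plan is to show that $(\ad \Nc)^2$ annihilates every generator of $\k_z$ and then pass to the whole algebra. First, $\ker(\ad\Nc)^2$ is not a Lie subalgebra in general, so closure under brackets must be checked. Since $\Nc^2=0$ and $\Nc$ is even, $(\ad \Nc)^2(X)=-2\Nc X\Nc$ for any $X$ (the terms $\Nc^2X$ and $X\Nc^2$ drop out). Hence the condition is equivalent to $\Nc X\Nc=0$, i.e. $X(\mathrm{im}\,\Nc)\subset\ker\Nc$. Put $U=\mathrm{im}\,\Nc\subset W=\ker\Nc$; here $U\subset W$ holds because $\Nc^2=0$. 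The set $\{X\colon XU\subset W\}$ is still not closed under commutators. The stronger sets $\{X\colon XU\subset U,\ XW\subset W\}$ and $\{X\colon [X,\Nc]=0\}$ are Lie subalgebras. I would therefore aim to show that each generator of $\k_z$ either commutes with $\Nc$, or preserves the flag $U\subset W$ and acts on it so that products stay in the class. Concretely, the candidate closed class is $\{X\colon XU\subset U,\ XW\subset W\}+\ker\ad\Nc$, intersected with the requirement $\Nc X\Nc=0$. If that turns out awkward, I would note that all generators lie in the parabolic stabilizer of the flag $U\subset W$. Then $\k_z$ lies there too, and every element of that stabilizer satisfies $XU\subset U\subset W$, hence $\Nc X\Nc=0$. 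This stabilizer route is the one I commit to.

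\textbf{Step 1 (generators of $\k^\flat$).} By construction, $\Ic$ is the standard $\k^\flat$-invariant with $\mu=\la$, so $[\Nc,X]=0$ for $X\in\k^\flat_z$. The evaluation map is the identity on $\g$, so $\k^\flat_z=\k^\flat$. Commuting with $\Nc$ implies preserving $\ker\Nc$ and $\mathrm{im}\,\Nc$, so these generators lie in the flag stabilizer.

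\textbf{Step 2 (affine generators).} The remaining generators are $x_{\al_0}$ and $y_{\al_0}$. The left end is $\Cg$-white or $\Dg$-mixed, by the preceding proposition. In the standard form (\ref{I-osp-block}) with $\mu=\la$, the outer block of $\Nc$ is $\left(\begin{smallmatrix}0& a\si_m\\ -\frac{\la^2}{a}\si_m&0\end{smallmatrix}\right)$ on $\C^m\oplus\C^m$ spanned by $v_1,v_{1'}$ (and $v_2,v_{2'}$). Its restriction is nilpotent only together with the inner part. I would compute $U$ and $W$ explicitly near the extreme basis vectors: they are spanned by combinations $v_i-\frac{\la}{a}\,(\pm) v_{i'}$, with $i$ running over the outer block. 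Then I would write $\rho_z(x_{\al_0})=\frac{c_0}{z}E+zF$, where $E$ and $F$ are the explicit root matrices from Section \ref{Sec_Basic Rep} for $e_{\tilde\al_0}$-type and $f_\xi$-type entries. These are, respectively, $\sum e_{i,i'}$-type and $\sum e_{i',i}$-type matrices, as in the proof for the diagrams (\ref{no-bridge}). Requiring $\rho_z(x_{\al_0})$ to map the line $U$ into itself gives one scalar equation, $z^2=\pm c_0\frac{a}{\la}$ up to the sign fixed by the grading. This is the special loop parameter. The element $y_{\al_0}$ is diagonal, and its eigenvalue on $v_i$ matches that on $v_{i'}$ (with sign opposite on the pair), so it preserves $U$ and $W$ because they are spanned by $\tau$-symmetric combinations. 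This should be checked explicitly in both end types.

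\textbf{Step 3 (conclusion).} All generators lie in the flag stabilizer, which is a subalgebra. Hence $\k_z$ lies in it, and so $(\ad\Nc)^2(\k_z)=-2\Nc\k_z\Nc=0$. On the other hand $(\ad\Nc)^2e_\xi\neq0$, as observed above, and $e_\xi\in\g=\rho_z(\hat\g)$. So $\k_z\neq\rho_z(\hat\g)$, and $\k\neq\hat\g$.

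The main obstacle is Step 2. I must verify that a single choice of $z$ simultaneously makes the $e$-part and the $f$-part of $x_{\al_0}$ compatible with the flag. For $\Dg$-mixed ends, the $m=2$ blocks carry $\si_2$ signs and mixed gradings. If the stabilizer condition fails there, I would fall back to checking $\Nc X\Nc=0$ directly on iterated brackets of $x_{\al_0}$ with $\k^\flat$ generators, using the weight grading to limit which brackets can reach the outer block.
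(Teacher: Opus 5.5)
Your proposal is correct. Its one substantive computation is the paper's: restrict to the outer block of $\Nc$ (spanned by $v_1,v_{1'}$, resp.\ $v_1,v_2,v_{2'},v_{1'}$), which is the only place $\rho_z(x_{\al_0})$ acts, and solve for the loop parameter, getting $z=\pm c_0\la/a_1$ in the paper's normalization.

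The difference is how you pass from generators to $\k_z$. The paper checks $[\Nc,[\Nc,x_0]]=0$ on the affine generator and then asserts $(\ad\Nc)^2(\k_z)=0$, leaving that passage implicit. As you note, $\ker(\ad\Nc)^2=\{X:\Nc X\Nc=0\}$ is not closed under brackets, so the passage does need an argument. Your stabilizer of $U=\mathrm{im}\,\Nc$ supplies it: it is an associative, hence Lie super, subalgebra contained in that kernel. You do not even need $W$, since $XU\subset U\subset\ker\Nc$ already gives $\Nc X\Nc=0$.

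This costs you no extra computation. On the outer block $\ker\Nc=\mathrm{im}\,\Nc$, and $\rho_z(x_0)$ kills the other blocks. So your condition $\rho_z(x_0)U\subset U$ is equivalent to the paper's $\Nc\rho_z(x_0)\Nc=0$. In fact, at this $z$ one finds $\Nc\rho_z(x_0)=-\rho_z(x_0)\Nc$, which gives invariance of both $\mathrm{im}\,\Nc$ and $\ker\Nc$ at once. The paper's route is shorter; yours actually proves the stated claim for all of $\k_z$.

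Several details in your Step 2 need correcting, though none affects the strategy.

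First, with $\mu=\la$ the outer block of $\Nc=\Ic-\la$ is $\bigl(\begin{smallmatrix}\la 1_m & a\si_m\\ -\frac{\la^2}{a}\si_m & -\la 1_m\end{smallmatrix}\bigr)$. You dropped the diagonal. This block is a direct summand and is nilpotent on its own, not ``only together with the inner part''.

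Second, in your normalization $\rho_z(x_0)=zF+\frac{c_0}{z}E$, the resulting condition is $z^2=c_0\la^2/a^2$ (up to the normalization sign), not $z^2=\pm c_0a/\la$.

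Third, $y_{\al_0}=0$, because $\al_0\in\Omega$ is $\tau$-fixed and isolated from $\hat\Pi_\bullet$, so $\tilde\al_0=\al_0$. Your stated justification is false. A diagonal element $h(e_{11}-e_{1'1'})$ with $h\neq 0$ sends $v_1-\frac{\la}{a}v_{1'}$ to $h(v_1+\frac{\la}{a}v_{1'})$, so it does not preserve $U$ and would violate $\Nc X\Nc=0$.
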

\begin{proof}
There are the following possible one step extensions of the left tail to the right:
$$
\begin{picture}(60,10)
\put(0,3){\circle{3}}
\put(1,1.5){\line(1,0){12}}
\put(1,4.4){\line(1,0){12}}
\put(10.5,1){$\scriptstyle >$}
 \put(18,3){\circle{3}}
\multiput(21,3)(4,0){3}{\line(1,0){2}}
    \put(30,0.5){$\scriptstyle\lozenge$}
\put(37,2){$\ldots$}
\end{picture}
\quad
\begin{picture}(63,10)
\put(0,3){\circle{3}}
\put(1,2){\line(1,0){12}}
\put(1,4){\line(1,0){12}}
\put(10.5,1){$\scriptstyle >$}
\put(16.5,1.5){\framebox(3,3)}
\put(20,3){\line(1,0){12}}
\put(33,3){\circle*{3}}
\put(40,2){$\ldots$}
\end{picture}
\quad
\begin{picture}(55,25)
    \put(13.5,3.2){\line(-1,1){10}}\put(13.5,3.2){\line(-1,-1){10}}
\put(2.,14){\circle{3}}\put(2.,-8){\circle*{3}}

\put(14,1.5){\framebox(3,3)}

\multiput(18,3)(4,0){3}{\line(1,0){2}}
\put(27,0.5){$\scriptstyle\lozenge$}
\put(34,2){$\ldots$}
\end{picture}
\quad
\begin{picture}(50,25)
\put(29,3){\circle*{3}}
 \put(16,3){\line(1,0){12}}
\put(15,3){\circle{3}}
    \put(13.5,3.2){\line(-1,1){10}}\put(13.5,3.2){\line(-1,-1){10}}
\put(2.,14){\circle{3}}\put(2.,-8){\circle*{3}}
\put(36,2){$\ldots$}
\end{picture}
\quad
$$
The piece
$
\begin{picture}(20,25)
\multiput(1,3)(4,0){3}{\line(1,0){2}}
\put(10,0.5){$\scriptstyle\lozenge$}
\end{picture}
$
stands either for
$
\begin{picture}(20,10)
\put(1,1.5){\line(1,0){12}}
\put(1,4.4){\line(1,0){12}}
\put(10.5,1){$\scriptstyle >$}
 \put(16.5,1.5){\framebox(3,3)}
\end{picture}
$
in the case of empty $S_\s$ or  for
$
\begin{picture}(20,10)
 \put(1,3){\line(1,0){12}}
\put(15,3){\circle{3}}
\end{picture}
$
otherwise.
The rest of the diagram (designated by the dots) is immaterial because the relative generators commute with $x_{\al_0}$.

 It is sufficient to consider only two upper blocks of the matrix $\Nc$, which acts in the subspaces $\C^{m_1}\oplus \C^{m_2}\oplus \C^{m_2}\oplus\C^{m_1}\subset V$:
$$
\Nc=\left(
\begin{array}{ccccc}
\la 1_{m_1}0& 0 & 0&0&a_1\si_{m_1}\\
0 & \la 1_{m_2}& 0&a_2\si_{m_2}&0 \\
0 & 0 &\Nc''&0&0\\
0 &- \frac{\la^2}{a_2}\si_{m_2} & 0&-\la 1_{m_1}&0\\
- \frac{\la^2}{a_1}\si_{m_1} & 0 & 0&0&-\la 1_{m_2}\\
\end{array}
\right).
$$
The dimensions $m_i$ and the blocks $\si_i$ are defined as in (\ref{transition-bridge}).
All four  different combinations are possible. The scalars $a_1,a_2$ are determined by the set of mixture parameters of $\k^\flat$.

Choose the mixed affine generator as
$$
x_0=ze_{1',1}+\frac{c_0^2}{z}e_{1,1'}, \quad x_0=z(e_{2',1}-e_{1',2})+\frac{c_0^2}{z}(e_{1,2'}-e_{2,1'})
$$
for, respectively, the $\Cg$- and $\Dg$-shapes of the left tail.

With the choice $z=\pm \frac{c_0 \la}{a_1}$ of the loop parameter,
$
[\Nc,[\Nc,x_0]]=0.
$
Since $[\Nc,[\Nc,\g]]\not =0$, this proves that $\k_z$ is proper in $\g$. Therefore, $\k$ is proper in $\hat \g$.
\end{proof}
\subsection{Adjoint invariants for waif diagrams}
\label{Sec_waif_inv}
To construct  an invariant matrix in $\End(V_z)$ for a waif Satake diagram $S$ studied in the previous section, we have to lift our minimalist restriction to
the toric parametrization of $\k$.
As shown in the previous section, the left end sub-diagrams in such $S$  are of the following three types:
\be
\label{waif left ends}
\begin{picture}(30,10)
\put(3,3){\circle{3}}
\put(0,9){$\scriptstyle \al_0$}
\end{picture}
\quad
\begin{picture}(40,25)
    \put(13.5,3.2){\line(-1,1){10}}\put(13.5,3.2){\line(-1,-1){10}}
\put(2.,14){\circle{3}}\put(2.,-8){\circle*{3}}
\put(14,1.5){\framebox(3,3)}
\put(-1,20){$\scriptstyle \al_0$}
\end{picture}
\quad
\begin{picture}(45,25)
\put(29,3){\circle*{3}}
 \put(16,3){\line(1,0){12}}
\put(15,3){\circle{3}}
    \put(13.5,3.2){\line(-1,1){10}}\put(13.5,3.2){\line(-1,-1){10}}
\put(2.,14){\circle{3}}\put(2.,-8){\circle*{3}}
\put(-1,20){$\scriptstyle \al_0$}
\end{picture}
\quad
\ee
In all of three, the affine root $\al_0$ is $\tau$-fixed and isolated from $\hat \Pi_\bullet$.
Therefore $\al_0\in \Omega$.
Let $\k^\flat$ be a subalgebra from the toric family
defined by the sub-diagram $S^\flat=S\backslash \{\al_0\}$ and let $\k$ be its extension by the  generator
\be
\label{affine-affine}
x_0=e_0+c_0 f_0+\grave{c}_0 h_0.
\ee
\begin{propn}
The matrix (\ref{I-osp-block}) with $\mu=\la$  is $\k$-invariant provided  $c_0=-\grave{c}_0^2$ and
$z = \frac{\grave{c}_0}{ \lambda  a_\ell}$ in  $x_0$.
\end{propn}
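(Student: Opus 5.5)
Since $\k$ is generated by $\l$, by the mixed generators $x_\al$ with $\al\in\hat\Pi_\circ\setminus\{\al_0\}$, by the Cartan elements $y_\al$, and by the new affine generator (\ref{affine-affine}), I will check invariance generator by generator. All generators except $x_0$ belong to $\k^\flat$. For them the matrix $\Ic$ of the form (\ref{I-osp-block}) with $\mu=\la$ is already invariant: it is the standard $\k^\flat$-invariant built inductively in Section \ref{Sec_standard}, and since $S^\flat$ is the diagram obtained by removing $\al_0$, the only eigenvalue constraint comes from the right end. This constraint is $\mu=\la$, as observed in Section \ref{Sec_Waif}. Because $\al_0$ is $\tau$-fixed, we have $y_0=0$ modulo $\l$, so it imposes nothing. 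It therefore remains to verify $[\rho_z(x_0),\Ic]=0$ for the three left ends (\ref{waif left ends}).

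The element $x_0$ commutes with all simple root vectors of $S^\flat$ not adjacent to $\al_0$. Its image $\rho_z(x_0)$ is supported on the span of the first $m_1$ basis vectors and their primed partners. So, as in the proof of the previous proposition, the computation localizes to the corner blocks of (\ref{I-osp-block}), namely $(\mu+\la)1_{m_1}$, $a_\ell\si_{m_1}$, $-\frac{\mu\la}{a_\ell}\si_{m_1}$ and the zero block. Here $m_1=1$ for the $\Cg$-end and $m_1=2$ for the two $\Dg$-ends. Using Section \ref{Sec_Basic Rep}, I write
$$\rho_z(x_0)=z\,E_{-}+\frac{c_0}{z}E_{+}+\grave c_0 H,$$
where $E_-=\rho_z(e_0)/z$ is the block $e_{1',1}$ (respectively $e_{2',1}-e_{1',2}$), $E_+$ is the corresponding upper-right block $e_{1,1'}$ (respectively $e_{1,2'}-e_{2,1'}$), and $H=\rho_z(h_0)$ is the diagonal element $-(h_\xi)$ restricted to this corner, i.e. $\pm(e_{11}-e_{1'1'})$-type blocks.

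In this $2\times2$ block form (over blocks of size $m_1$), $\Ic$ reads $\begin{pmatrix}2\la & a\si\\ -\frac{\la^2}{a}\si & 0\end{pmatrix}$ with $a=a_\ell$. The element $\rho_z(x_0)$ reads $\begin{pmatrix}\grave c_0 h & \frac{c_0}{z}\si'\\ z\si'' & -\grave c_0 h\end{pmatrix}$ with explicit scalar or sign matrices. Equating the commutator to zero gives three scalar equations. The off-diagonal blocks give the relation between $z$, $\grave c_0$, $\la$ and $a$. The diagonal blocks give $\frac{c_0}{z}\cdot(-\frac{\la^2}{a})$ balanced against $z a$ plus a cross term. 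Eliminating should give exactly $z=\grave c_0/(\la a_\ell)$ and $c_0=-\grave c_0^2$. Alternatively, I can observe that $\Ic-\la$ squares to zero in this corner and that $\rho_z(x_0)$ with these parameters is a polynomial in $\Ic-\la$ (a nilpotent element plus its image), which makes the commutation immediate.

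The main obstacle is bookkeeping of signs. These come from the super-transposition conventions in (\ref{roots-e})--(\ref{roots-f}), the signature $\eps_1$, the matrices $\si_2=\mathrm{diag}(1,-1)$, and the grading in the $\Dg$-mixed ends. I expect these signs to conspire so that the same formulas hold uniformly in all three cases. I would first do the $\Cg$-end, where $m_1=1$ and the check is a $2\times2$ calculation. Then I would reduce the two $\Dg$-ends to it by noting that, with $\si_2$ and the chosen $\rho_z(x_0)$, the $4\times4$ computation splits into two copies of the $2\times2$ one up to a common sign.
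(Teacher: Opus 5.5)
Your route is the paper's. The generators other than $x_0$ lie in $\k^\flat$, and $y_0=0$ since $\al_0\in\Omega$. Moreover, $\rho_z(x_0)$ is supported on $\Span(v_1,v_{1'})$ (resp.\ $\Span(v_1,v_2,v_{2'},v_{1'})$), and $\Ic$ preserves this subspace together with its complement. So everything reduces to a direct computation on that corner, and the $\Dg$ case splits into two copies of the $\Cg$ one since $\si_2$ is diagonal. All of that is fine.

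What fails is the step where you predict, without computing, that elimination gives exactly $z=\grave c_0/(\la a_\ell)$. Take $\rho_z(e_0)=ze_{1',1}$, $\rho_z(f_0)=\frac{1}{z}e_{1,1'}$, $\rho_z(h_0)=-h_\xi=e_{1',1'}-e_{1,1}$ and $a=a_\ell$. In the basis $(v_1,v_{1'})$ one gets
\[
\Bigl[\begin{pmatrix}2\la & a\\ -\frac{\la^2}{a} & 0\end{pmatrix},\begin{pmatrix}-\grave c_0 & \frac{c_0}{z}\\ z & \grave c_0\end{pmatrix}\Bigr]=\begin{pmatrix} az+\frac{\la^2 c_0}{az} & \frac{2\la c_0}{z}+2a\grave c_0\\ \frac{2\la^2\grave c_0}{a}-2\la z & -az-\frac{\la^2 c_0}{az}\end{pmatrix}.
\]
This vanishes if and only if $c_0=-\grave c_0^2$ and $z=\la\grave c_0/a_\ell$.

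The value $\grave c_0/(\la a_\ell)$ in the statement is therefore a misprint. A scaling argument confirms this: rescaling $(\la,a_\ell)\mapsto(t\la,ta_\ell)$ multiplies the corner of $\Ic$ by $t$ and leaves its commutant unchanged, so $z$ can only depend on $\la/a_\ell$. Your own alternative remark confirms it too. On the corner, $\rho_z(x_0)=-\frac{\grave c_0}{\la}(\Ic-\la)$ holds exactly when $c_0=-\grave c_0^2$ and $z=\la\grave c_0/a_\ell$. For the $\Dg$ ends the same holds blockwise, with the sign of $z$ depending on how $e_0,f_0$ are normalized.

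So carry out the calculation rather than asserting that it reproduces the printed formula. Done explicitly, your argument proves the proposition with $z=\pm\la\grave c_0/a_\ell$ and $c_0=-\grave c_0^2$. The proportionality to the nilpotent $\Ic-\la$ is also a cleaner presentation than the paper's bare "direct calculation".
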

\begin{proof}
The   standard $\k^\flat$-invariant has a form (\ref{I-osp-block}) with $\mu=\la$.
A direct calculation reduced to   $\Span(v_1,v_{1'})$ for the $\Cg$-tail  and
in $\Span(v_1,v_2,v_{2'},v_{1'})$ for the $\Dg$-tail shows that this matrix also commutes with $x_0$.
\end{proof}
\section{Orthosymplectic  Satake diagrams, flipped tails}
\label{Sec_OSP-Tails flipped}
In this section we study Satake diagrams with  $\tau_\s\not =\id$.
They are built upon the Dynkin diagrams of shapes $\Dg-\Dg$ and $\Cg-\Cg$. The tails are either white or, in the case of $\Dg-\Dg$, are  mixed-coloured.
They are flipped by $\tau$. Without loss of generality we may think that $\al_0$ and $\al_n$ are white and $\tau(\al_0)=\al_n$.

Removing the nodes $\al_0,\al_n$ we obtain  the  Satake sub-diagram $S^\flat=\Sg(\al_1, \ldots, \al_{n-1})$ of shape $\Ag$.
By $(\mg,\k^\flat)$ we
denote the corresponding spherical pair, where $\mg\subset \g$ is the total general linear Lie superalgebra of rank $n-1$.

Big rhombi $\blacklozenge$ occurring in this section  denote finite dimensional general linear subalgebras in $\l$ of arbitrary polarization.
\subsection{Dualization of general linear  invariants}
Orthosymplectic affine Satake diagrams whose left and right tails are flipped by $\tau$ contain sub-diagrams of $\Ag$-shape
supported on $D_\s$. Let us look at how their adjoint invariants transform under $\tau$.

From now on $\g$ is an orthosymplectic Lie superalgebra of rank $n$ and with  natural module $V$ of even dimension dimension $\dim V=2n$.
The vector subspace
$W =\Span\{v_{i}\}_{i=1}^n\subset V$ is the natural module of the general linear subalgebra $\mg$. Denote by
  $W^\sharp$ the vector subspace spanned by $\{v_{i'}\}_{i=1}^n\subset V$.
The direct sum $W \oplus W^\sharp = V$ is an $\m$-module whose projection to $W^\sharp$
defines a representation $\pi\colon \mg\to \End(W^\sharp)$. It acts by the assignment
$$
e_{k,m}\mapsto -(-1)^{\bar m(\bar k+\bar m)} e_{m',k'}, \quad e_{m,k} =-(-1)^{\bar k(\bar k+\bar m)} e_{k',m'}, \quad k<m.
$$
Then $x\op \pi(x)\subset \g$ for all $x\in \mg$, as follows from (\ref{roots-e}) and (\ref{roots-f}).
The homomorphism can be written as a conjugation
$$
\pi(x)=-S  x^tS^{-1},
$$
where $S=\sum_{i=1}^{n} \eps_i e_{i',i}$ is the "lower" part of the invariant matrix $C$. It can be  regarded  as a linear map $S\colon W \to V^\sharp$.
With $S^{-1}=\sum_{i=1}^{n} \eps_i e_{i,i'}$, an even  matrix $A$ transforms to
$$
-A^\sharp   =\sum_{i=1}^{n} \eps_i e_{i',i}\sum_{l,k=1}^nA_{k,l}e_{l,k}\sum_{j=1}^{n} \eps_j e_{j,j'}
=\sum_{i,j=1}^n \eps_i \eps_j  A_{j,i}  e_{i',j'}=\sum_{i,j=1}^n   A_{j,i}  e_{i',j'}.
$$
The last equality holds for even $A$ because $\eps_i=\eps (-1)^{\bar i}$, $i=1,\ldots,n$.
Here $\eps$ measures the difference between the parity and the signature of $C$: $\eps_i=\eps (-1)^{\bar i}$.
In terms of $\End(V)$, the entries of  $A$ are transposed with respect to the skew diagonal.

Now suppose that $S^\flat=\Sg(\al_1,\ldots, \al_{n-1})$ is a Satake sub-diagram of type (\ref{GL-I}) and $\k^\flat$ its spherical subalgebra.
Let $A\in \End(W)$ be its standard adjoint invariant.
Then
\be
\Ic=
\label{flipped bridge}
\left(
\begin{array}{ccc}
  A+\nu 1_n & 0 \\
   0 & A^\sharp+\eta 1_n
\end{array}
\right)
\in \End(W\oplus W^\sharp), \quad \nu, \eta\in \C,
\ee
is invariant with respect to the subalgebra $\k^\flat\subset \g$.
With appropriate choice of parameters, it will be made $\k$-invariant.
\subsubsection{Black rank  $\rk \>\g-1$}
Consider the case with $\l=\mg$ first:
\be
\begin{picture}(55,20)

\put(50,10){$\scriptstyle \al_n$}\put(10,10){$\scriptstyle \al_0$}

\put(17.6,6.5){\vector(-3,-2){2}}\put(49.4,6.5){\vector(3,-2){2}}
\qbezier(17,6)(34.5,20)(50,6)

\put(30,0){$\blacklozenge$}

\put(36,2){\line(1,0){11}} \put(36,5){\line(1,0){11}}
\put(16,1.6){$\scriptstyle <$} \put(45,1.6){$\scriptstyle >$}
\put(31,2){\line(-1,0){11}}\put(31,5){\line(-1,0){11}}
 \put(15,3.5){\circle{3}} \put(52,3.5){\circle{3}}
\end{picture}
\quad\quad\quad
\begin{picture}(55,30)
\put(46,25){$\scriptstyle \al_n$}\put(9,25){$\scriptstyle \al_0$}

\put(23.2,28.6){\vector(-1,-1){2}}\put(41.8,28.6){\vector(1,-1){2}}
\qbezier(22,27.5)(32,38)(42,27.5)

 \put(19,23){$\scriptscriptstyle\lozenge$}\put(19,0){$\scriptscriptstyle\blacklozenge$}
\put(29,10){$\blacklozenge$}

\put(32.5,13.5){\line(-1,1){10}}\put(32.5,13.2){\line(-1,-1){10}}
\put(32.5,13.5){\line(1,1){10}}\put(32.5,13.2){\line(1,-1){10}}
\put(41.5,23){$\scriptscriptstyle\lozenge$}\put(41.5,0){$\scriptscriptstyle\blacklozenge$}

 \multiput(21,4)(0,5){4}{\line(0,1){3}}
\multiput(43.5,4)(0,5){4}{\line(0,1){3}}

\end{picture}
\ee
The subalgebra $\l$ for the right diagram is regular only if its polarization is symmetric. Therefore  all tail roots carry the same parity.
This is also true for the left diagram, because $\theta(\zt_1)=\zt_n$.
The dashed lines indicate double links
in the case when the tail roots are odd and no link otherwise.
The black subalgebra has an  even polarization.

The subalgebra $\k^\flat$ coincides with $\l$.
It is easy to see  that  $\rho_z(e_{\al_0})\propto \rho_z(f_{\tilde \al_n})$ and $\rho_z(e_{\al_n})\propto \rho_z(f_{\tilde \al_0})$.
Therefore the loop parameter $z$ can be taken such that $x_0$ and $x_n$ vanish on $V_z$.  For instance, one can normalize the  root vectors  so
that   $z=c_0=\frac{1}{c_n}$. The weights $\al_0$ and $\tilde \al_n$ coincide modulo $\C \dt$.
Then, with $A=\la 1_n$, the diagonal matrix  $\Ic=(\la+\nu)\sum_{i=1}^{n}e_{ii}+(\la+\eta)\sum_{i=}^{n}e_{i',i'}$ is a degeneration of (\ref{flipped bridge}).
It  is $\k^\flat$- and therefore $\hat \g$-invariant.

\subsection{$\Dg-\Dg$-shape}
Diagrams of this shape give rise to  $\l$ of rank $n-2k-1$, with positive integer $k$. Zero $\rk  \>\l$ is  also admissible.  We separately study
the case of rank $\rk  \>\l = n-3\geqslant 0$ first.  In this situation  $\Sg(\al_0)$ equals the entire diagram, and  the tails are not separable.

\subsubsection{Black rank $\rk\>\g-3$}
This corresponds to the Satake diagram
$$
\begin{picture}(55,30)
\put(46,25){$\scriptstyle \al_n$}\put(10,25){$\scriptstyle \al_0$}

\put(23.2,28.6){\vector(-1,-1){2}}\put(41.8,28.6){\vector(1,-1){2}}
\qbezier(22,27.5)(32,38)(42,27.5)

 \put(19,23){$\scriptscriptstyle\lozenge$}\put(19,0){$\scriptscriptstyle\lozenge$}
\put(29,10){$\blacklozenge$}

\put(32.5,13.5){\line(-1,1){10}}\put(32.5,13.2){\line(-1,-1){10}}
\put(32.5,13.5){\line(1,1){10}}\put(32.5,13.2){\line(1,-1){10}}
\put(41.5,23){$\scriptscriptstyle\lozenge$}\put(41.5,0){$\scriptscriptstyle\lozenge$}

\put(23.2,-1.6){\vector(-1,1){2}}\put(41.8,-1.6){\vector(1,1){2}}
\qbezier(22,-0.5)(32,-9)(42,-0.5)

\multiput(21,4)(0,5){4}{\line(0,1){3}}
\multiput(43.5,4)(0,5){4}{\line(0,1){3}}
\end{picture}
$$
Left tail roots carry the same parity as well as right. The parities of the left and right tailss coincide if the
polarization of $\l$ is even (when $\bar 2=\overline{n-1}$) and different otherwise.
In all cases, the equality $\bar 1=\overline{n}$  follows from consideration of the Satake sub-diagram $S^\flat=\langle \al_1,\dots, \al_{n-1}\rangle$.
The mixed generators are represented by
\be
x_0&\mapsto &z \bigl(-\eps_2(-1)^{\bar 1(\bar 1+\bar 2)}e_{2',1}+e_{1',2}\bigr)-c_0 \bigl(-\eps_{1}(-1)^{\bar 2(\bar 2+\bar n)}e_{n',2}+e_{2',n}\bigr), \quad
\nn\\
x_n&\mapsto&
 \bigl(-\eps_{n-1} (-1)^{\overline{n-1}(\overline{n-1}+\bar n) }e_{n-1,n'}+e_{n,(n-1)'}\bigr)-\frac{c_n}{z} \bigl(-\eps_1 (-1)^{\bar 1(\bar 1+\overline{n-1})} e_{1,(n-1)'}+e_{n-1,1'}\bigr),
\nn\\
y_0&\mapsto &
-\eps_n(e_{11} +e_{n,n})+\eps_1(e_{1',1'}
+e_{n',n'})\mod \l.
\nn
\ee
The $\k^\flat$-invariant matrix $\Ic$ in  (\ref{flipped bridge}) comprises the blocks
$$
A=
\left(
\begin{array}{cccccc}
 \mu+\la & 0& a \\
  0 & \la 1_{n-2}&0 \\
  -\frac{\mu\la}{a}&  0&0
\end{array}
\right),
\quad
 A^\sharp=
\left(
\begin{array}{cccccc}
 0 & 0& a \\
  0 & \la 1_{n-2}&0 \\
  -\frac{ \mu\la}{a}&  0&\mu+\la
\end{array}
\right),
\quad
$$
The parameters of the matrices are related with the mixture parameters of  $\k^\flat$.
The matrix $\Ic$ is also $x_0$-and $x_{n}$-invariant
if
$$
c_0\la=(-1)^{\bar 1+\bar 2+\bar 1\bar 2 + \overline{n-1}\>\bar n} c_n\mu, \quad z=(-1)^{\bar 1(\bar 1+\bar 2)}\frac{\eps_1c_0 \la }{a}.
$$
With this relation between $c_0$ and $c_n$, the subalgebra $\k$ is proper in $\hat \g$.
\subsubsection{Black rank $\geqslant \rk \>\g-5$}
Now we study the diagram of shape $\Dg-\Dg$ whose black block is separated from the white tails
In particular, it can be empty:
$$
\begin{picture}(100,50)

 \put(9,43){$\scriptscriptstyle\lozenge$}\put(9,20){$\scriptscriptstyle\lozenge$}
\multiput(11,24)(0,5){4}{\line(0,1){3}}
\put(22.5,33.5){\line(-1,1){10}}\put(22.5,33.2){\line(-1,-1){10}}
\put(22,31.5){$\scriptscriptstyle\lozenge$}
 \put(25.5,33){\line(1,0){5}}
 \put(33,32.5){$\ldots$}
  \put(49,33){\line(1,0){5}}

\put(53,31.5){$\scriptscriptstyle\lozenge$}
\put(57,33){\line(1,0){14}}
\put(70,30){$\blacklozenge$}
\put(88,31.5){$\scriptscriptstyle\lozenge$}
\put(76,33){\line(1,0){13}}
 \put(91.5,33){\line(1,0){5}}
 \put(99,32.5){$\ldots$}
  \put(114,33){\line(1,0){5}}

 \put(118.5,31.5){$\scriptscriptstyle\lozenge$}

\put(122.5,33.5){\line(1,1){10}}\put(122.5,33.2){\line(1,-1){10}}

 \put(131,43){$\scriptscriptstyle\lozenge$}\put(131,20){$\scriptscriptstyle\lozenge$}
 \multiput(133,24)(0,5){4}{\line(0,1){3}}

\put(13.6,48.2){\vector(-3,-1){2}}\put(130.4,48.2){\vector(3,-1){2}}
\qbezier(13,48)(72,68)(131,48)
\put(26.6,36.2){\vector(-3,-1){2}}\put(118.4,36.2){\vector(3,-1){2}}
\qbezier(26,36)(72,55)(119,36)
\put(57.6,29.4){\vector(-1,1){2}}\put(87.4,29.4){\vector(1,1){2}}
\qbezier(57,30)(72,12)(88,30)
\put(13.6,18.8){\vector(-3,1){2}}\put(130.4,18.8){\vector(3,1){2}}
\qbezier(13,19)(72,1)(131,19)
\put(135,45){$\scriptstyle \al_n$}\put(0,45){$\scriptstyle \al_0$}

\end{picture}
$$
We fix the mixed operators $x_0$ and $x_n$ as
\be
x_0&\mapsto &z \bigl(-\eps_2(-1)^{\bar 1(\bar 1+\bar 2)}e_{2',1}+e_{1',2}\bigr)-c_0 \bigl(-\eps_{n}(-1)^{\overline{n-1}(\overline{n-1}+\bar n)}e_{n',n-1}+e_{(n-1)',n}\bigr), \quad
\nn\\
x_n&\mapsto&
 \bigl(-\eps_{n-1} (-1)^{\overline{n-1}(\overline{n-1}+\bar n) }e_{n-1,n'}+e_{n,(n-1)'}\bigr)-\frac{c_n}{z} \bigl(-\eps_1 (-1)^{\bar 1(\bar 1+\bar 2)} e_{1,2'}+e_{2,1'})\bigr).
 \nn
\ee
Notice that
$\bar 1=\bar n, \quad \bar 2=\overline{n-1}$. Then we simplify them to
\be
x_0&\mapsto &z \bigl(-\eps_2(-1)^{\bar 1(\bar 1+\bar 2)}e_{2',1}+e_{1',2}\bigr)-c_0 \bigl(-eps_{1}(-1)^{\overline{2}(\overline{2}+\bar 1)}e_{n',n-1}+e_{(n-1)',n}\bigr), \quad
\nn\\
x_n&\mapsto&
 \bigl(-\eps_{2} (-1)^{\overline{2}(\overline{2}+\bar 1) }e_{n-1,n'}+e_{n,(n-1)'}\bigr)-\frac{c_n}{z} \bigl(-\eps_1 (-1)^{\bar 1(\bar 1+\bar 2)} e_{1,2'}+e_{2,1'})\bigr).
\nn
\ee
The additional Cartan generator is
\be
 y_0&\mapsto &
-\eps_1(e_{11} +e_{n,n})+\eps_1(e_{1',1'}
+e_{n',n'})-\eps_{2}(e_{2,2}+e_{n-1,n-1})+\eps_{2}(e_{(n-1)',(n-1)'}+e_{2',2'}).
\nn
\ee
Let $\k^\flat$ be the subalgebra in $\k$ restriced to the subdiagram $\Sg(\al_1,\ldots, \al_{n-1})$.
The matrix blocks constituting a $\k^\flat$-invariant $\Ic$ in (\ref{flipped bridge}) are
$$
A=
\left(
\begin{array}{cccccccc}
 \mu+\la & 0&0&0 &a_1 \\
 0& \mu+\la & 0&a_2& 0& \\
  0 & 0 &B&0&0 \\
  0&-\frac{\mu\la}{a_2}&  0&0&0\\
 -\frac{\mu\la}{a_1}&  0&0&0&0
\end{array}
\right),
\quad
A^\sharp=
\left(
\begin{array}{cccccccc}
 0 & 0&0&0 &a_1 \\
 0& 0 & 0&a_2& 0& \\
  0 & 0 &B^\sharp&0&0 \\
  0&-\frac{\mu\la}{a_2}&  0&\mu+\la&0\\
 -\frac{\mu\la}{a_1}&  0&0&0&\mu+\la
\end{array}
\right),
\quad
$$
where $B^\sharp$ and $B^\sharp$ are square  matrices of size $n-4$.
The scalars in $\Ic$ are related to the mixture parameters of  $\k^\flat$.
The matrix $\Ic$ is $\k$-invariant if and only if
$$
z=-\eps_1 (-1)^{\bar 1(\bar 1+\bar 2)}\frac{c_0\mu \lambda}{ a_1 a_2}, \quad c_n=(-1)^{\bar 1+\bar 2}c_0.
$$
Then the subalgebra $\k$ is proper.
Remark that $\mu=-\la$  if $\hat \Pi_\bullet=\varnothing$, cf. Section \ref{Sec_ad-gl-in}.
\subsection{$\Cg-\Cg$-shape}
Finally, suppose that the black block of the diagram is isolated from the tails of shape $\Cg$.
\be
\label{C-C-TAIL}
\begin{picture}(140,30)

\put(4,10){$\scriptstyle \al_0$}\put(140,10){$\scriptstyle \al_n$}

\put(9,3.5){\circle{3}}
\put(27,3.5){\line(1,0){5}}
 \put(34,3){$\ldots$}
  \put(49,3.5){\line(1,0){5}}

\put(56,3.5){\circle{3}}

\put(25,3.5){\circle{3}}

\put(18,1.6){$\scriptstyle >$}
\put(20,2){\line(-1,0){11}}\put(20,5){\line(-1,0){11}}

\put(72,3.5){\line(-1,0){14}}
\put(70,0){$\blacklozenge$}
\put(76,3.5){\line(1,0){14}}

 \put(92,3.5){\circle{3}}
 \put(93.5,3.5){\line(1,0){5}}
 \put(101,3){$\ldots$}
  \put(116,3.5){\line(1,0){5}}
\put(123,3.5){\circle{3}}

\put(128,2){\line(1,0){11}} \put(128,5){\line(1,0){11}}
\put(124,1.6){$\scriptstyle <$}
\put(140,3.5){\circle{3}}

\put(57.8,6.8){\vector(-3,-2){2}}\put(89.2,6.8){\vector(3,-2){2}}
\qbezier(57,6)(74.5,20)(90,6)
\put(27.6,6.6){\vector(-3,-2){2}}\put(120.4,6.6){\vector(3,-2){2}}
\qbezier(27,6)(74.5,40)(121,6)
\put(11.8,6.8){\vector(-1,-1){2}}\put(137.2,6.8){\vector(1,-1){2}}
\qbezier(11,6)(74.5,60)(138,6)

\end{picture}
\ee
As in the previous section, we admit empty $\hat \Pi_\bullet$.
The matrix blocks  $A$ and $A^\sharp$ entering a $\k^\flat$-invariant $\Ic$ are
$$
A=
\left(
\begin{array}{cccccccc}
 \mu+\la & 0&a_1\\
  0 &B&0 \\
 -\frac{\mu\la}{a_1}&  0&0\\
  \end{array}
\right),
\quad
\tilde A^\sharp=
\left(
\begin{array}{cccccccc}
 0 & 0&a_1\\
  0 &B^\sharp&0 \\
 -\frac{\mu\la}{a_1}&  0&\mu+\la\\
  \end{array}
\right),
\quad
$$
where $B$ and $B^\sharp$ are square  matrices of size $n-2$.
Their parameters are related to the mixture parameters of $\k^\flat$ in the standard way.
With the mixed generators
$$
x_0=e_{1',1}+c_0 e_{n',n}, \quad x_n:=e_{n,n'}+\frac{c_n}{z}e_{1,1'},
$$
$$
y_0=e_{1',1'}+e_{n',n'}-(e_{1,1}+e_{n,n}),
$$
the matrix $\Ic$ is $\k$-invariant if and only if
$
z=-\frac{c_0\mu \lambda}{ a_1^2 }$, $c_n=c_0$.
Then $\k$ is proper in $\hat \g$.
Again,  if $\hat \Pi_\bullet=\varnothing$, then the eigenvalues of $A$ are not independent: $\mu=-\la$.
This completes our analysis.
\vspace{20pt}

\noindent
\underline{\large \bf Acknowledgement}

\vspace{10pt}
\noindent
This work is done at the Center of Pure Mathematics MIPT. It is financially supported by Russian Science Foundation grant 26-11-00115.

\vspace{20pt}
\noindent
\underline{\large \bf Data Availability}

\noindent
Data sharing not applicable to this article as no datasets were generated or analysed during the current study.

\vspace{10pt}
\subsection*{Declarations}

\underline{\large \bf Competing interests}

\vspace{10pt}
\noindent
The authors have no competing interests to declare that are relevant to the content of this article.

 \end{document}